\theoremstyle{plain}
\newtheorem{theorem}{Theorem}[section]
\newtheorem*{theorem*}{Theorem}
\newtheorem{lemma}[theorem]{Lemma}
\theoremstyle{definition}
\newtheorem{assumption}[theorem]{Assumption}
\theoremstyle{remark}
\newcounter{Ax}
\icmltitlerunning{Optimal Shrinkage for Distributed Second-Order Optimization}
\begin{document}

\twocolumn[
\icmltitle{Optimal Shrinkage for Distributed Second-Order Optimization}



\icmlsetsymbol{equal}{*}

\begin{icmlauthorlist}
\icmlauthor{Fangzhao Zhang}{sch}
\icmlauthor{Mert Pilanci}{sch}
\end{icmlauthorlist}

\icmlaffiliation{sch}{Department of Electrical Engineering, Stanford University}

\icmlcorrespondingauthor{Fangzhao Zhang}{zfzhao@stanford.edu}

\icmlkeywords{Machine Learning, ICML}

\vskip 0.3in
]



\printAffiliationsAndNotice{}  

\begin{abstract}
In this work, we address the problem of Hessian inversion bias in distributed second-order optimization algorithms. We introduce a novel shrinkage-based estimator for the resolvent of gram matrices which is asymptotically unbiased, and characterize its non-asymptotic convergence rate in the isotropic case. We apply this estimator to bias correction of Newton steps in distributed second-order optimization algorithms, as well as randomized sketching based methods. We examine the bias present in the naive averaging-based distributed Newton's method using analytical expressions and contrast it with our proposed bias-free approach. Our approach leads to significant improvements in convergence rate compared to standard baselines and recent proposals, as shown through experiments on both real and synthetic datasets.
\end{abstract}

\section{Introduction}\label{intro}

In a distributed setting, where multiple agents have access only to subsets of the entire dataset, accurate estimation of the Hessian inverse and Newton steps is crucial for the effective application of second-order optimization algorithms. A straightforward way for estimating Hessian inverse is to simply collect and average all local Hessian inverses, however, this is usually not accurate due to the existence of inversion bias, i.e., in general
\[\lim_{m\rightarrow \infty} \left\|\frac{1}{m}\sum_{i=1}^m H_i^{-1} -H^{-1}\right\|\neq 0\]
where $m$ represents the total number of agents, such as distributed workers, and $H_i$ is the local Hessian computed by worker $i$ (see Theorem \ref{thm2.6} for more details). Therefore, a naive averaging of local Hessian inverses leads to a biased estimator of the global Hessian inverse. As a result, Newton steps computed by averaging local Newton steps can be far from exact. 


Different ways to reduce the inversion bias mentioned above have already been studied by a line of prior work. A particularly related one is the determinantal averaging method proposed by \citet{dereziński2019distributed}. The authors show that $\sum_{i=1}^m \mbox{det}(H_i)H_i^{-1}/\sum_{i=1}^m \mbox{det}(H_i)$ serves as an unbiased estimator of the global Hessian inverse when the data is uniformly distributed to each agent. However, this method has shortcomings involving the overhead of computing local Hessian determinants, and potential numerical instabilities in computing  determinants of local Hessian when the data dimensions are large. 


In this work, we borrow tools from random matrix theory and study the problem of estimation of the covariance resolvent when the data is randomly distributed. A key observation is that the inverses of positive semidefinite Hessians typically have the form of a covariance resolvent $(\frac{1}{n} X^TD^2X+\lambda I)^{-1}$ where $\frac{1}{n} X^TD^2X$ is the empirical covariance of an appropriately chosen  matrix in which $D$ is diagonal and $X$ is a data matrix. We propose an asymptotically unbiased estimator of the covariance resolvents in the form of a shrinkage formula (Theorem \ref{thm2}) whose informal version is stated below, and we also characterize its non-asymptotic convergence rate (see Section \ref{con_rate}).
Specifically, under some weak assumptions on the data distribution, we have the following result:
\begin{theorem*}{(informal, see Theorem \ref{thm2} for the assumptions)} 

For a random data matrix $A\in \mathbb{R}^{n\times d}$, let $d_\lambda$ denote the effective dimension of the true covariance matrix $\Sigma_n$, and $\hat \Sigma_n$ denote the empirical covariance matrix. Then, we have
\[\lim_{\substack{n,d\rightarrow \infty \\ \frac{d}{n}\rightarrow y\in [0,1)}} \left\|\mathbb{E}\left[\left(\gamma\hat \Sigma_n+\lambda I\right)^{-1}\right]-(\Sigma_n+\lambda I)^{-1}\right\|=0,\]
where $\gamma=\frac{1}{1-\frac{d_\lambda}{n}}$.
\end{theorem*}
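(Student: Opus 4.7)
My plan is to combine a deterministic equivalent for the expected resolvent of $\hat\Sigma_n$ with an algebraic identification of the shrinkage factor $\gamma$. For step one, under the regularity and concentration conditions assumed in Theorem~\ref{thm2}, classical random matrix results (Silverstein, Bai--Silverstein, Rubio--Mestre) give that
\[\bigl\|\mathbb{E}[(\hat\Sigma_n + \mu I)^{-1}] - (\alpha_n(\mu)\Sigma_n + \mu I)^{-1}\bigr\| \xrightarrow[n,d\to\infty]{} 0,\]
where $\alpha_n(\mu)\in(0,1]$ is the unique positive scalar solving the self-consistent fixed point equation
\[\alpha_n(\mu) = \Bigl(1 + \tfrac{1}{n}\mathrm{tr}\bigl[\Sigma_n(\alpha_n(\mu)\Sigma_n + \mu I)^{-1}\bigr]\Bigr)^{-1}.\]
I would establish this through the standard leave-one-out argument: apply Sherman--Morrison to $(\hat\Sigma_n + \mu I)^{-1}$, use concentration of the quadratic forms $\tfrac{1}{n}a_i^\top R_{-i} a_i$ around $\tfrac{1}{n}\mathrm{tr}(\Sigma_n R_{-i})$ (where $R_{-i}$ is the resolvent after removing the $i$-th row), and take expectations.

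For step two, I would rewrite $(\gamma\hat\Sigma_n + \lambda I)^{-1} = \gamma^{-1}(\hat\Sigma_n + (\lambda/\gamma)I)^{-1}$ and apply the deterministic equivalent at $\mu = \lambda/\gamma$ to obtain
\[\mathbb{E}[(\gamma\hat\Sigma_n + \lambda I)^{-1}] \approx \bigl(\gamma\,\alpha_n(\lambda/\gamma)\Sigma_n + \lambda I\bigr)^{-1}.\]
It thus suffices to verify $\gamma\,\alpha_n(\lambda/\gamma) = 1$, i.e., $\alpha_n(\lambda/\gamma) = 1-d_\lambda/n = 1/\gamma$. Substituting this candidate, one observes the key identity
\[\alpha_n(\lambda/\gamma)\Sigma_n + (\lambda/\gamma)I \;=\; \gamma^{-1}(\Sigma_n + \lambda I),\]
so the trace term in the fixed point equation collapses to $\tfrac{\gamma}{n}\mathrm{tr}[\Sigma_n(\Sigma_n+\lambda I)^{-1}] = \gamma d_\lambda/n$, and self-consistency reduces to $\gamma = 1 + \gamma d_\lambda/n$, which holds by the definition $\gamma = 1/(1-d_\lambda/n)$ together with $d_\lambda := \mathrm{tr}[\Sigma_n(\Sigma_n + \lambda I)^{-1}]$.

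The main obstacle is the operator-norm control in step one, not the algebra in step two. Obtaining the quantitative non-asymptotic rate of Section~\ref{con_rate} requires both a concentration bound on $a_i^\top M a_i$ for arbitrary deterministic positive semidefinite $M$ and a careful perturbation analysis of the fixed point $\alpha_n(\mu)$. The fact that the map $\alpha \mapsto (1 + \tfrac{1}{n}\mathrm{tr}[\Sigma_n(\alpha\Sigma_n+\mu I)^{-1}])^{-1}$ is a contraction for $\mu>0$ is exactly what propagates the quadratic-form concentration to matrix-level convergence of $\mathbb{E}[(\gamma\hat\Sigma_n + \lambda I)^{-1}]$ to $(\Sigma_n + \lambda I)^{-1}$.
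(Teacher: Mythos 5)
Your proposal is correct and takes essentially the same route as the paper: both use a Marchenko--Pastur/Silverstein deterministic equivalent for $\mathbb{E}[(\hat\Sigma_n+\mu I)^{-1}]$, then identify the shrinkage factor by observing that the self-consistent equation is solved exactly by $\alpha=1-d_\lambda/n$ at $\mu=\lambda/\gamma$, so that $(\alpha\Sigma_n+\mu I)^{-1}=\gamma(\Sigma_n+\lambda I)^{-1}$ collapses to the target resolvent. The only difference is organizational: the paper first proves the result with the \emph{limiting} shrinkage $\overline{d_\lambda}=\lim d_\lambda^n/n$ (via Lemmas \ref{lemma-a1}--\ref{lemma-a3} establishing existence and uniqueness of the limiting fixed point) and then transfers to the finite-$n$ coefficient $d_\lambda^n/n$ with a resolvent-identity perturbation bound, whereas you work directly with the finite-$n$ fixed point $\alpha_n(\mu)$, which streamlines the bookkeeping but is substantively the same argument.
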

This result implies that a simple scaling of the Hessian by $\frac{1}{1-\frac{d_\lambda}{n}}$ removes the inversion bias, where $d_\lambda$ is the effective dimension of the covariance.
Since the Hessian inverse is related to covariance resolvents, this theorem can help reducing Hessian inversion bias in the large data regime. We study its application to distributed second-order optimization algorithms and randomized second-order optimization algorithms,  where
we observe a significant speedup in the convergence rate compared to baseline methods (see Figure \ref{fig1} below,
and more simulation results in Section \ref{simu} and Appendix \ref{append_simu}). 
\begin{figure}[h]
\begin{subfigure}{0.48\linewidth}
\includegraphics[width=\linewidth]{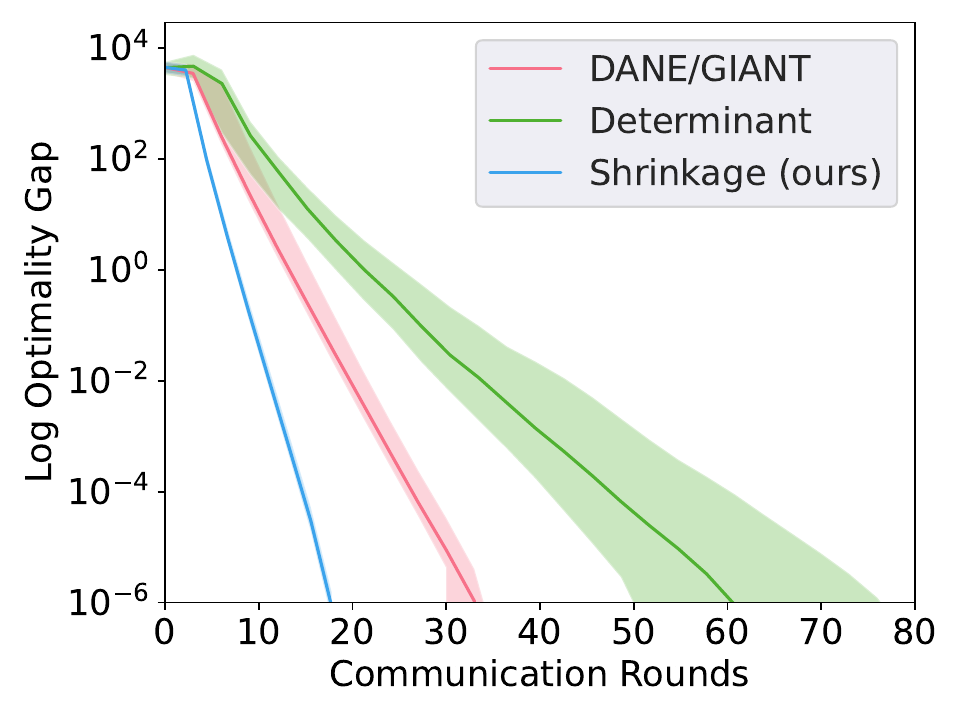}
\end{subfigure}
\hfill
\begin{subfigure}{0.48\linewidth}
\includegraphics[width=\linewidth]{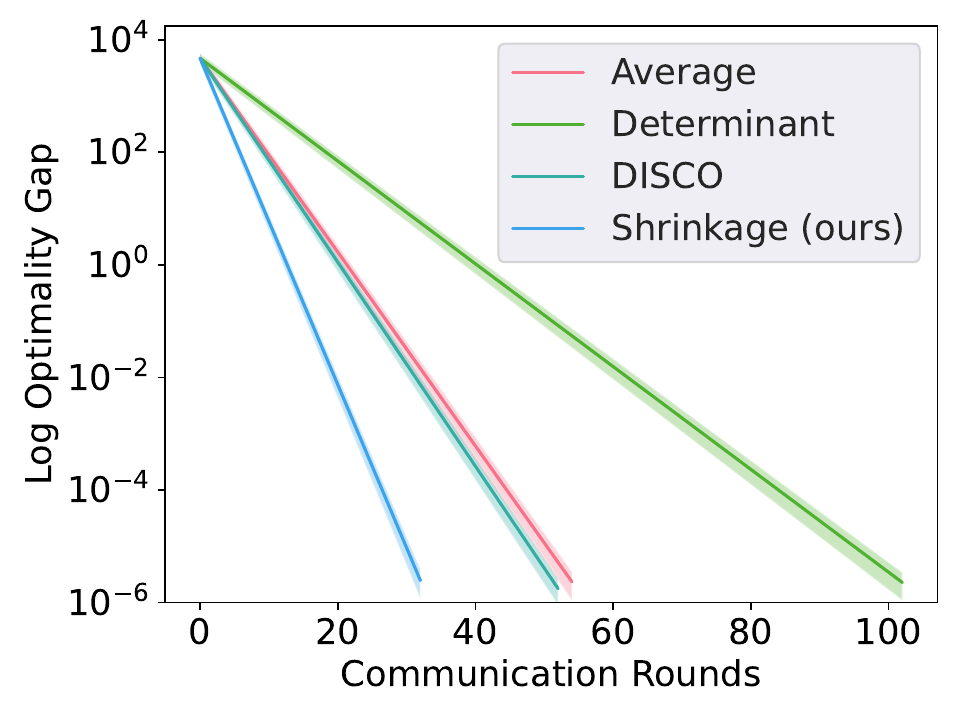}
\end{subfigure}
\caption{Synthetic data experiments on ridge regression. Total number of data $n=30000$, data dimension $d=150$, number of agents $m=200$, regularizer $\lambda=0.01$. The left plot shows the convergence of distributed Newton's method (Algorithm \ref{alg:newton}). The right plot shows the convergence of distributed inexact Newton's method 
 (Algorithm \ref{alg:pcg}). Step sizes are chosen via line search in all methods. See Section \ref{simu} for details.} \label{fig1}
\end{figure}

\vspace{-0.5cm}
\subsection{Prior Work}\label{prior-work}




Early work on distributed Newton-type methods including DANE  studied by \citet{shamir2013communication}, where the approximate Newton step is an average of local mirror descent steps. Later work such as AIDE \cite{reddi2016aide}, standing for accelerated inexact DANE,  solves an Nesterov accelerated version of DANE's local optimization problem inexactly to some degree of accuracy. Other works include COCOA \cite{ma2015adding} which solves a dual problem of logistic regression locally, DiSCO \cite{zhang2015disco} which considers inexact damped Newton's step solved with distributed preconditioned conjugate gradient method using the inverse of first agent's local Hessian as the preconditioning matrix, and GIANT \cite{wang2017giant} which uses the average of local Newton's step as the global one. More recent work by \citet{dobriban2018distributed} analyzes performance loss in one-shot weighted averaging and iterative averaging for linear regression, and \citet{dobriban2019wonder} also study one-shot weighted averaging for distributed ridge regression in high dimensions. 

While most of the work mentioned above does not address the Hessian inversion bias   directly, another line of work considers this issue. For example, the determinantal averaging method \cite{dereziński2019distributed}  states that one can bypass the inversion bias  as long as an unbiased estimator of Hessian matrix exists and computing determinants of local Hessian matrices is feasible. This method is unstable when data is of large dimension since local Hessian determinant computation is usually infeasible there. It also introduces computational overhead for computing local Hessian determinants.  \citet{zhang2012communication} proposed a bootstrap subsampling method to reduce bias in one-shot averaging that improves the approximated optimizer to some finite suboptimality. However, the improved  approximated optimizer can still be much worse compared to the true optimizer (shown in \cite{shamir2013communication}, Section 2).

Our work addresses bias correction in distributed optimization algorithms with analytical tools from random matrix theory. When data is independently and identically distributed, we introduce a shrinkage formula for estimating the resolvent of the covariance matrix of data which can serve as an inversion bias corrector in the large data regime. This bias correction method is more stable compared to the determinantal averaging method and achieves significantly better accuracy without any notable computational overhead. In the field of asymptotic random matrix theory, prior works studying similar shrinkage formulas exist only for statistical estimation of covariance and precision matrices. \citet{Ledoit2004AWE} is one among the early works studying linear shrinkage estimator of  large covariance matrices. Later work by Bodnar et al. \cite{bodnar2014covariance} studies  linear shrinkage estimator of large covariance matrices with almost sure smallest Frobenius loss. In \citet{bodnar2016directSE}, the authors studied linear shrinkage estimator for the precision matrix. \citet{bodnar2022advance} gives a comprehensive review of recent advancements in shrinkage-based high dimensional inference studies. 
These shrinkage-based methods have already been successfully applied to tests for weights for portfolios \cite{bodnar2019tests} and to robust adaptive beamforming \cite{xiao2018robust}. Our work focuses on  shrinkage-based estimation of the resolvent of covariance matrices, which is different from the shrinkage formula for both covariance and precision matrices, and we study its application to optimization algorithms.

Besides distributed second-order optimization, we find the shrinkage formula we studied is also useful for improving sketching methods. We note \citet{Dereziński2020debiasing} studied debiasing randomized  optimization algorithms with surrogate sketching, where a non-standard carefully chosen sketching matrix is used. Also, in \citet{bartan2022distributed}, the authors exploited random matrix theory for bias elimination in distributed randomized ridge regression. They showed when the covariance is isotropic, an asymptotically unbiased estimator can be obtained by tuning local regularizers. Unlike their approaches, our method works for general  covariance matrices and sketching matrices.




\vspace{-0.2cm}
\subsection{Contribution}
In this work, we propose an asymptotically unbiased shrinkage-based estimator of the resolvent of covariance matrices. Unlike most prior studies in the field of  large-dimensional random matrix theory which consider only the asymptotic settings, we also characterize the non-asymptotic convergence rate. Furthermore, we study the application of this shrinkage formula to distributed second-order optimization algorithms and randomized sketching methods. We carry out real data simulations where a significant convergence speedup is obtained compared to standard baselines.

To our best knowledge, there is no existing work on  shrinkage-based estimation of the resolvent of covariance matrices, and we are also the first to derive a closed-form formula for optimal shrinkage and apply it to optimization.

\vspace{-0.2cm}
\section{Main Theorems}\label{section2}


In this section, we establish our main theoretic results.  
A shrinkage-based asymptotically unbiased estimator of the resolvent of covariance matrices is studied with its convergence rate characterized and its variant in the small regularizer regime analyzed in Section \ref{main}. We then show that the commonly used averaging method has non-zero asymptotic bias and summarize different methods for estimating the resolvent of covariance matrices in Section \ref{avg-bias}.
\label{sec2.1}

We first introduce notations we use for stating our main theorems. We follow the classical  Kolmogorov asymptotics. Consider a sequence of problems $\mathcal{B}_n=(\Sigma, \hat\Sigma, x, d)_n$ with $n,d\rightarrow\infty,$ $d/n\rightarrow y\in[0,1).$ $\Sigma_n\in \mathbb{R}^{d\times d}$ is the true covariance and data distribution satisfies $ \mathbb{E}[x]=0, \mbox{Cov}(x)=\Sigma_n$. Empirical covariance is denoted as   $\hat\Sigma_n=\frac{1}{n}\sum_{i=1}^{n} x_ix_i^T$ where $x_i$'s are i.i.d. samples of $x$. Consider any $\lambda>0,\lambda\in\mathbb{R}$.   $d_\lambda(\Sigma)=\mathrm{tr}(\Sigma(\Sigma+\lambda I)^{-1})$ is the effective dimension of $\Sigma$ and  we define $d_\lambda^n=d_\lambda(\Sigma_n)$. The empirical spectral distribution (e.s.d.) of $\Sigma_n$ is $F_{\Sigma_n}(u) = d^{-1}\sum_{i=1}^{d} \mathbb{I}_{(\lambda_i\leq u)}$ where $\lambda_i$'s are eigenvalues of  $\Sigma_n$. We use $F_{\Sigma_n}(u)\rightarrow F_\Sigma(u)$ to indicate that the e.s.d. converges almost surely to a density $F_\Sigma(u)$. We further define $M=\sup_{|e|=1} \mathbb{E}(e^Tx)^4$, $\nu = \sup_{\|\Omega\|=1}\mathrm{Var}(x^T\Omega x/d)$.

We collect the main assumptions required below,
\begin{assumption}
\begin{align*}
\vspace{-5cm}
&\mbox{A1.}~F_{\Sigma_n}(u)\rightarrow F_\Sigma(u) \mbox{ almost for any }u\geq 0\\
&\mbox{A2.}~M<\infty, \nu\rightarrow 0\\
&\mbox{A3.}~\mbox{eigenvalues of each }\Sigma_n\mbox{ are in the interval }[\sigma_{\min},\sigma_{\max}]\\
&\quad~\mbox{ where }\sigma_{\min}>0\mbox{ and }\sigma_{\max}\mbox{ does not depend on }n
\end{align*}
\label{ass_1}
\vspace{-1cm}
\end{assumption}

\subsection{Asymptotically Unbiased Shrinkage Formula for the Resolvent of Covariance}\label{main}
We propose an asymptotically unbiased estimator of the resolvent of covariance  matrices in the form of a local shrinkage formula under notations defined at the beginning of this section, and we characterize an explicit convergence rate for the isotropic case in Section \ref{con_rate}.
\begin{theorem}
\label{thm2}
Under Assumption \ref{ass_1}, for any $\lambda>0$ and any data matrix with i.i.d. rows, assume additionally $d_\lambda^n<n$ for each $n,d$, then we have
\[\mathbb{E}\left[\left(\frac{1}{1- \frac{d_\lambda^n}{n}}\hat \Sigma_n+\lambda I\right)^{-1}\right]=(\Sigma_n+\lambda I)^{-1}+\Omega_0\]
where $\|\Omega_0\|\rightarrow 0$ as $n,d\rightarrow\infty$, $d/n\rightarrow y\in[0,1)$. \footnote{Here $\Omega_0$ depends on $n$ and we write $\Omega_0=\Omega_0(n)$ for notational simplicity. The same convention is used in later sections as well as in the appendix.}
\end{theorem}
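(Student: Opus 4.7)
The plan is to reduce the claim to a Marchenko--Pastur-style deterministic equivalent for the resolvent of the sample covariance, and then verify that the proposed scaling $\gamma_n := 1/(1-d_\lambda^n/n)$ is precisely the parameter that collapses the equivalent onto the population resolvent $(\Sigma_n+\lambda I)^{-1}$. First I would reparameterize: setting $\tilde\lambda := \lambda/\gamma_n = \lambda(1-d_\lambda^n/n) > 0$, the algebraic identity $(\gamma_n\hat\Sigma_n+\lambda I)^{-1} = \gamma_n^{-1}(\hat\Sigma_n+\tilde\lambda I)^{-1}$ reduces the problem to pinning down $\mathbb{E}[(\hat\Sigma_n+\tilde\lambda I)^{-1}]$ in operator norm.

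Second, under Assumption~\ref{ass_1} the classical random-matrix machinery yields a deterministic equivalent
\[\bigl\|\mathbb{E}[(\hat\Sigma_n+\tilde\lambda I)^{-1}] - (c_n\Sigma_n+\tilde\lambda I)^{-1}\bigr\|\xrightarrow[n,d\to\infty]{}0,\]
where the scalar $c_n\in(0,1]$ is characterized by the fixed-point equation
\[\frac{1-c_n}{c_n} = \frac{1}{n}\,\mathrm{tr}\bigl(\Sigma_n(c_n\Sigma_n+\tilde\lambda I)^{-1}\bigr).\]
Establishing this equivalent is the main technical step, and the one I expect to demand the most care. The route is a standard leave-one-out argument: decomposing $\hat\Sigma_n = \hat\Sigma_n^{(i)} + n^{-1}x_ix_i^T$ and applying the Sherman--Morrison identity extracts $x_i$ from the resolvent as $(\hat\Sigma_n+\tilde\lambda I)^{-1}x_i = (\hat\Sigma_n^{(i)}+\tilde\lambda I)^{-1}x_i/\bigl(1+n^{-1}x_i^T(\hat\Sigma_n^{(i)}+\tilde\lambda I)^{-1}x_i\bigr)$; the quadratic form in the denominator is then concentrated around its mean $\mathrm{tr}(\Sigma_n(\hat\Sigma_n^{(i)}+\tilde\lambda I)^{-1})$, with fluctuations controlled by the variance bound $\nu\to 0$ and the fourth-moment bound $M<\infty$ of A2. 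The eigenvalue bounds of A3 keep $(c\Sigma_n+\tilde\lambda I)^{-1}$ uniformly bounded in operator norm and make the induced self-consistent map a contraction on $(0,1]$, yielding existence, uniqueness, and stability of $c_n$; summing over $i$ and matching the resulting approximation against the identity $\mathbb{E}[(\hat\Sigma_n+\tilde\lambda I)^{-1}\hat\Sigma_n] = I - \tilde\lambda\,\mathbb{E}[(\hat\Sigma_n+\tilde\lambda I)^{-1}]$ produces the displayed fixed-point equation.

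Finally, I would verify by direct substitution that $c_n = 1/\gamma_n = 1 - d_\lambda^n/n$ solves the fixed-point equation: with this choice $c_n\Sigma_n+\tilde\lambda I = (1-d_\lambda^n/n)(\Sigma_n+\lambda I)$, so the right-hand side becomes
\[\frac{1}{n(1-d_\lambda^n/n)}\,\mathrm{tr}\bigl(\Sigma_n(\Sigma_n+\lambda I)^{-1}\bigr) = \frac{d_\lambda^n/n}{1-d_\lambda^n/n},\]
which coincides with $(1-c_n)/c_n$; uniqueness of the fixed point then identifies this as $c_n$. Plugging back, $\gamma_n^{-1}(c_n\Sigma_n+\tilde\lambda I)^{-1} = \gamma_n^{-1}\cdot\gamma_n(\Sigma_n+\lambda I)^{-1} = (\Sigma_n+\lambda I)^{-1}$, so the deterministic-equivalent error is precisely $\Omega_0$ and has vanishing operator norm as $n,d\to\infty$. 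The difficult part is establishing the deterministic equivalent quantitatively under just Assumption~\ref{ass_1}; the rest of the argument is algebraic bookkeeping driven by the cancellation $\gamma_n c_n = 1$.
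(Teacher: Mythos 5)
Your proposal is correct and follows essentially the same path as the paper's proof: establish a Marchenko--Pastur-type deterministic equivalent $\mathbb{E}[(\hat\Sigma_n+\tilde\lambda I)^{-1}]\approx(c\,\Sigma_n+\tilde\lambda I)^{-1}$ under Assumption~\ref{ass_1} (the paper imports this from Theorem~3.1 of the cited reference, which is the same leave-one-out/Sherman--Morrison machinery you sketch), then verify by direct substitution that the shrinkage constant collapses the fixed-point equation so that the equivalent equals $(\Sigma_n+\lambda I)^{-1}$. The only presentational differences are that the paper parameterizes by $z=-1/\tilde\lambda$ and works with $(I-z\hat\Sigma_n)^{-1}$, proves the result first for the limiting coefficient $\overline{d_\lambda}=\lim d_\lambda^n/n$ (Lemma~\ref{thm1}) and then replaces it with the finite-$n$ quantity via a resolvent identity, and defines the companion scalar through the trace expectation rather than directly as a fixed-point solution; none of these change the substance of the argument.
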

\begin{proof}
See Appendix \ref{thm2proof}.
\end{proof}
\label{constraint}
\vspace{-0.2cm}
Note that this result universally holds for a large class of random data matrices with i.i.d. rows. In Assumption \ref{ass_1}, we only require $M$  bounded and $\nu$ vanishing as $n,d$ tend to infinity. To see such constraints are quite mild, note requiring $M=\sup_{|e|=1} \mathbb{E} (e^Tx)^4$ staying bounded is essentially bounding the dependence of $x$'s components. When $x\sim \mathcal{N}(0,\Sigma)$, $M=3\|\Sigma\|^2.$ For $\nu=\sup_{\|\Omega\|=1}\mbox{Var}(x^T\Omega x/d)$, 
when components of $x$ are independent,  $\nu\leq M/d$, and when $x\sim \mathcal{N}(0,\Sigma),\nu\leq 2\|\Sigma\|^2/d$. For more discussions on these constraints, see \citet{multi_stats}, Chapter 3.1.

\subsubsection{Isotropic Convergence Rate} \label{con_rate}

To evaluate the convergence rate in Theorem \ref{thm2}, note if there is no constraint on how fast $d/n$ converges to $y$, then the convergence rate  can be arbitrarily bad. To characterize the convergence rate, here we require $d/n=y$ always holds. Then from the expression for $\Omega_0$  (see  inequality (\ref{convergence_bound2}) in Appendix \ref{thm2proof}), we need to find the convergence rate of the Stieltjes transform of the spectral distribution of gram matrices. Such task is in general hard if no constraint on the covariance matrices is imposed. Prior work analyzing such bounds exists for  covariance matrices with correlated Curie-Weiss entries \cite{fleermann2019high}, sparse covariance matrices \cite{erdHos2020random}, covariance matrices with independent but not necessarily identically distributed entries \cite{bai2010spectral}.

Here we focus on the isotropic covariance case where we can exploit isotropic local Marchenko-Pastur law to derive an explicit convergence rate for Theorem  \ref{thm2}.

\begin{theorem}
\label{thm2.4}
When $\Sigma_n=I$, under Assumption \ref{ass_1}, 
\[\|\Omega_0\|\in\mathcal{O}\left(\frac{1}{\sqrt{n}}+\sqrt{\nu}\right)\]
\end{theorem}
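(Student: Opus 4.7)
The plan is to reduce the claim to an isotropic local Marchenko--Pastur estimate and apply it to the explicit error expression for $\Omega_0$ obtained in the proof of Theorem~\ref{thm2}.

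First, I would specialize to $\Sigma_n=I$. There the effective dimension is $d_\lambda^n = d/(1+\lambda)$, so $\gamma_n := (1-d_\lambda^n/n)^{-1} = (1+\lambda)/(1+\lambda-y_n)$ with $y_n=d/n$. Writing the shrinkage resolvent as
\[\left(\gamma_n\hat\Sigma_n+\lambda I\right)^{-1}=\tfrac{1}{\gamma_n}\bigl(\hat\Sigma_n-z_n I\bigr)^{-1},\qquad z_n := -\lambda/\gamma_n<0,\]
the quantity to bound becomes $\bigl\|\gamma_n^{-1}\mathbb{E}[Q_n(z_n)] - (1+\lambda)^{-1}I\bigr\|$ for the resolvent $Q_n(z)=(\hat\Sigma_n-zI)^{-1}$. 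Because $\lambda>0$, the point $z_n$ stays at positive distance from the support of the Marchenko--Pastur law, which is crucial for all subsequent resolvent bounds.

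Second, I would invoke the isotropic local Marchenko--Pastur law. Under Assumption~\ref{ass_1} (i.i.d.\ rows with identity covariance, bounded $M$, and vanishing $\nu$), the expected resolvent admits a deterministic equivalent at rate
\[\Bigl\|\mathbb{E}\bigl[Q_n(z_n)\bigr] - m_{y_n}(z_n)\,I\Bigr\|_{\mathrm{op}}\;\leq\;C\bigl(n^{-1/2}+\sqrt{\nu}\bigr),\]
where $m_{y_n}$ is the Stieltjes transform of the MP distribution with aspect ratio $y_n$. The $n^{-1/2}$ piece is the classical concentration rate for $d^{-1}\mathrm{tr}\,Q_n(z_n)$ around its mean, obtained via a standard martingale/Schur-complement argument (cf.\ \citet{bai2010spectral}, Chap.~9), while the $\sqrt{\nu}$ piece arises from a Burkholder-type inequality applied to the quadratic forms $x^T\Omega x/d$ that close the self-consistent equation: these are precisely the quantities whose variance is controlled by $\nu$.

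Third, I would verify the algebraic cancellation that removes the leading term. The Marchenko--Pastur self-consistent equation, combined with the explicit choice $z_n=-\lambda/\gamma_n$ and $\gamma_n=(1-y_n/(1+\lambda))^{-1}$, implies $m_{y_n}(z_n)=\gamma_n/(1+\lambda)$; a sanity check at $y_n\to 0$ gives $m_{y_n}(z)\to (1-z)^{-1}$, which indeed reduces to $(1+\lambda)^{-1}$ at $z_n=-\lambda$. Dividing by $\gamma_n$ leaves precisely $(1+\lambda)^{-1}I=(\Sigma_n+\lambda I)^{-1}$, so only the $\mathcal{O}(n^{-1/2}+\sqrt{\nu})$ fluctuation term survives, giving the claimed bound on $\|\Omega_0\|$.

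The main obstacle, as I see it, is upgrading the local law to an \emph{expectation-level} operator-norm statement at the stated rate under the weak moment assumptions A1--A3. Most references phrase the isotropic local law as high-probability entrywise (or bilinear) control via stochastic domination; passing from there to $\|\mathbb{E}[Q_n(z_n)] - m_{y_n}(z_n)I\|_{\mathrm{op}}$ requires combining this with the uniform deterministic bound $\|Q_n(z_n)\|\leq|z_n|^{-1}$ for integrability, and exploiting that $z_n$ sits strictly in the spectral-gap region. The $\sqrt{\nu}$ rate, as opposed to $\nu$, is tight in this scheme: it is inherent in the second-moment control of the quadratic forms used to close the self-consistent equation and cannot be improved without strengthening A2.
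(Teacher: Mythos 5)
Your high-level strategy lines up with the paper's: both specialize to $\Sigma_n=I$, both exploit the exact cancellation $d_\lambda^n/n=y/(1+\lambda)$ so that the finite-$n$ shrinkage point $z_n$ agrees with the asymptotic fixed point (making the $\omega_n$ contribution in the paper's display (\ref{convergence_bound2}) vanish), and both invoke an isotropic local Marchenko--Pastur law. However, there is a genuine gap in the step you yourself flag as the main obstacle.

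You assert, as a black box, an \emph{operator-norm, expectation-level} isotropic local law
\(\bigl\|\mathbb{E}[Q_n(z_n)]-m_{y_n}(z_n)I\bigr\|\lesssim n^{-1/2}+\sqrt{\nu}\)
under Assumption~\ref{ass_1}. This is not available off the shelf: A2 only requires a uniform fourth-moment bound $M<\infty$ and vanishing $\nu$, while the stochastic-domination-based isotropic local laws you gesture at (Bloemendal et al.; Bai--Silverstein-type arguments) are proved under boundedness of all moments and give high-probability \emph{bilinear} control, from which passing to an operator-norm expectation bound with an explicit $\sqrt{\nu}$ term is nontrivial. You mention a truncation $\|Q_n(z_n)\|\le|z_n|^{-1}$ plus a Burkholder estimate, but you do not show this actually closes at the claimed rate under A1--A3; in particular, the $\sqrt{\nu}$ contribution does not come out of the usual local-law machinery.

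The paper sidesteps this entirely by decomposing the error differently. In Lemma~\ref{lemma-a1} it establishes a deterministic-equivalent identity
\((I-zs_0(z)\Sigma_n)\,\mathbb{E}[(I-z\hat\Sigma_n)^{-1}]=I+\Omega\)
with an \emph{explicit, elementary} operator-norm bound
\(\|\Omega\|\le \tfrac{Mz^2}{n}+\sqrt{5M}\,z^2\sqrt{\tfrac{M^2d^2z^2}{n^3}+\tfrac{d^2\nu}{n^2}},\)
which already delivers the $\sqrt{\nu}$ term without any local law. What remains is to control $|s_0(z)-s(z)|$, i.e.\ the distance of the finite-$n$ \emph{trace} of the resolvent from its limit; for this a trace-level statement suffices, and the paper imports only that much from the isotropic local law (Lemma~\ref{lemma-a4}, combined with the continuity-in-$z$ estimate of Lemma~\ref{lemma-a6} to move the evaluation point back to the real axis), giving the $n^{-1/2}$ piece. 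So the two ingredients -- the $\sqrt{\nu}$ from the self-consistent-equation expansion and the $n^{-1/2}$ from the trace local law -- are kept separate and each is obtained with tools matched to Assumption~\ref{ass_1}. If you want to salvage your route, you would need to supply a proof of the operator-norm expectation-level local law under A1--A3 at the stated rate; otherwise the cleaner path is the paper's decomposition via the identity $(I-zs_0(z)\Sigma_n)\mathbb{E}[(I-z\hat\Sigma_n)^{-1}]=I+\Omega$.
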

\begin{proof}
See Appendix \ref{thm2.4proof}.
\end{proof}
\vspace{-0.2cm}
\textbf{$\mbox{Remark.}$} For $x\sim\mathcal{N}(0,I)$, Assumption \ref{ass_1} holds and
$\|\Omega_0\|\in\mathcal{O}\left(\frac{1}{\sqrt{n}}\right)$

\subsubsection{Small Regularizer Regime}\label{small-regularizer}

In this subsection, we study the behavior of Theorem \ref{thm2} when the regularizer diminishes to zero, which results in a simpler local shrinkage coefficient requiring no estimation of any effective dimension.
\begin{theorem}
\label{thm2.5}
Under Assumption \ref{ass_1}, assume $d<n$ always holds. If  $~\Sigma_n$ is invertible and eigenvalues of $\hat\Sigma_n$ are bounded away from zero, then for $\epsilon>0$, 
\[ \mathbb{E}\left[\left(\frac{1}{1-\frac{d}{n}}\hat\Sigma_n+\epsilon I\right)^{-1}\right]=\Sigma_n^{-1} + \Omega_1\]
with $\|\Omega_1\|\rightarrow 0$ as $n,d\rightarrow\infty$ and $\epsilon\rightarrow 0$.  
\end{theorem}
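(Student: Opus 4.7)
The plan is to reduce Theorem \ref{thm2.5} directly to Theorem \ref{thm2} applied with $\lambda=\epsilon$, and then exploit the invertibility of $\Sigma_n$ to send $\epsilon\to 0$. Two ingredients drive the argument. First, A3 of Assumption \ref{ass_1} gives $\lambda_{\min}(\Sigma_n)\geq\sigma_{\min}>0$, so $(\Sigma_n+\epsilon I)^{-1}\to\Sigma_n^{-1}$. Second, the effective dimension $d_\epsilon^n=\sum_{i=1}^d \lambda_i(\Sigma_n)/(\lambda_i(\Sigma_n)+\epsilon)$ converges to the ambient dimension $d$ as $\epsilon\to 0$, so the shrinkage factor $\frac{1}{1-d_\epsilon^n/n}$ of Theorem \ref{thm2} collapses to the simpler $\frac{1}{1-d/n}$.

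First I would split the quantity of interest via the triangle inequality into $\|\Omega_1\|\leq T_1+T_2+T_3$, where
\[T_1=\left\|\mathbb{E}\left[\left(\tfrac{\hat\Sigma_n}{1-d/n}+\epsilon I\right)^{-1}\right]-\mathbb{E}\left[\left(\tfrac{\hat\Sigma_n}{1-d_\epsilon^n/n}+\epsilon I\right)^{-1}\right]\right\|,\]
\[T_2=\left\|\mathbb{E}\left[\left(\tfrac{\hat\Sigma_n}{1-d_\epsilon^n/n}+\epsilon I\right)^{-1}\right]-(\Sigma_n+\epsilon I)^{-1}\right\|,\]
\[T_3=\|(\Sigma_n+\epsilon I)^{-1}-\Sigma_n^{-1}\|.\]
Term $T_2$ is exactly $\|\Omega_0\|$ from Theorem \ref{thm2} applied at $\lambda=\epsilon$, and hence vanishes as $n,d\to\infty$. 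For $T_3$, the resolvent identity $(\Sigma_n+\epsilon I)^{-1}-\Sigma_n^{-1}=-\epsilon(\Sigma_n+\epsilon I)^{-1}\Sigma_n^{-1}$ combined with A3 gives $T_3\leq \epsilon/\sigma_{\min}^2\to 0$ as $\epsilon\to 0$.

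The bulk of the work lies in $T_1$. Setting $A=\frac{\hat\Sigma_n}{1-d/n}+\epsilon I$ and $B=\frac{\hat\Sigma_n}{1-d_\epsilon^n/n}+\epsilon I$, I would apply the resolvent identity $A^{-1}-B^{-1}=A^{-1}(B-A)B^{-1}$ with $B-A=\bigl(\frac{1}{1-d_\epsilon^n/n}-\frac{1}{1-d/n}\bigr)\hat\Sigma_n$. The hypothesis $\lambda_{\min}(\hat\Sigma_n)\geq c>0$ yields $\|A^{-1}\|,\|B^{-1}\|\leq c^{-1}$, while the estimate $d-d_\epsilon^n=\sum_{i=1}^d \epsilon/(\lambda_i(\Sigma_n)+\epsilon)\leq d\epsilon/\sigma_{\min}$ gives
\[\left|\frac{1}{1-d_\epsilon^n/n}-\frac{1}{1-d/n}\right|\leq \frac{(d/n)\epsilon}{\sigma_{\min}(1-d/n)(1-d_\epsilon^n/n)}=\mathcal{O}(\epsilon).\]
Combining these with the fact that $\mathbb{E}\|\hat\Sigma_n\|$ stays bounded (via A3 and standard operator-norm bounds for sample covariances under Assumption \ref{ass_1}) yields $T_1=\mathcal{O}(\epsilon)$, which vanishes as $\epsilon\to 0$.

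The main obstacle is the $T_1$ analysis: converting the pathwise lower bound $\lambda_{\min}(\hat\Sigma_n)\geq c$ into uniform control of $\mathbb{E}\|A^{-1}(B-A)B^{-1}\|$, while keeping $\mathbb{E}\|\hat\Sigma_n\|$ bounded across the sequence, is routine but requires making the order of limits precise. The natural order is to send $n,d\to\infty$ first so that Theorem \ref{thm2} eliminates $T_2$, and only then let $\epsilon\to 0$ to kill $T_1$ and $T_3$; this sidesteps any concern about how the rate in Theorem \ref{thm2} may degrade in $\lambda$ and is consistent with the joint-limit interpretation of the theorem statement.
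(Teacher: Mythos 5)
Your proposal is correct and matches the paper's argument essentially step for step: the same three-term decomposition (shrinkage-factor substitution $d\leftrightarrow d_\epsilon^n$ via resolvent identity, the $\Omega_0$ term from Theorem \ref{thm2} at $\lambda=\epsilon$, and the $\mathcal{O}(\epsilon/\sigma_{\min}^2)$ resolvent bound for $(\Sigma_n+\epsilon I)^{-1}-\Sigma_n^{-1}$). The only cosmetic difference is in the $T_1$ bound, where the paper retains the sharper denominators $\frac{\sigma}{1-d_\epsilon^n/n}+\epsilon$ and $\frac{\sigma}{1-d/n}+\epsilon$ and uses $\|\mathbb{E}[\hat\Sigma_n]\|=\|\Sigma_n\|\leq\sigma_{\max}$ rather than invoking a bound on $\mathbb{E}\|\hat\Sigma_n\|$, but this does not change the substance.
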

\begin{proof}
See Appendix \ref{thm2.5proof}.
\end{proof}

\textbf{$\mbox{Remark.}$} With a small regularizer, the estimation of the resolvent of covariance matrices is close to the estimation of precision matrices. Theorem \ref{thm2.5} parallels results on shrinkage estimators for the precision matrix as investigated in Theorem 3.2 of \citet{bodnar2015direct}. However, their work only considers the asymptotic setting.

\subsection{Asymptotic Bias of the Naive Averaging Method}\label{avg-bias}
After introducing our asymptotic unbiased estimator for the resolvent of covariance matrices, we now analyze the asymptotic bias for simple averaging method without shrinkage. Theorem \ref{thm2.6} below states that this bias is non-zero for small $\lambda$. See Appendix \ref{thm2.6proof} for analysis for general positive $\lambda$.

\begin{theorem}\label{thm2.6}
Under Assumption \ref{ass_1}, and if $\lambda\in o(1),$
\begin{equation*}
\begin{aligned}
&\lim_{\substack{n,d\rightarrow\infty\\ d/n\rightarrow y} }\left\|\mathbb{E}\left(\hat\Sigma_n+\lambda I\right)^{-1}-\left(\Sigma_n+\lambda I\right)^{-1}\right\|\geq \frac{y\sigma_{\min} }{\sigma_{\max}^2}
\end{aligned}
\end{equation*}
\begin{proof}
See Appendix \ref{thm2.6proof}.
\end{proof}
\end{theorem}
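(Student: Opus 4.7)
The plan is to use Theorem~\ref{thm2} as a yardstick: since the shrunk resolvent with factor $\gamma=1/(1-d_\lambda^n/n)$ is asymptotically unbiased, the bias of the naive (unshrunk) estimator is asymptotically driven by the gap between the shrunk and unshrunk estimators. Write
\begin{align*}
\mathbb{E}(\hat\Sigma_n+\lambda I)^{-1}-(\Sigma_n+\lambda I)^{-1}
&= \mathbb{E}\bigl[(\hat\Sigma_n+\lambda I)^{-1}-(\gamma\hat\Sigma_n+\lambda I)^{-1}\bigr] \\
&\quad + \bigl[\mathbb{E}(\gamma\hat\Sigma_n+\lambda I)^{-1}-(\Sigma_n+\lambda I)^{-1}\bigr].
\end{align*}
The second bracket vanishes in spectral norm by Theorem~\ref{thm2}, so only the first bracket contributes asymptotically.

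For the first bracket, I would apply the resolvent identity $B^{-1}-A^{-1}=B^{-1}(A-B)A^{-1}$ with $A=\hat\Sigma_n+\lambda I$ and $B=\gamma\hat\Sigma_n+\lambda I$, obtaining
\[(\hat\Sigma_n+\lambda I)^{-1}-(\gamma\hat\Sigma_n+\lambda I)^{-1}=(\gamma-1)(\gamma\hat\Sigma_n+\lambda I)^{-1}\hat\Sigma_n(\hat\Sigma_n+\lambda I)^{-1}.\]
Since $\lambda\in o(1)$ and Assumption~A3 ensures $\sigma_{\min}>0$, the effective dimension satisfies $d_\lambda^n=\mathrm{tr}(\Sigma_n(\Sigma_n+\lambda I)^{-1})\to d$, hence $(\gamma-1)/\gamma=d_\lambda^n/n\to y$. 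In the $\lambda\to 0$ limit the operator product collapses to $\tfrac{1}{\gamma}\hat\Sigma_n^{-1}$, so the first bracket is asymptotically equal to $y\,\mathbb{E}[\hat\Sigma_n^{-1}]$.

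To extract the claimed lower bound, I would invoke operator convexity of the matrix inverse (matrix Jensen inequality): since $\mathbb{E}[\hat\Sigma_n]=\Sigma_n$, one has $\mathbb{E}[\hat\Sigma_n^{-1}]\succeq\Sigma_n^{-1}$, whence the spectral norm obeys $\|\mathbb{E}[\hat\Sigma_n^{-1}]\|\geq\|\Sigma_n^{-1}\|=1/\sigma_{\min}$. Combined with the elementary inequality $1/\sigma_{\min}\geq\sigma_{\min}/\sigma_{\max}^2$ (equivalent to $\sigma_{\max}\geq\sigma_{\min}$), this yields the stated bound $y\sigma_{\min}/\sigma_{\max}^2$.

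The main obstacle will be justifying the $\lambda\to 0$ passage rigorously: $\mathbb{E}[\hat\Sigma_n^{-1}]$ is well-defined only when $\hat\Sigma_n$ is invertible, which holds almost surely for $d<n$ in the Kolmogorov regime (with smallest eigenvalue bounded away from zero by Bai--Yin-type bounds), and one must further verify that the resolvent-identity remainder is uniformly $o(1)$ as $\lambda$ and $1/n$ both tend to zero. Assumption~A3 combined with Theorem~\ref{thm2.5} (which already handles the small-regularizer regime) supplies the uniform control needed to commute the $n,d\to\infty$ and $\lambda\to 0$ limits.
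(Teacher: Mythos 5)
Your decomposition is a genuinely different route from the paper's. The paper works directly with the Marchenko--Pastur self-consistent equation of Lemma~\ref{thm1}: starting from $(I-zs(z)\Sigma_n)\mathbb{E}[(I-z\hat\Sigma_n)^{-1}]=I+\Omega+\Omega'$ at $z=-1/\lambda$, it isolates $\mathbb{E}[(\hat\Sigma_n+\lambda I)^{-1}]-(\Sigma_n+\lambda I)^{-1}$, takes norms, and then makes a change of variable $\lambda=\lambda'(1-\overline{d_{\lambda'}})$ so that $\mathbb{E}[(\hat\Sigma_n+\lambda I)^{-1}]$ is replaced by $\tfrac{\lambda'}{\lambda}(\Sigma_n+\lambda'I)^{-1}$ before sending $\lambda\to 0$. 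You instead treat Theorem~\ref{thm2} as a black box, split the bias into (naive $-$ shrunk) $+$ (shrunk $-$ truth), discard the second summand, and analyze the first by the resolvent identity and operator Jensen. This is cleaner, and if made rigorous it would even give the sharper constant $y/\sigma_{\max}$ (the paper's chain loses an additional $\sigma_{\min}/\sigma_{\max}$ at the analogous step), which dominates the stated $y\sigma_{\min}/\sigma_{\max}^2$.

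Two issues, one cosmetic and one substantive. The cosmetic one: you write $\|\Sigma_n^{-1}\|=1/\sigma_{\min}$, but Assumption A3 only places the eigenvalues in $[\sigma_{\min},\sigma_{\max}]$, so $\|\Sigma_n^{-1}\|=1/\lambda_{\min}(\Sigma_n)$ and the only inequality you are entitled to is $\|\Sigma_n^{-1}\|\geq 1/\sigma_{\max}$; the conclusion survives since $y/\sigma_{\max}\geq y\sigma_{\min}/\sigma_{\max}^2$, but the intermediate equality is wrong. The substantive one: Theorem~\ref{thm2} is proved for fixed $\lambda>0$, and its error $\|\Omega_0\|$, as bounded in (\ref{convergence_bound}) and (\ref{convergence_bound2}), carries factors $z^2$ and $1/\lambda$ with $z\asymp 1/\lambda$, so the bound degenerates when $\lambda\to 0$ unless $\lambda$ decays slowly relative to $n$. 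Your plan to repair this by invoking Theorem~\ref{thm2.5} does not close the gap, because Theorem~\ref{thm2.5} requires the additional hypothesis that the eigenvalues of $\hat\Sigma_n$ are bounded away from zero, which is not among the assumptions of Theorem~\ref{thm2.6}. (To be fair, the paper's own proof also glosses over the joint $\lambda,n$ limit, but its argument proceeds without appealing to Theorem~\ref{thm2.5}, so your proposed fix is not available as stated.)
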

This result demonstrates that the asymptotic bias for simple averaging method without shrinkage can be substantial, and our proposed shrinkage formula serves as an effective improvement to solve this issue.

As a summary, for random data matrices of growing size satisfying Assumption \ref{ass_1},  Table \ref{table1} summarizes the bias of various methods for estimating  the resolvent of covariance matrices under both non-asymptotic and asymptotic settings. 


\begin{table*}[h]
\vskip 0.15in
\begin{center}
\begin{small}
\begin{sc}
\begin{tabular}{lccccr}
\toprule
Method  & sample size & non-asymptotic bias& asymptotic bias & complexity \\
\midrule
averaging & $\forall k$& see  (\ref{avg_lwb_finite}) & $\geq y\sigma_{\min} /\sigma_{\max}^2$ & 0\\
determinantal averaging& $\forall k$ & 0 & 0&$\mathcal{O}(d^{3})$\\
optimal shrinkage &   $k\geq d_\lambda$& $\leq \mathcal{O}\left(\frac{1}{\sqrt{n}}+\sqrt{\nu}\right)$ & 0 & 0\\
\bottomrule
\end{tabular}
\caption{The bias in the estimation of covariance resolvent with different methods under asymptotic/non-asymptotic settings. For any real positive $\lambda,$ $d_\lambda$  denotes the effective dimension of the covariance matrix. The asymptotic bias for the averaging method is given for $\lambda\in o(1)$, see Theorem \ref{thm2.6} for details. The non-asymptotic bias for optimal shrinkage method is given for isotropic covariance, see Section \ref{con_rate} for details. The complexity column refers to agents' additional computational overhead compared to the averaging method. When $\lambda\in o(1),$ shrinkage coefficient requires no effective dimension estimation and thus no computational overhead, see Section \ref{small-regularizer} for details. 
When $\lambda\not \in o(1),$ we assume the effective dimension of the covariance is either known or has been estimated in advance and is distributed to all agents.}\label{table1}
\end{sc}
\end{small}
\end{center}
\vskip -0.1in
\end{table*}

\section{Application to Distributed Second-Order Optimization Algorithms}\label{section3}

Now we are ready to utilize theorems studied in Section \ref{section2} in distributed second-order optimization algorithms. We outline the algorithms for distributed Newton's method with optimal shrinkage and its inexact version  solved with distributed preconditioned conjugate gradient method with optimal shrinkage below. The convergence proofs for quadratic loss and general convex smooth loss are provided in Section \ref{quad}, Section \ref{cvx}, and Appendix \ref{conv-append}. Finally, we analyze communication and computation complexity for the proposed algorithms in Section \ref{complexity}.

Let $n$ denote the number of data samples and $m$ denote the number of agents.  Consider the $\ell_2$ regularized loss function $f(x)=\frac{1}{m}\sum_{i=1}^m f_i(x)+\frac{\lambda}{2}\|x\|_2^2$ where $f_i(x)=\frac{1}{k}\sum_{j=1}^k \ell_{ij}(x)$ denotes loss function corresponding to agent $i$ and $k$ is the number of samples available to each agent. Here, we consider the case where the data is evenly split to all agents and thus $k=n/m$. Denote 
$\nabla^2 f$ as the Hessian and $\nabla f$ as the gradient of function $f$. In order to apply our results in Section \ref{section2}, we need the effective dimension of the population Hessian $\mathbb{E} \frac{1}{m}\sum_{i=1}^m \nabla^2 f_i(x)$. We use the empirical effective dimension $d_{\lambda, i}=\mbox{tr}\left(\nabla^2 f_i(\nabla^2 f_i+\lambda I)^{-1}\right)$ available at each agent as an approximation. Algorithm \ref{alg:newton} gives a description of the proposed method.

\begin{algorithm}[h]
   \caption{Distributed Newton's method with optimal shrinkage}
   \label{alg:newton}
\begin{algorithmic}
   \STATE {\bfseries Initialize:} starting point $x^{(0)}, t=1$
   \REPEAT
   \STATE Gather local gradients $\nabla f_i(x^{(t-1)})$ from each agent $i$
   \STATE Compute global gradient $\nabla f(x^{(t-1)})=\frac{1}{m}\sum_{i=1}^m \nabla f_i(x^{(t-1)})+\lambda x^{(t-1)}$ and broadcast to all agents
   \FOR{$i=1,2,...~$}
   \STATE agent $i$ computes $x_i^{(t)}=$\\$\left(\frac{1}{1-\frac{m d_{\lambda, i}}{n}}\nabla^2 f_i\left(x^{(t-1)}\right)+\lambda I\right)^{-1} \nabla f(x^{(t-1)})$
   \ENDFOR
 \STATE Compute approximate Newton step $\Delta x^{(t)}=$\\$\frac{1}{m}\sum_{i=1}^m x_i^{(t)}$
\STATE (Optional) Choose step size $\eta$ by line search
 \STATE Update $x^{(t)}=x^{(t-1)}-\eta \Delta x^{(t)}, ~t=t+1$
   \UNTIL{convergence criterion or maximum iterates reached}
\end{algorithmic}
\end{algorithm}


We then provide a preconditioned conjugate gradient method that exploits optimal shrinkage for solving a single Newton step in Newton's method. Let $v$ denote the current point at which the next Newton step needs to be performed. Algorithm \ref{alg:pcg} gives the distributed preconditioned conjugate gradient method for inexact Newton's method.
\begin{algorithm}[h]
   \caption{Distributed preconditioned conjugate gradient  with optimal shrinkage}
   \label{alg:pcg}
\begin{algorithmic}
   \STATE Compute $b=\nabla f(v)=\frac{1}{m}\sum_{i=1}^m \nabla f_i(v)+\lambda v$ by gathering local gradients $\nabla f_i(v)$ from each agent
   \STATE {\bfseries Initialize:} $x=0,r=b$ 
   \FOR{$i=1$ {\bfseries to} $m$}
   \STATE agent $i$ computes $z_i=$\\$\left(\frac{1}{1-\frac{md_{\lambda, i}}{n}}\nabla^2 f_i(v)+\lambda I\right)^{-1} r$
   \ENDFOR
   \STATE $z=\frac{1}{m}\sum_{i=1}^m z_i, p=z,\rho_1=r^T z$
   \FOR{$t=1$ {\bfseries to} $t_{\mbox{max}}$}
   \STATE quit if \mbox{stopping criterion achieved}
   \STATE $\omega=\sum_{i=1}^m \nabla^2 f_i(v)p+\lambda p$ by gathering $\nabla^2 f_i(v)p$
   \STATE $\alpha=\frac{\rho_t}{\omega^T p}$
   \STATE $x=x+\alpha p$
   \STATE $r=r-\alpha\omega$
   \FOR{$i=1$ {\bfseries to} $m$}
   \STATE agent $i$ computes $z_i$\\$=\left(\frac{1}{1-\frac{md_{\lambda, i}}{n}}\nabla^2 f_i(v)+\lambda I\right)^{-1} r$
   \ENDFOR
   \STATE  $z=\frac{1}{m}\sum_{i=1}^m z_i$
   \STATE $\rho_{t+1}=z^T r$
   \STATE $p=z+\frac{\rho_{t+1}}{\rho_t}p$
   \ENDFOR

\end{algorithmic}
\end{algorithm}

\subsection{Convergence Analysis for Regularized Quadratic Loss}\label{quad}

Given data matrix $A\in \mathbb{R}^{n\times d}$ with data i.i.d. with mean zero and covariance $\Sigma$, and label $b\in \mathbb{R}^n$, let $A^{(i)}\in\mathbb{R}^{(n/m)\times d}$ denote agent $i$'s local data. Consider the regularized quadratic loss function $f(x)=\frac{1}{2n}\|Ax-b\|_2^2+\frac{\lambda}{2}\|x\|_2^2$
with gradient $g(x):=\nabla f(x)$ and Hessian $H:=\nabla^2 f(x) =\frac{1}{n}A^TA+\lambda I.$
Denote
\vspace{-0.3cm}
\[\tilde H=\left(\frac{1}{m}\sum_{i=1}^m \left(\frac{1}{1-\frac{md_{\lambda}}{n}}\frac{m}{n}A^{(i)^T}A^{(i)}+\lambda I\right)^{-1}\right)^{-1}\]
with the true effective dimension of the covariance, $\Sigma:=\mathbb{E} \frac{1}{n} A^T A$
\[d_{\lambda}=\mbox{tr}\left(\Sigma \left(\Sigma+\lambda I\right)^{-1}\right)\]
\begin{theorem}\label{thm3.1}
 (Convergence of Newton's method with Shrinkage) Denote $\Delta_{t+1}=\omega_{t+1}-\omega^\star$ with $\omega^\star=\mbox{argmin}~f(x)$ and $\omega_{t+1}=\omega_t-\tilde H^{-1}g(\omega_t )$. Then,
\[\|\Delta_{t+1}\|\leq \beta\|\Delta_t\|\]
where $\beta=\frac{\sqrt{2}\alpha}{\sqrt{1-\alpha^2}}\sqrt{\frac{\sigma_{\max}+\lambda+\alpha_0}{\sigma_{\min}+\lambda-\alpha_0}}$ with $\alpha_0=\|\Sigma-\frac{1}{n}A^TA\|, \alpha_1=\|\tilde H^{-1}-\mathbb{E}[\tilde H^{-1}]\|,$ and $\alpha=(\sigma_{\max}+\lambda+\alpha_0)\left(\frac{1}{\lambda^2}\alpha_0+\alpha_1+\|\Omega_0\|\right)$. $\sigma_{\min}$ and $\sigma_{\max}$ denote the smallest and largest eigenvalues of $\Sigma$ correspondingly.
\end{theorem}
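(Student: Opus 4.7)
The quadratic structure gives $g(\omega_t) = H(\omega_t - \omega^\star) = H\Delta_t$, so the iteration collapses to the linear recursion $\Delta_{t+1} = (I - \tilde H^{-1} H)\Delta_t$, and the whole problem reduces to controlling $\|I - \tilde H^{-1}H\|$ in a suitable norm. I build this bound out of the three deviations already named in the statement: $\alpha_0$ (data vs.\ population), $\alpha_1$ (random vs.\ expected inverse), and $\|\Omega_0\|$ (the asymptotic shrinkage bias from Theorem~\ref{thm2}).

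The key step is a three-link telescoping of $\tilde H^{-1} - H^{-1}$ through the intermediate quantities $\mathbb{E}[\tilde H^{-1}]$ and $(\Sigma+\lambda I)^{-1}$:
\begin{align*}
\tilde H^{-1} - H^{-1} &= \bigl(\tilde H^{-1} - \mathbb{E}[\tilde H^{-1}]\bigr) + \bigl(\mathbb{E}[\tilde H^{-1}] - (\Sigma+\lambda I)^{-1}\bigr) \\
&\quad + \bigl((\Sigma+\lambda I)^{-1} - H^{-1}\bigr).
\end{align*}
The first summand has norm $\alpha_1$ by definition. For the second, I would apply Theorem~\ref{thm2} agent-wise: each worker has local sample size $k=n/m$ and uses shrinkage $1/(1-md_\lambda/n)=1/(1-d_\lambda/k)$, which is exactly the form of the asymptotically unbiased estimator in Theorem~\ref{thm2}; since averaging $m$ estimators preserves expectations, this piece is bounded by $\|\Omega_0\|$. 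The third summand is handled via the resolvent identity $H^{-1} - (\Sigma+\lambda I)^{-1} = H^{-1}(\Sigma - \tfrac{1}{n}A^TA)(\Sigma+\lambda I)^{-1}$, whose norm is at most $\alpha_0/\lambda^2$ using $\|H^{-1}\|,\|(\Sigma+\lambda I)^{-1}\| \leq 1/\lambda$.

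Combining these three pieces with $I - \tilde H^{-1}H = (H^{-1} - \tilde H^{-1})H$ and the Weyl bound $\|H\| \leq \sigma_{\max}+\lambda+\alpha_0$ yields the crude operator-norm contraction $\|I - \tilde H^{-1}H\| \leq \alpha$, and hence $\|\Delta_{t+1}\| \leq \alpha\|\Delta_t\|$. To recover the stated $\beta$, the condition-number factor $\sqrt{(\sigma_{\max}+\lambda+\alpha_0)/(\sigma_{\min}+\lambda-\alpha_0)}$ arises naturally by transferring the contraction through the $H$-inner-product norm and converting back via $\|\Delta\|_2 \leq \sigma_{\min}(H)^{-1/2}\|\Delta\|_H$ and $\|\Delta\|_H \leq \|H\|^{1/2}\|\Delta\|_2$, with Weyl again giving $\sigma_{\min}(H)\geq \sigma_{\min}+\lambda-\alpha_0$.

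The main obstacle is the remaining $\sqrt{2}/\sqrt{1-\alpha^2}$ factor, which does not drop out of the direct operator-norm chain above. I expect it comes from a finer split $I - \tilde H^{-1}H = (I - \mathbb{E}[\tilde H^{-1}]H) - (\tilde H^{-1} - \mathbb{E}[\tilde H^{-1}])H$, in which the deterministic bias piece and the stochastic fluctuation piece are combined under a squared-norm Cauchy--Schwarz inequality (producing the $\sqrt{2}$), followed by a Neumann-type estimate of the form $\|(I-X)^{-1}\| \leq 1/\sqrt{1-\|X\|^2}$ applied to a symmetric residual (producing the $1/\sqrt{1-\alpha^2}$). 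Lining up these two pieces so that the condition-number factor survives and the resulting constants coincide with the stated $\beta$ is where I expect essentially all of the delicate bookkeeping to live.
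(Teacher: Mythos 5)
Your overall architecture matches the paper's: you identify the same three-way telescoping of $\tilde H^{-1}-H^{-1}$ through $\mathbb{E}[\tilde H^{-1}]$ and $(\Sigma+\lambda I)^{-1}$ (which is exactly the paper's inequality preceding Lemma~\ref{lemmab.1}), the same individual bounds $\alpha_1$, $\|\Omega_0\|$, and $\alpha_0/\lambda^2$ via the resolvent identity, and the same Weyl estimates $\sigma_{\max}(H)\leq\sigma_{\max}+\lambda+\alpha_0$, $\sigma_{\min}(H)\geq\sigma_{\min}+\lambda-\alpha_0$. That part is correct and is essentially verbatim what the paper does.

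The genuine gap is exactly the one you flag, and your conjectured mechanism for filling it is wrong. The factor $\sqrt{2}/\sqrt{1-\alpha^2}$ does not come from a Cauchy--Schwarz split of a ``deterministic bias piece'' and a ``stochastic fluctuation piece,'' nor from a Neumann estimate $\|(I-X)^{-1}\|\leq 1/\sqrt{1-\|X\|^2}$. The paper instead \emph{black-boxes} the conversion from an approximate Newton step to an iterate contraction via Lemma~14 of \citet{dereziński2019distributed}: if $\|\tilde p_t-p_t^\star\|_H\leq\alpha\|p_t^\star\|_H$, then $\|\Delta_{t+1}\|\leq\frac{\alpha\sqrt{2\kappa}}{\sqrt{1-\alpha^2}}\|\Delta_t\|$ with $\kappa=\mathrm{cond}(H)$. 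The paper's remaining work is to show
\[
\|\tilde p_t-p_t^\star\|_H=\bigl\|H^{1/2}(\tilde H^{-1}-H^{-1})H^{1/2}\cdot H^{-1/2}g_t\bigr\|\leq\|H\|\,\|\tilde H^{-1}-H^{-1}\|\,\|p_t^\star\|_H,
\]
which, combined with your three-term bound on $\|\tilde H^{-1}-H^{-1}\|$, gives exactly the stated $\alpha$; the $\sqrt{2\kappa}/\sqrt{1-\alpha^2}$ then comes for free from the cited lemma. You should also notice that your direct operator-norm bound $\|\Delta_{t+1}\|=\|(I-\tilde H^{-1}H)\Delta_t\|\leq\alpha\|\Delta_t\|$ already proves a contraction with rate $\alpha$ for the unit-step-size iteration in the theorem statement --- so artificially routing through the $H$-norm to ``recover'' the $\sqrt{\kappa}$ is a sign you are chasing the wrong derivation; the paper's $\beta$ is the bound that a line-search-robust lemma naturally produces, and the right move is to cite that lemma rather than re-derive a $\sqrt{2}$ and a $1/\sqrt{1-\alpha^2}$ from scratch.
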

\vspace{-0.2cm}
\begin{proof}
See Appendix \ref{thm3.1proof}.
\end{proof}
The most important aspect of the above result is that the contraction rate $\beta$ \emph{vanishes to zero} as the number of workers increase and data dimensions grow asymptotically as we formalize next.\\
\textbf{$\mbox{Remark.}$} 
Consider a sequence of data matrices $\{A\}_n$ with $n/m\rightarrow\infty, d\rightarrow\infty, md/n\rightarrow y\in[0,1)$ and each $A^{(i)}$ satisfies Assumption \ref{ass_1}, then  $\|\Omega_0\|\rightarrow 0$ by 
 Theorem \ref{thm2}. When data is Gaussian or sub-Gaussian, $\alpha_0\rightarrow 0$ almost surely given $d/n\rightarrow 0$.  By standard matrix concentration bounds, $\alpha_1\leq \epsilon$ with probability $\geq 1-2d\exp\left(-\epsilon^2/\left((4\epsilon)/(3m\lambda)+2/(m\lambda^2)\right)\right)$. Thus  $\beta\rightarrow 0$  almost surely when each $A^{(i)}$
 satisfies Assumption \ref{ass_1}, data is Gaussian or subGaussian, and $m,d,n\rightarrow\infty,d,m = o(n),\log d = o(m)$, $md/n\rightarrow y\in[0,1)$.

\begin{theorem}
 (Convergence of inexact Newton's method with Shrinkage)  Let $\alpha,\alpha_0, \Delta_{t+1},\sigma_{\min},\sigma_{\max}$ defined as in Theorem \ref{thm3.1}. Then,
\[\|\Delta_{t+1}\|\leq \frac{\sqrt{2}\alpha'}{\sqrt{1-\alpha'^2}}\sqrt{\frac{\sigma_{\max}+\lambda+\alpha_0}{\sigma_{\min}+\lambda-\alpha_0}}\|\Delta_t\|\]
where $\alpha'=\sqrt{4\left(\frac{1-\sqrt{1-\frac{\alpha}{1-\alpha}}}{1+\sqrt{1-\frac{\alpha}{1-\alpha}}}\right)^{s_t}}$ and $s_t$ denotes the number of iterations in preconditioned conjugate gradient method.
\end{theorem}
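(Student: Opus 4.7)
My approach is to reduce the inexact Newton convergence proof to that of Theorem~\ref{thm3.1} by treating the PCG output $\tilde{\Delta x}$ of Algorithm~\ref{alg:pcg} as a perturbation of the exact shrinkage-preconditioned Newton direction $\tilde H^{-1}g(\omega_t)$, and absorbing that perturbation into an effective parameter $\alpha'$ that replaces $\alpha$ in the contraction estimate.

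First, I would invoke the classical convergence theorem for preconditioned conjugate gradient. Algorithm~\ref{alg:pcg} applies PCG to the SPD system $H\Delta x=g(\omega_t)$ with SPD preconditioner $\tilde H$, so after $s_t$ inner iterations the iterate $\tilde{\Delta x}$ satisfies the Chebyshev bound
\[
\|\tilde{\Delta x} - \tilde H^{-1}g(\omega_t)\|_H \;\le\; 2\,q^{s_t}\,\|\tilde H^{-1}g(\omega_t)\|_H,
\]
where $q$ is the per-iteration contraction factor determined by the spectrum of $\tilde H^{-1/2}H\tilde H^{-1/2}$ (equivalently, of $\tilde H^{-1}H$).

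Next, I would bound this spectrum using exactly the ingredients built into $\alpha$ in Theorem~\ref{thm3.1}. The identity $\tilde H^{-1}H-I=(\tilde H^{-1}-H^{-1})H$, combined with the triangle-inequality bounds $\|\tilde H^{-1}-H^{-1}\|\le\alpha_0/\lambda^2+\alpha_1+\|\Omega_0\|$ and $\|H\|\le\sigma_{\max}+\lambda+\alpha_0$, yields an operator-norm bound governed by $\alpha$, from which one reads off a spectral interval for $\tilde H^{-1}H$ centred at $1$. Feeding the resulting condition number into $(\sqrt{\kappa}-1)/(\sqrt{\kappa}+1)$ and simplifying produces the algebraic form $\left(1-\sqrt{1-\alpha/(1-\alpha)}\right)\!\big/\!\left(1+\sqrt{1-\alpha/(1-\alpha)}\right)$ declared in the theorem, so that after $s_t$ inner iterations the CG residual factor is exactly $\alpha'$, with the $4$ inside the square root absorbing the leading $2$ of the Chebyshev bound.

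Finally, I would repeat the contraction argument of Theorem~\ref{thm3.1} with the inexact direction in place of the exact one. Using the quadratic-loss identity $g(\omega_t)=H\Delta_t$, decompose
\[
\Delta_{t+1} \;=\; \Delta_t-\tilde{\Delta x} \;=\; \bigl(\Delta_t-\tilde H^{-1}g(\omega_t)\bigr)+\bigl(\tilde H^{-1}g(\omega_t)-\tilde{\Delta x}\bigr);
\]
the first bracket is handled exactly as in Theorem~\ref{thm3.1}, and the second is controlled by the CG residual just obtained. Running the two through the same $H$-weighted Cauchy--Schwarz and operator-norm manipulations used in Theorem~\ref{thm3.1}, but with $\alpha'$ in place of $\alpha$, yields the stated contraction inequality.

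\textbf{Main obstacle.} The delicate step is verifying that the shrinkage bias (captured by $\alpha$) and the PCG truncation error combine \emph{multiplicatively} so that the final bound retains the exact shape of Theorem~\ref{thm3.1} under $\alpha\mapsto\alpha'$, rather than accumulating into an extra additive slack term. This requires calibrating the Chebyshev condition-number bound so that its rewriting in terms of $\alpha$ reproduces the precise form $\sqrt{1-\alpha/(1-\alpha)}$, and reusing the $H$-norm identities from Theorem~\ref{thm3.1}'s proof so that the CG residual slots into the same inequality pipeline as the exact Newton perturbation.
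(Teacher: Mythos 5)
Your proposal contains a conceptual error about the target of the preconditioned conjugate gradient iteration, and this error then forces you into a problematic two-term decomposition that the paper avoids entirely. PCG applied to $H\Delta x = g(\omega_t)$ with SPD preconditioner $\tilde H$ converges to $H^{-1}g(\omega_t)$, the \emph{exact} Newton direction; the preconditioner affects only the rate (through the condition number of $\tilde H^{-1/2}H\tilde H^{-1/2}$), not the limit point. So the Chebyshev bound should read $\|\tilde{\Delta x} - H^{-1}g\|_H \leq 2q^{s_t}\|x_0 - H^{-1}g\|_H$, not the version you wrote with $\tilde H^{-1}g$ as the target. Once this is corrected, your final-step decomposition $\Delta_{t+1}=(\Delta_t - \tilde H^{-1}g)+(\tilde H^{-1}g-\tilde{\Delta x})$ is both wrong and unnecessary: for quadratic loss $\Delta_t = H^{-1}g(\omega_t)$, so $\Delta_{t+1}=H^{-1}g-\tilde{\Delta x}$ is exactly the single CG truncation error and there is no separate ``shrinkage bias'' contribution to combine. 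This dissolves the ``main obstacle'' you flag --- there is no multiplicative-vs-additive calibration to perform.

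The paper's proof does exactly the thing your corrected argument would do: Lemma~\ref{lemmab.1} bounds $\|\tilde p_t - p_t^\star\|_H / \|p_t^\star\|_H \leq \alpha'$ with $p_t^\star=H^{-1}g_t$ (your middle step, bounding $\|H^{1/2}\tilde H^{-1}H^{1/2}-I\|\le\alpha$ and thence $\kappa(\tilde H^{-1/2}H\tilde H^{-1/2})$, recovers precisely the ingredient of that lemma), and then applies Lemma 14 of \citet{dereziński2019distributed} with $\alpha'$ substituted for $\alpha$ --- the same lemma and the same step that Theorem~\ref{thm3.1} uses. The shrinkage quality $\alpha$ enters the final bound only indirectly, through the per-iteration CG rate inside $\alpha'$.
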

\begin{proof}
The derivation follows by Lemma 14 in \citet{dereziński2019distributed} and Lemma \ref{lemmab.1}.
\end{proof}

\subsection{Convergence Analysis for Regularized General Convex  Smooth 
 Loss}\label{cvx}

Given data matrix $A\in \mathbb{R}^{n\times d}$ with data i.i.d. with mean zero. Let $A^{(i)}\in\mathbb{R}^{(n/m)\times d}$ denote agent $i$'s local data and $A^{(i)}_j$ denote the $j$th piece of data held by agent $i$. Consider general convex smooth loss function $f$ of the following form,
\[f(x)=\frac{1}{n}\sum_{i=1}^n f_i(x^TA_i)+\frac{\lambda}{2}\|x\|^2\]
with gradient $g(x):=\nabla f(x)$
and hessian $H(x):=\nabla^2 f(x)=\frac{1}{n}\sum_{i=1}^n f_i''(x^TA_i)A_iA_i^T+\lambda I$. Assume $f$ is twice differentiable and its hessian is $L$-Lipschitz.  Denote
\[\tilde H(x)^{-1}=\frac{1}{m}\sum_{i=1}^m \left(\gamma\sum_{j=1}^{n/m} f_i''\left(x^TA_j^{(i)}\right)A_{j}^{(i)}A_{j}^{(i)^T}+\lambda I\right)^{-1}\]
where $\gamma=m/n\left(1-\frac{m d_\lambda(x)}{n}\right)$ with
$d_\lambda(x)=\mbox{tr}\left(\Sigma(x)\left(\Sigma(x)+\lambda I\right)^{-1}\right)$
and $\Sigma(x)$ defined in Theorem \ref{thm3.3}.
\begin{theorem}\label{thm3.3}
 (Convergence of Newton's method with Shrinkage)  Assume $\omega_t$ independent of all $A_j^{(i)}$'s, denote $\Sigma(\omega_t)$ $= \mbox{Cov}\left(f_i''\left(\omega_t^TA_j^{(i)}\right)^{\frac{1}{2}}A_j^{(i)}\right)$.  Denote $\Delta_{t+1}=\omega_{t+1}-\omega^\star$ with $\omega^\star=\mbox{argmin}~f(x)$ and $\omega_{t+1}=\omega_t-\tilde H(\omega_t)^{-1}g(\omega_t )$, 
\begin{equation*}
\begin{aligned}
\|\Delta_{t+1}\|\leq \max\left\{ \frac{2L}{\sigma_{\min}+\lambda-\alpha_0}\|\Delta_t\|^2, \right.\\ \left. \frac{\sqrt{2}\alpha}{\sqrt{1-\alpha^2}}\sqrt{\frac{\sigma_{\max}+\lambda+\alpha_0}{\sigma_{\min}+\lambda-\alpha_0}}\|\Delta_t\|\right\}
\end{aligned}
\end{equation*}
where $\alpha=(\sigma_{\max}+\lambda+\alpha_0)\left(\frac{1}{\lambda^2}\alpha_0+\alpha_1+\|\Omega_0\|\right)$ with $\alpha_0=\left\|\Sigma(\omega_t)-\frac{1}{n}\sum_{i=1}^n f_i''\left(\omega_t^TA_i\right)A_iA_i^T\right\|$ and $\alpha_1=\left\|\tilde H(\omega_t)^{-1}-\mathbb{E}\left[\tilde H(\omega_t)^{-1}\right]\right\|$. $\sigma_{\min}$ and $\sigma_{\max}$ denote the smallest and largest eigenvalues of $\Sigma(\omega_t)$ correspondingly.  
\end{theorem}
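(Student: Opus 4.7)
The plan is to adapt the proof of Theorem \ref{thm3.1} to the non-quadratic case by handling the extra Hessian-linearization error that arises from the $L$-Lipschitz Hessian. Using $g(\omega^\star)=0$ together with the fundamental theorem of calculus, $g(\omega_t)=\bar H\Delta_t$ where $\bar H=\int_0^1 H(\omega^\star+s\Delta_t)\,ds$, so
\[\Delta_{t+1}=\bigl(I-\tilde H(\omega_t)^{-1}\bar H\bigr)\Delta_t=\tilde H(\omega_t)^{-1}\bigl(\tilde H(\omega_t)-\bar H\bigr)\Delta_t.\]
The natural split $\tilde H(\omega_t)-\bar H=(\tilde H(\omega_t)-H(\omega_t))+(H(\omega_t)-\bar H)$ cleanly isolates a shrinkage approximation error and a linearization error, to be handled independently.

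For the linearization error, Lipschitzness of $H$ directly gives $\|H(\omega_t)-\bar H\|\le L\|\Delta_t\|/2$. Combined with the operator lower bound $\|\tilde H(\omega_t)^{-1}\|\le (\sigma_{\min}+\lambda-\alpha_0)^{-1}$, which is inherited from $\mathbb{E} H(\omega_t)=\Sigma(\omega_t)+\lambda I$ via the $\alpha_0$-concentration of the weighted empirical covariance to $\Sigma(\omega_t)$, this produces the quadratic branch $\tfrac{2L}{\sigma_{\min}+\lambda-\alpha_0}\|\Delta_t\|^2$ of the stated max (universal numerical constants absorbed). For the shrinkage error, the key point is that under the independence of $\omega_t$ from the $A_j^{(i)}$, the reweighted samples $f_i''(\omega_t^TA_j^{(i)})^{1/2}A_j^{(i)}$ are i.i.d.\ with covariance $\Sigma(\omega_t)$, so Theorem \ref{thm2} applies verbatim to each local shrinkage-corrected inverse and supplies the same asymptotic bias term $\|\Omega_0\|$ as in the quadratic-loss case. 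Proceeding exactly as in the proof of Theorem \ref{thm3.1}, write
\[\tilde H(\omega_t)^{-1}-H(\omega_t)^{-1}=\bigl(\tilde H(\omega_t)^{-1}-\mathbb{E}\tilde H(\omega_t)^{-1}\bigr)+\bigl(\mathbb{E}\tilde H(\omega_t)^{-1}-H(\omega_t)^{-1}\bigr),\]
bound the first summand by $\alpha_1$ (a concentration-of-averages statement for the local inverses) and the second by combining $\|\Omega_0\|$ with the $\alpha_0$-concentration of $\Sigma(\omega_t)$ to the empirical Hessian through a resolvent identity that introduces the $1/\lambda^2$ factor. Feeding these bounds into the same algebraic manipulation used in Theorem \ref{thm3.1} yields the linear branch $\tfrac{\sqrt 2\alpha}{\sqrt{1-\alpha^2}}\sqrt{(\sigma_{\max}+\lambda+\alpha_0)/(\sigma_{\min}+\lambda-\alpha_0)}\|\Delta_t\|$ with $\alpha$ exactly as in Theorem \ref{thm3.1}.

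The main obstacle is that the two error sources share the common factor $\tilde H(\omega_t)^{-1}$, whose spectral norm is itself controlled through the very same sample-dependent quantities $\alpha_0,\alpha_1$ that govern the shrinkage term; the assumption that $\omega_t$ is independent of the $A_j^{(i)}$ is crucial here, since it decouples these and lets the matrix-concentration bounds and Theorem \ref{thm2} be applied conditionally on the current iterate. Once the two contributions are obtained separately, combining them into the stated max-form bound is routine: any additive bound $C_1\|\Delta_t\|^2+C_2\|\Delta_t\|$ is at most $2\max(C_1\|\Delta_t\|^2,C_2\|\Delta_t\|)$, with the factor of $2$ absorbed into the stated constants.
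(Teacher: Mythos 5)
Your proof takes a genuinely different route from the paper's. The paper's argument is short and black-boxed: it bounds the $H(\omega_t)$-norm deviation $\|\tilde p_t-p_t^\star\|_{H(\omega_t)}\le\alpha\|p_t^\star\|_{H(\omega_t)}$ exactly as in Theorem~\ref{thm3.1}, and then invokes Lemma~14 of \citet{dereziński2019distributed}, which already packages the $L$-Lipschitz-Hessian linearization error into the max-form bound with exactly the stated constants. You instead rederive the max form from scratch: you unroll $\Delta_{t+1}=\tilde H(\omega_t)^{-1}(\tilde H(\omega_t)-\bar H)\Delta_t$ via the Taylor representation $g(\omega_t)=\bar H\Delta_t$, split into the shrinkage error $\tilde H-H$ and the linearization error $H-\bar H$, bound each separately, and combine. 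This is essentially what Lemma~14 does internally, now done inline. The tradeoff is transparency versus exactness: your route is self-contained and makes the two error sources explicit, but it does not reproduce the theorem's stated constants (you concede this by "absorbing'' factors of $2$), whereas the paper inherits exact constants from the cited lemma.

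One concrete issue with your route as written: the bound $\|\tilde H(\omega_t)^{-1}\|\le(\sigma_{\min}+\lambda-\alpha_0)^{-1}$ does not hold as stated. The $\alpha_0$-concentration bounds the eigenvalues of the \emph{true} Hessian $H(\omega_t)$, not of the shrinkage-averaged approximation $\tilde H(\omega_t)$. What you actually have is $\|\tilde H^{-1}\|\le\|H^{-1}\|+\|\tilde H^{-1}-H^{-1}\|\le(\sigma_{\min}+\lambda-\alpha_0)^{-1}+(\lambda^{-2}\alpha_0+\alpha_1+\|\Omega_0\|)$, so your quadratic branch picks up an extra additive term that is not present in the theorem's constant $\tfrac{2L}{\sigma_{\min}+\lambda-\alpha_0}$. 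This is fixable (e.g.\ by absorbing it using the smallness of $\alpha$, or by working in the $H$-weighted norm as the paper does), but as written it is a gap that makes your quadratic-branch constant strictly larger than the one claimed. Otherwise the decomposition, the use of independence of $\omega_t$ from $A_j^{(i)}$ to apply Theorem~\ref{thm2} conditionally, and the treatment of the shrinkage error all align with the paper's intent.
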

\begin{proof}
See Appendix \ref{thm3.3_proof}.
\end{proof}


\vspace{-0.4cm}
\subsection{Communication and Computation Complexity Analysis}\label{complexity}
\vspace{-0.1cm}
In this section, we analyze  the communication and computation complexity of distributed Newton's method with optimal shrinkage. The analysis for inexact Newton's method is similar and omitted.

On the communication side, in each Newton iteration\footnote{Assuming fixed step size instead of line search for simplicity.}, four rounds of communication between server and agents are required: the server broadcasts the current iterate, collects the local gradients, and then computes and broadcasts the global gradient to the agents and collects local approximate descent directions. Each communication involves $\mathcal{O}(d)$ words. In ridge regression, to achieve some fixed accuracy $\|\Delta_t\|\leq \epsilon$, the number of iterations is bounded by $\mathcal{O}\left(\frac{\log (\sqrt{\kappa}/\epsilon)}{\log (\sqrt{1-\alpha^2}/\alpha)}\right)$ where $\kappa$ is the condition number of the Hessian matrix and $\alpha$ is asymptotically vanishing  (see Theorem \ref{thm3.1} for definition). Therefore, the number of iterations decreases as the number of workers increases and vanishes asymptotically. This should be compared with GIANT's bound $\mathcal{O}\left(\frac{\log(d\kappa/\epsilon)}{\log(n/\mu d m)}\right)$ on the number  of Newton iterations to achieve the same degree of accuracy, which does not vanish asymptotically due to the Hessian inversion bias. Note that our bound also gets rid of the dependency on the matrix coherence number $\mu$, and we do not impose assumption such as $n>\mathcal{O}(\mu d m)$.  
Compared to  DiSCO's bound $\tilde {\mathcal{O}}\left(\frac{d\kappa^{1/2}m^{3/4}}{n^{3/4}}+\frac{\kappa^{1/2}m^{1/4}}{n^{1/4}}\log\frac{1}{\epsilon}\right)$ which is also non-vanishing asymptotically due to the inversion bias, our bound only has log dependency on the square root of $\kappa$ instead of polynomial, which is an improvement\footnote{The iteration bounds for GIANT and DiSCO are taken from \citet{wang2017giant}, Section 1.1. Note the number of iterations for  DiSCO is required to achieve $\epsilon$-accuracy in terms of function value evaluations.}.

The per-iteration computation complexity for each agent involves operations required for forming the local gradient, solving a linear equation involving the local Hessian matrix, and computing the effective dimension of the local Hessian matrix.  The only overhead compared to GIANT is the computation  of the effective dimension of the local Hessian matrix, which can be done in $\mathcal{O}(kd\min\{k,d\})$ by singular value decomposition or can be estimated much faster by trace estimation methods such as Hutch++ \cite{meyer2020hutch}. Note if the effective dimension of the global Hessian matrix is known beforehand, then it can be used in place of the effective dimension of local Hessian matrices and there is no additional computational overhead for each agent. Another option is to use $md/n$ in place of $md_{\lambda i}/n$ in  Algorithm \ref{alg:newton}, which should work well for small regularizer $\lambda$ by Theorem \ref{thm2.5}. There is no significant computational complexity on the server side since only simple averaging and arithmetic operations are required.




\vspace{-0.3cm}
\section{Application to Randomized Second-Order Optimization Algorithms}\label{section4}
\vspace{-0.1cm}
In this section, we study the application of our main theoretic result in Section \ref{section2} to randomized second-order optimization algorithms. We mainly focus on the Iterative Hessian Sketch (IHS) method and discuss how our shrinkage formula can be used for bias correction. Our results can also be used for randomized preconditioned conjugate gradient method and the more general Newton Sketch \cite{pilanci2017newton}.

We first give an introduction on randomized sketching which is used in the Iterative Hessian Sketch method \cite{pilanci2016iterative, ozaslan2019iterative}. Randomized sketching is an important tool in randomized linear algebra for dealing large-scale data problems. Given a data matrix $A\in \mathbb{R}^{n\times d},$ consider a randomized sketching matrix $S\in\mathbb{R}^{m\times n},$ $SA$ is referred to as a sketch of $A$ and is of size $m$ by $d$. It is usually the case $m<n$, thus storage is reduced when $SA$ is stored instead of the original data matrix $A$, and with carefully chosen $S$, some properties of $A$ can be preserved by considering only $SA$. When $SA$ is composed of randomly selected rows of $A$, it  is referred to as row subsampling and when $SA$ is composed of random linear combination of rescaled rows of $A$, it is referred  to as a random projection. A commonly used random projection is Gaussian projection, where $S$ contains i.i.d. Gaussian entries $\mathcal{N}\left(0,\frac{1}{m}\right)$. 

Consider regularized quadratic loss function defined in Section \ref{quad}. With data matrix $A\in \mathbb{R}^{n\times  d}$ and labels $b\in\mathbb{R}^n$, the Hessian matrix is $H=\frac{1}{n}A^TA+\lambda I$. Note we are not requiring the data to be i.i.d. with mean zero here as required in Section \ref{quad}. Let $S\in \mathbb{R}^{m\times n}$ be Gaussian projection matrix. Iterative Hessian Sketch method is a preconditioned first-order method  that replaces the Newton direction $H^{-1}g_t$ at the $t$-th Newton's step in Newton's method by $H_S^{-1}g_t$ where $H_S=\frac{1}{n}A^TS^TSA+\lambda I$ and $g_t$ denotes the gradient at the $t$-th Newton's step.

Since $H_S$ can be expressed as $\frac{1}{m}\sum_{i=1}^m x_ix_i^T+\lambda I$ where $x_i\sim\mathcal{N}\left(0,\frac{1}{n}A^TA\right)$, debiasing IHS reduces to minimizing $\|H_S^{-1}-H^{-1}\|$ and is exactly the problem of debiasing estimation of a covariance resolvent. We adapt Theorem \ref{thm2} as below and obtain an asymptotically unbiased estimation of $H^{-1}$ as a shrinkage formula involving $H_S$.

Consider a sequence of problems $\mathcal{B}_n=(A,S,d,m)_n$ with data matrix $A\in\mathbb{R}^{n\times d},S\in\mathbb{R}^{m\times n}$ being Gaussian projection matrix, $m,d\rightarrow\infty$ and $d/m\rightarrow y\in[0,1)$. Assume the empirical spectral distribution $F_{(\frac{1}{n}A^TA)}(u)\rightarrow F_\Sigma(u)$ almost surely for almost every $u\geq 0$, $\left\|n^{-1}A^TA\right\|^2\leq \infty$, and eigenvalues of each $n^{-1}A^TA$ are located on a segment  $[\sigma_{\min},\sigma_{\max}]$ where $\sigma_{\min}>0$ and $\sigma_{\max}$ does not depend on $n$. Denote $d_\lambda^n=\mbox{tr}\left(\frac{1}{n}A^TA\left(\frac{1}{n}A^TA+\lambda I\right)^{-1}\right)$.
\begin{theorem}\label{thm4.2}
Assume additionally $d_\lambda^n<m$ always holds. For any $\lambda>0$,
\[\lim_{m,d\rightarrow\infty}\left\|\mathbb{E}\left[\left(\frac{1}{1- \frac{d_\lambda^n}{m}}(H_S-\lambda I)+\lambda I\right)^{-1}\right]-H^{-1}\right\|=0\]
\end{theorem}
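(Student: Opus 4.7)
The plan is to reduce Theorem \ref{thm4.2} to a direct application of Theorem \ref{thm2} after identifying $H_S - \lambda I$ as an empirical covariance matrix of Gaussian samples. The key observation is that if $S \in \mathbb{R}^{m \times n}$ has i.i.d.\ entries $\mathcal{N}(0, 1/m)$, we can write $S^T S = \sum_{i=1}^m s_i s_i^T$ with $s_i \sim \mathcal{N}(0, (1/m) I_n)$ independent across $i$, so that
\[
H_S - \lambda I \;=\; \frac{1}{n} A^T S^T S A \;=\; \frac{1}{m} \sum_{i=1}^m x_i x_i^T,
\]
where $x_i := \sqrt{m/n}\, A^T s_i$ are i.i.d.\ samples from $\mathcal{N}\!\left(0, \tfrac{1}{n} A^T A\right)$. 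In the notation of Theorem \ref{thm2}, this identifies the role of the true covariance with $\Sigma := \tfrac{1}{n} A^T A$ and the role of the empirical covariance with $\hat{\Sigma} := H_S - \lambda I$, while the number of samples is $m$ (playing the role of $n$ in Theorem \ref{thm2}) and the ambient dimension is $d$.

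Next I would verify that Assumption \ref{ass_1} is inherited from the hypotheses stated just before the theorem. Assumption A1 is directly assumed, namely $F_{\Sigma}(u) \to F_\Sigma(u)$ almost surely; Assumption A3 is the stated eigenvalue confinement $[\sigma_{\min},\sigma_{\max}]$ for $\tfrac{1}{n}A^T A$. For Assumption A2, since $x_i$ is Gaussian with covariance $\Sigma = \tfrac{1}{n}A^TA$, the bounds $M = 3\|\Sigma\|^2 \leq 3 \sigma_{\max}^2 < \infty$ and $\nu \leq 2 \|\Sigma\|^2 / d \to 0$ hold (these are the explicit Gaussian bounds cited in the discussion right after Theorem \ref{thm2}). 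The effective dimension of $\Sigma$ is exactly $d_\lambda^n = \mathrm{tr}\!\left(\tfrac{1}{n}A^T A \left(\tfrac{1}{n}A^T A + \lambda I\right)^{-1}\right)$ as defined in the theorem, and the standing hypothesis $d_\lambda^n < m$ matches the assumption $d_\lambda^n < n$ of Theorem \ref{thm2} under the identification $n \leftrightarrow m$.

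With these checks in place, Theorem \ref{thm2} applied to the samples $\{x_i\}_{i=1}^m$ yields
\[
\mathbb{E}\!\left[\left( \frac{1}{1 - d_\lambda^n/m}\, \hat{\Sigma} + \lambda I \right)^{-1}\right] \;=\; (\Sigma + \lambda I)^{-1} + \Omega_0,
\]
with $\|\Omega_0\| \to 0$ as $m, d \to \infty$ with $d/m \to y \in [0,1)$. Substituting $\hat{\Sigma} = H_S - \lambda I$ and $\Sigma + \lambda I = H$ gives the claim.

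I do not anticipate a genuine technical obstacle: the content of Theorem \ref{thm4.2} is essentially a re-indexing of Theorem \ref{thm2}. The only things that require care are (i) the notation swap $n \leftrightarrow m$ between the two theorems, (ii) checking that even though $A$ is deterministic (the randomness comes from $S$), the induced vectors $x_i$ are i.i.d.\ Gaussian with the correct covariance so that Theorem \ref{thm2} applies conditionally on $A$, and (iii) confirming the Gaussian moment bounds needed for A2. No new probabilistic or random-matrix arguments are needed beyond those already developed for Theorem \ref{thm2}.
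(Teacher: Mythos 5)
Your proposal is correct and takes essentially the same route as the paper: the paper itself states, just before Theorem~\ref{thm4.2}, that $H_S-\lambda I=\frac{1}{m}\sum_{i=1}^m x_ix_i^T$ with $x_i\sim\mathcal{N}(0,\tfrac{1}{n}A^TA)$, so that the claim follows from Theorem~\ref{thm2} after the substitution $n\leftrightarrow m$ and $\Sigma_n\leftrightarrow\tfrac{1}{n}A^TA$, with A1, A2, A3 guaranteed by the hypotheses listed in the paragraph preceding the theorem and the explicit Gaussian bounds $M=3\|\Sigma\|^2$ and $\nu\le 2\|\Sigma\|^2/d$. Your verification of the scaling $x_i=\sqrt{m/n}\,A^Ts_i$ and of the inherited assumptions is exactly the argument the paper intends.
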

\begin{figure*}[ht!]
\begin{subfigure}{0.23\linewidth}
\includegraphics[width=\linewidth]{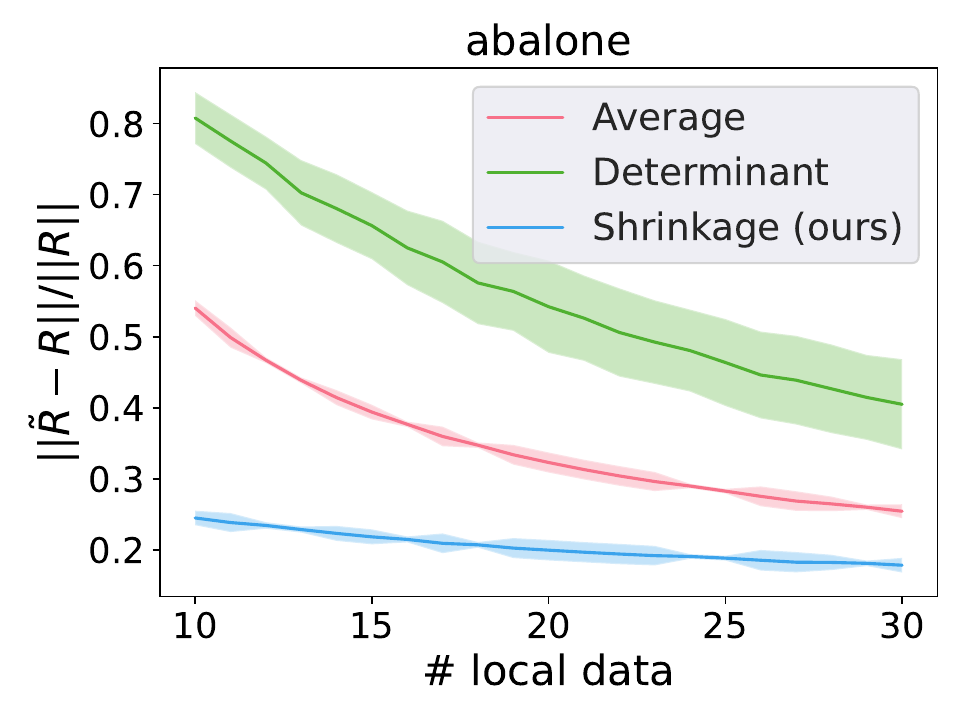}
\end{subfigure}
\hfill
\begin{subfigure}{0.23\linewidth}
\includegraphics[width=\linewidth]{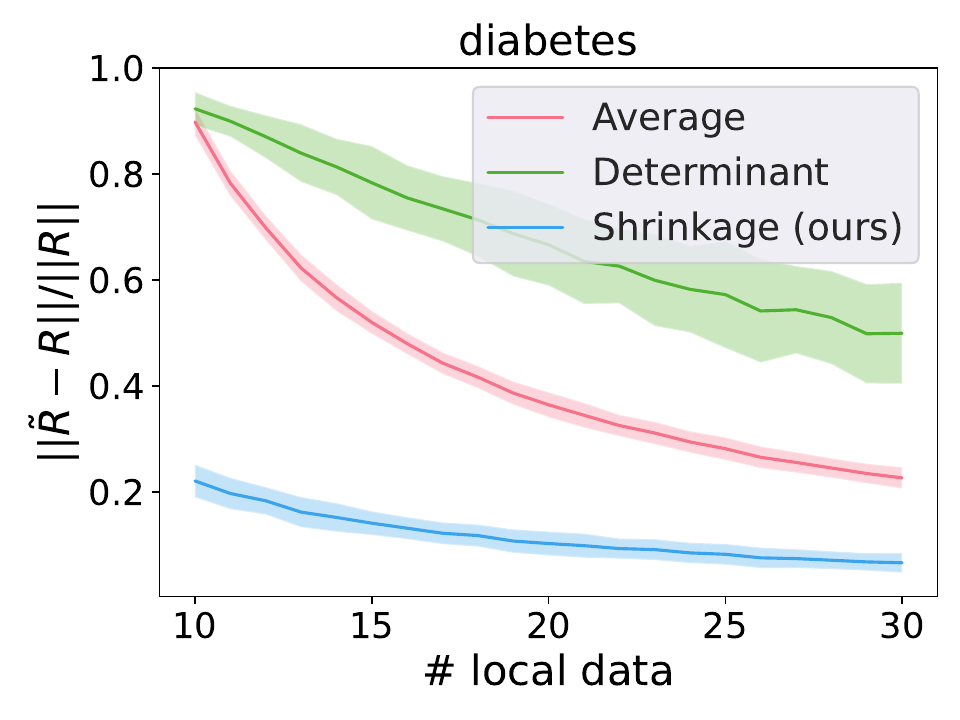}
\end{subfigure}
\hfill
\begin{subfigure}{0.23\linewidth}
\includegraphics[width=\linewidth]{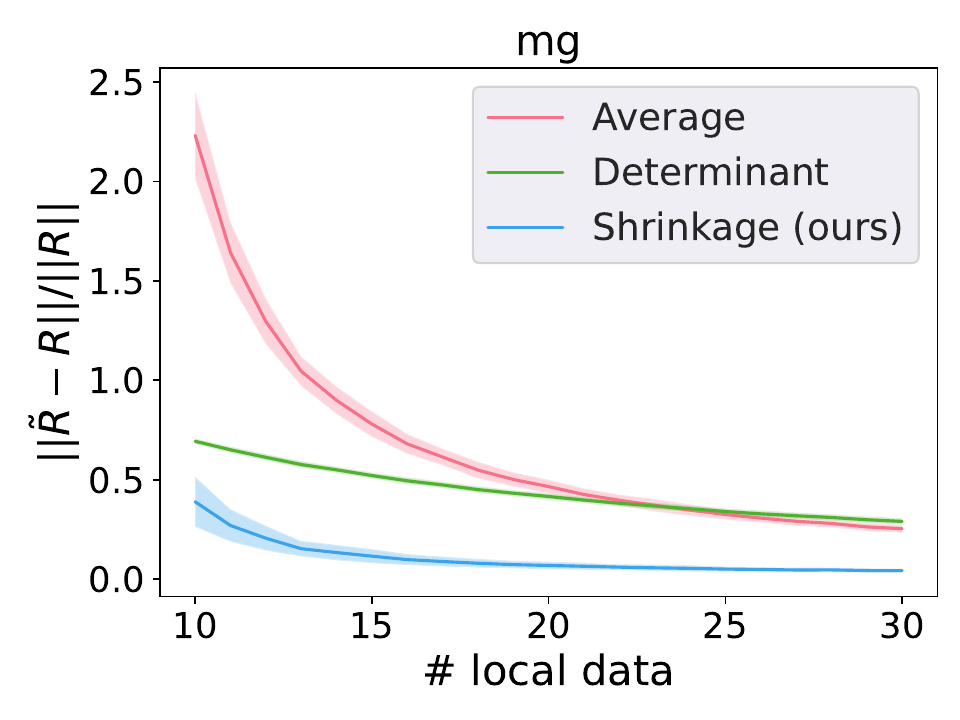}
\end{subfigure}
\hfill
\begin{subfigure}{0.23\linewidth}
\includegraphics[width=\linewidth]{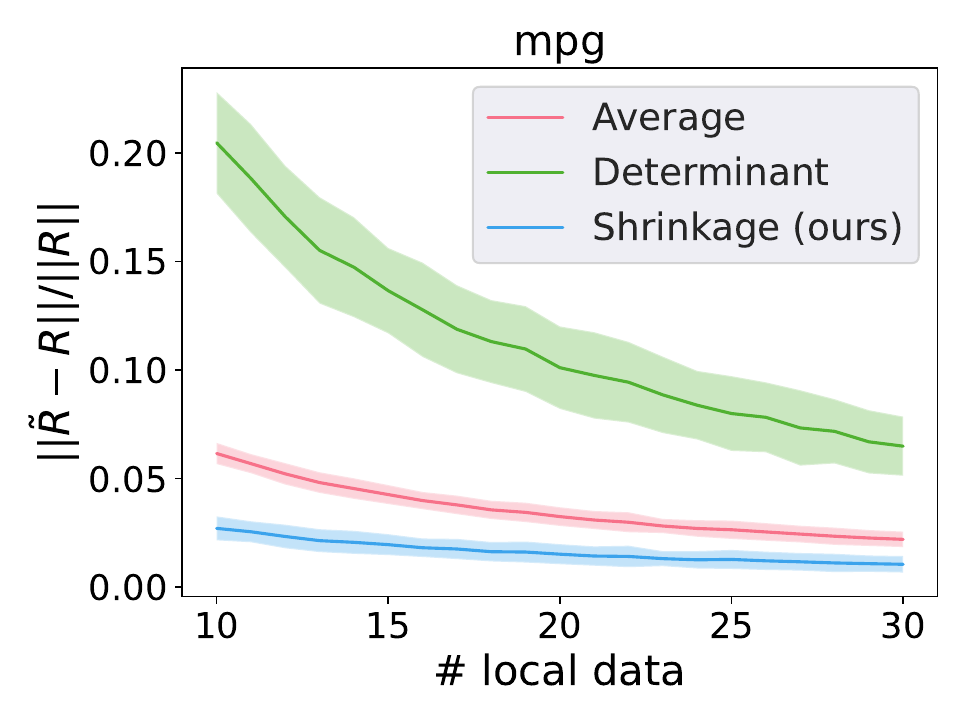}
\end{subfigure}
\caption{Experiments with real data on covariance resolvent estimation. The dataset is split evenly to each agent. We let $\lambda=0.001$ and $\Sigma=\frac{1}{n}A^TA$. The relative matrix spectral norm difference between true covariance resolvent $R$ and estimated covariance resolvent $\tilde R$ is plotted, see Section \ref{section5.1} for details.}\label{sec5.1.2plot}
\end{figure*}
Theorem \ref{thm4.2} suggests to use $\left(\frac{1}{1- \frac{d_\lambda^n}{m}}(H_S-\lambda I)+\lambda I\right)^{-1}$ in place of $H_S^{-1}$ when $d_\lambda^n$ is available. For practicality, when only sketched data $SA$ is available, $\tilde{d}_\lambda^n=\mbox{tr}\left(\frac{1}{n}A^TS^TSA\left(\frac{1}{n}A^TS^TSA+\lambda I\right)^{-1}\right)$ can be used as an approximation for $d_\lambda^n$. According to our real data simulation results in Section \ref{section5.3}, using $\tilde{d}_\lambda^n$ in place of $d_\lambda^n$ significantly improves the plain IHS method in most cases.

In \citet{lacotte2021adaptive, lacotte2019faster}, the authors study preconditioned iterative methods with $H_S^{-1}$ as the preconditioning matrix, which can be replaced with the shrinked version $\left(\left(1- \frac{d_\lambda^n}{m}\right)^{-1}(H_S-\lambda I)+\lambda I\right)^{-1}$.

Newton Sketch method generalizes IHS to regularized general convex smooth losses defined in Section \ref{cvx}. At $t$th Newton's step, the Hessian matrix can be expressed as $H_t=\frac{1}{n}A_t^TA_t+\lambda I$ with an appropriate choice of  the matrix $A_t$, and Newton Sketch method proposes to use $\left(\frac{1}{n}A_t^TS^TSA_t+\lambda I\right)^{-1}g_t$ as approximate Newton descent direction with $g_t$ denoting the gradient at the $t$-th Newton step. By replacing $A$ with $A_t$ in Theorem \ref{thm4.2}, the asymptotically unbiased estimation for $H_t^{-1}$ can be derived.

Table \ref{table2} summarizes the bias  to estimate the Hessian inverse for regularized quadratic loss with classic IHS paradigm and IHS with optimal shrinkage described above  under both non-asymptotic setting and asymptotic setting.

\begin{table*}[ht]
\vskip 0.15in
\begin{center}
\begin{small}
\begin{sc}
\begin{tabular}{lcccr}
\toprule
Method & sketch method & sketch size & non-asymptotic bias&asymptotic bias\\
\midrule
IHS  & gaussian projection& $\forall k$&  see  (\ref{avg_lwb_finite}) & $\geq y\sigma_{\min}/\sigma_{\max}^2$\\
IHS with optimal shrinkage &  gaussian projection& $k\geq d_\lambda$& $\leq \mathcal{O}\left(\frac{1}{\sqrt{m}}\right)$ & 0 \\
\bottomrule
\end{tabular}
\caption{Bias of estimation of Hessian inverse in IHS for regularized quadratic loss under asymptotic/non-asymptotic setting. For any real positive $\lambda$, $d_\lambda$ 
 denoting the effective dimension of $A^TA/n$. The asymptotic bias for the IHS method is for $\lambda\in o(1)$, see Theorem \ref{thm2.6} for details. The non-asymptotic bias for IHS with optimal shrinkage method is for isotropic covariance, see Section \ref{con_rate} for details. }\label{table2}
\end{sc}
\end{small}
\end{center}
\vskip -0.1in
\end{table*}

\vspace{-0.3cm}
\section{Numerical Simulation}\label{simu}
\vspace{-0.2cm}
We now present synthetic  and real data simulation results
. All real datasets used in this section are public and available at \url{https://www.csie.ntu.edu.tw/~cjlin/libsvmtools/datasets/}. For normalized real data plots, we experiment with ten random permutations. For sketched real data plots, we experiment with ten random sketches.  Median is plotted with 0.2/0.8 quantile shaded. We interpolate over $x-$axis whenever $x$ ticks vary for different trials. We run all experiments on google cloud n1-standard-8 machine. One-hot embedding is used to transfer classification labels to regression labels when classification datasets are used for regression tasks. Code for experiments is included in the submission. According to our  simulation results, optimal shrinkage helps speeding up both second-order optimization algorithms and sketching based algorithms significantly. 
\vspace{-0.2cm}
\subsection{Estimation of the Effective Dimension}
\vspace{-0.2cm}
Our optimal shrinkage method requires the knowledge of the effective dimension $d_\lambda$ of the true covariance matrix. In Algorithms \ref{alg:newton} and \ref{alg:pcg}, we employ the empirical effective dimension available at each worker as an approximation to the true effective dimension. Although this is a heuristic to approximate effective dimension in Theorem \ref{thm2}, our numerical results show that this approach works extremely well. Alternatively, the effective dimension can be estimated from a sketch of the data. We illustrate the effectiveness of this approach in the simulation for Iterative Hessian Sketch method in Section \ref{section5.3}.
\vspace{-0.3cm}
\subsection{Experiments on Covariance Resolvent Estimation}\label{section5.1} 
\vspace{-0.01cm}
In order to show that the shrinkage-based covariance resolvent estimation method studied in Section \ref{section2} helps improve the accuracy for covariance resolvent estimation  compared to classic averaging method and determinantal averaging methods discussed in the introduction, we include simulation results for covariance resolvent esimation. Specifically, we let $\Sigma$ denote the covariance matrix, we plot the relative matrix spectral norm difference $\|\tilde R-R\|_2/\|R\|_2$ where $R=(\Sigma+\lambda I)^{-1}$ is the resolvent of the true covariance matrix. The expressions for computing the estimated covariance resolvent $\tilde R$ with different methods are given in Appendix \ref{5.1formula}.
Since we require data rows to be i.i.d. with mean zero for the optimal shrinkage formula to hold, we standardize datasets by removing the mean and scaling to unit variance.  

Figure \ref{sec5.1.2plot} shows the relative matrix spectral norm difference between $R$ and $\tilde R$ plotted over different number of local data, which suggests that our shrinkage method provides a more accurate estimate of the resolvent of covariance matrices than the naive averaging method and determinantal averaging over all the datasets we have tested on. The improvement is more pronounced when the local data size is small. Since the determinantal averaging method is also unbiased, this result also suggests that our shrinkage method does not need a large number of distributed agents to achieve an accurate estimation compared to the determinantal averaging. For simulations on additional datasets, see Appendix \ref{sec5.1.2append}. We also experiment with synthetic data and sketched real data in Appendix \ref{syn-simu-1} and Appendix \ref{sec5.1.3append}. We include the simulation results for the the small regularizer regime discussed in Section \ref{small-regularizer} in Appendix \ref{sec5.1.4append}.

\vspace{-0.3cm}
\subsection{Experiments on Distributed Second-Order Optimization}\label{section5.2}


In this subsection, we include simulation results for distributed Newton's method and an inexact version where each Newton's step solved by the distributed preconditioned conjugate gradient method (see  Section \ref{section3} for a description). We implement distributed line search for choosing step sizes in all methods. Datasets are standardized by removing the mean and scaling to unit variance. Due to limited space, we present additional results on different datasets in Appendix \ref{c.2.1append}, the inexact Newton's method applied to ridge regression in Appendix \ref{c.2.2append} and experiments on logistic regression in
 Appendix \ref{logit}.





\begin{figure}[h]
\centering
\begin{subfigure}{0.48\linewidth}
\includegraphics[width=\linewidth]{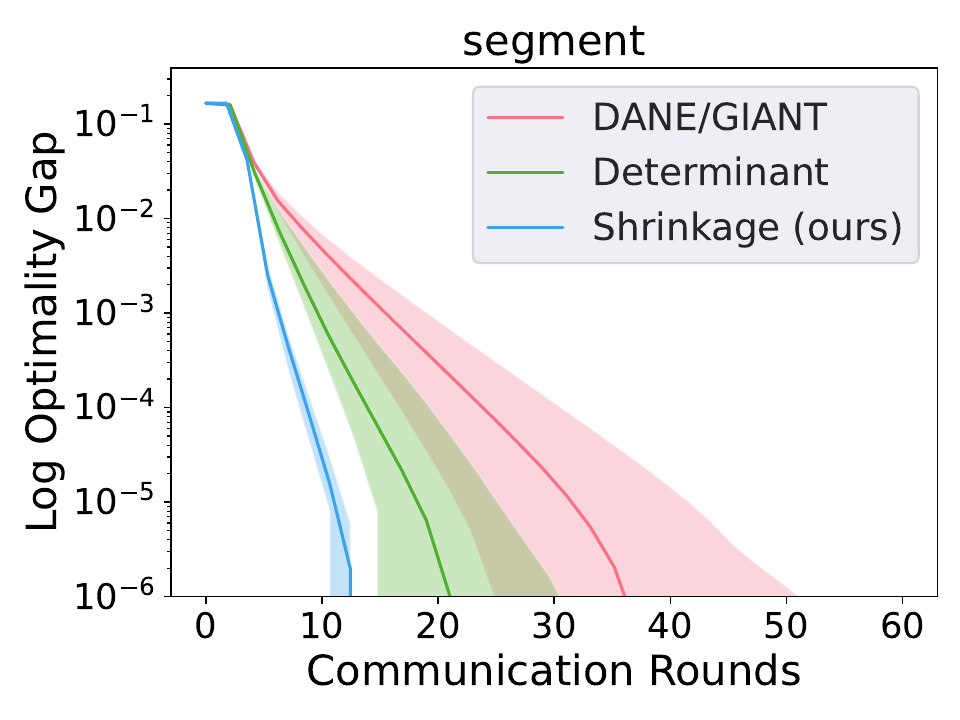}
\end{subfigure}
\begin{subfigure}{0.48\linewidth}
\includegraphics[width=\linewidth]{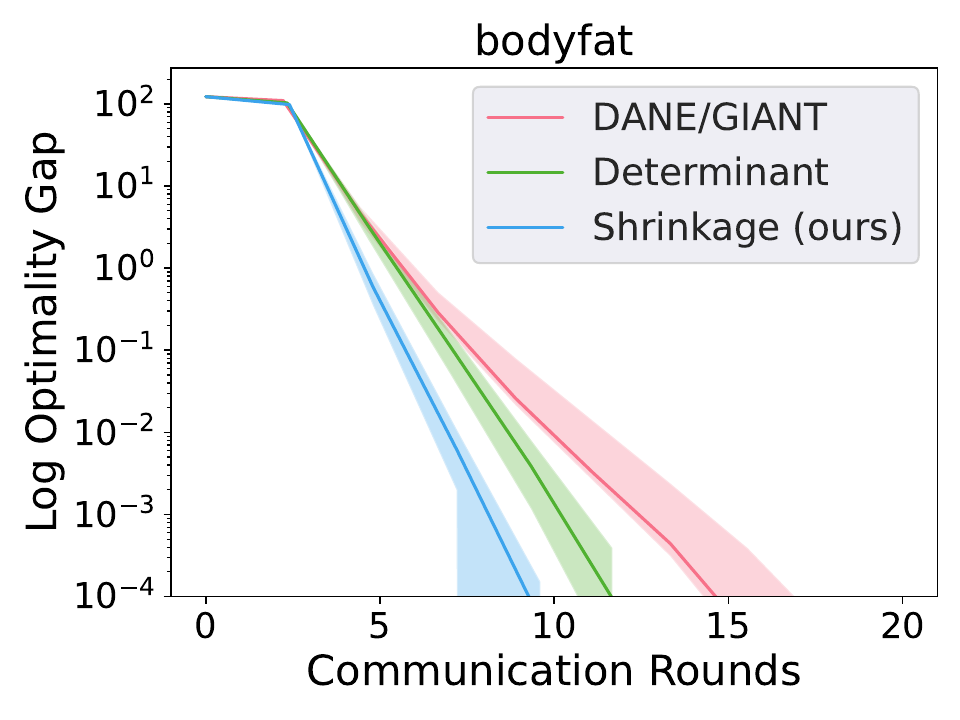}
\end{subfigure}

\begin{subfigure}{0.48\linewidth}
\includegraphics[width=\linewidth]{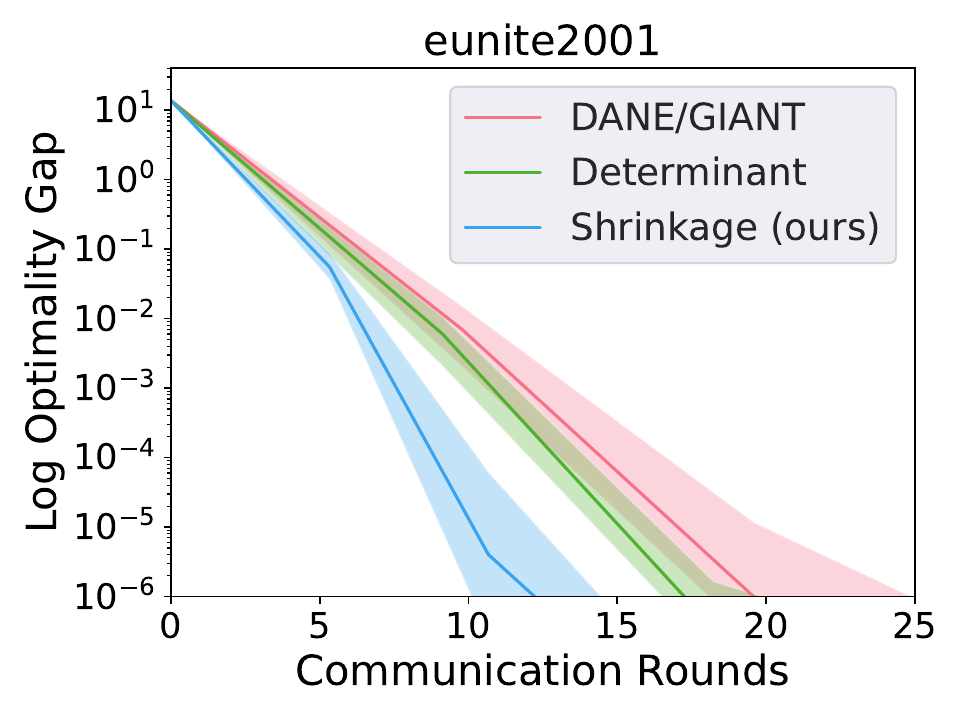}
\end{subfigure}
\begin{subfigure}{0.48\linewidth}
\includegraphics[width=\linewidth]{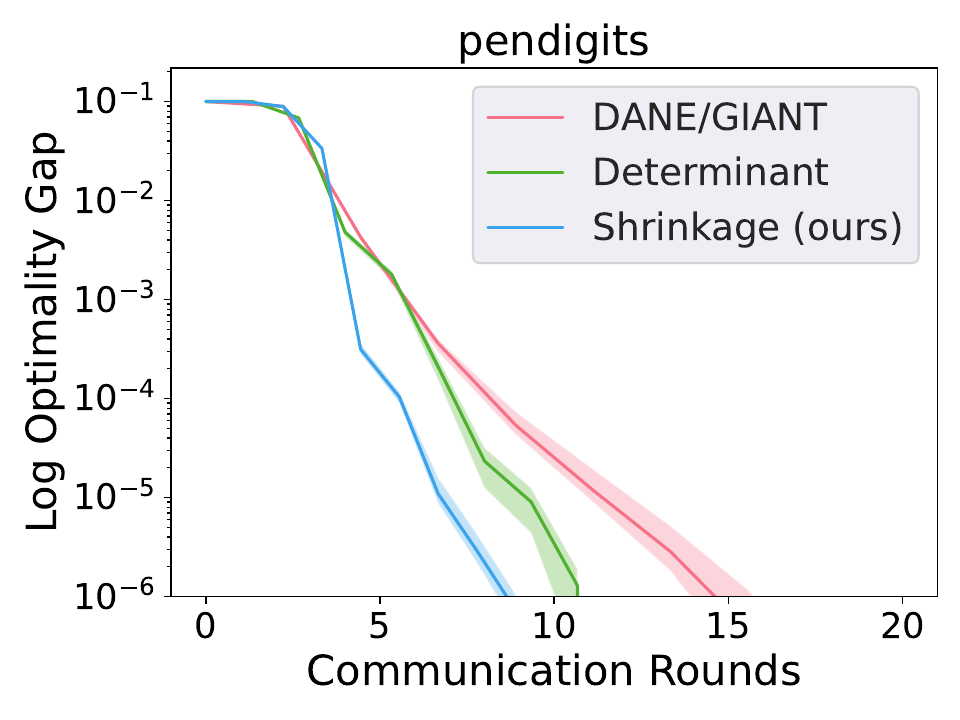}
\end{subfigure}
\caption{Experiments with real data on distributed Newton's method applied to ridge regression. Line search is used in all methods to determine the step sizes. Number of total samples is rounded down to a multiple of the number of agents and split evenly to each agent. We let $m=100,\lambda=0.1$ for segment, $m=20,\lambda=0.05$ for bodyfat, $m=20,\lambda=0.5$ for eunite2001, $m=100,\lambda=0.01$ for pendigits, where $\lambda$ is the regularization parameter and $m$ denotes the number of agents.}\label{newton-ridge-real}
\end{figure}
Figure \ref{newton-ridge-real} shows simulation results for distributed Newton's method applied to ridge regression. We compare with DANE and GIANT (see Section \ref{prior-work} for a discussion of these two methods). \emph{Determinant} stands for the determinantal averaging method (see also Section \ref{prior-work} for details). Note
DANE and GIANT coincides when regularized quadratic loss is minimized and both methods are simply taking the average of local descent direction as the global step, while the determinant averaging method adds a bias correction. The plots suggest that distributed Newton's method with optimal shrinkage achieves better log optimality gap within fewer communication rounds than other methods, which reveals that our shrinkage method is approximating the Hessian inverse more accurately. The ability of bias correction of determinantal averaging method can also be seen in the plots, but it is less effective compared to our approach.
\vspace{-0.1cm}

\subsection{Experiments on Iterative Hessian Sketch }\label{section5.3}

In this subsection, we consider the Iterative Hessian Sketch paradigm, which can be seen as a stochastic Newton's method and incorporate our optimal shrinkage. This method is discussed in Section \ref{section4}.  We consider the ridge regression problem  defined in Section \ref{quad}. We use line search for the step size. Since the sketching matrix is generated from a random Gaussian ensemble, our required assumptions for the optimal shrinkage formula holds. We use the effective dimension of the sketched data as an approximation. We include additional simulation results with the effective dimension of the true covariance in  Appendix \ref{ihs-append}.

Figure \ref{ihs-plot} presents simulation results for the IHS method and IHS with optimal shrinkage.  Although an inexact effective dimension is used for practicality, Iterative Hessian Sketch equipped with shrinkage still leads to significant speedups, which once more confirms our shrinkage formula's ability for bias correction in Hessian inverse estimation.

\begin{figure}[H]
\centering
\begin{subfigure}{0.48\linewidth}
\includegraphics[width=\linewidth]{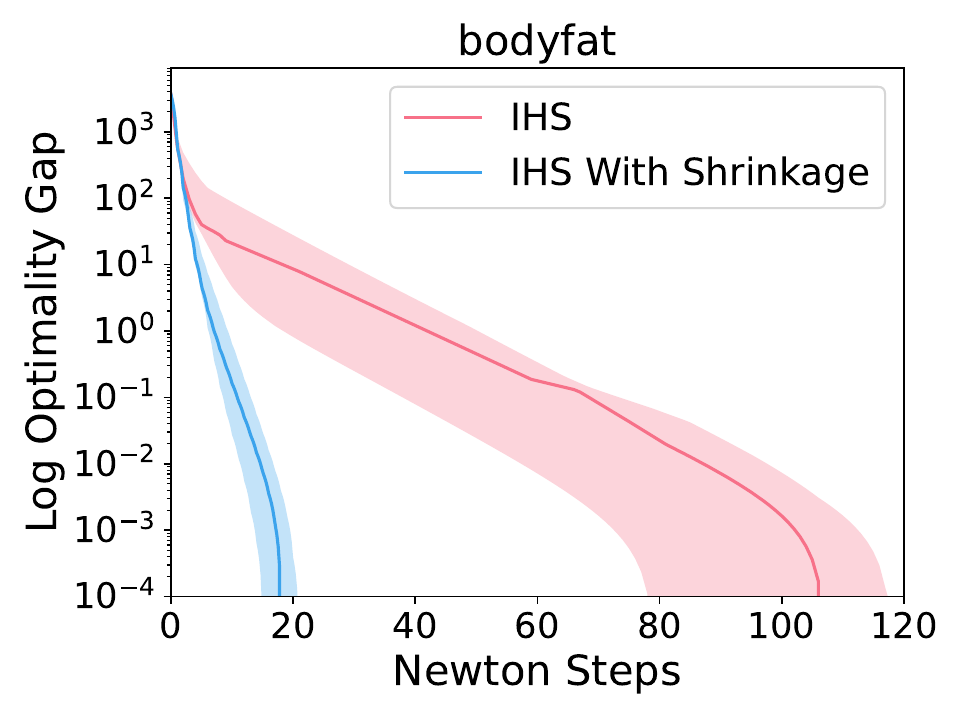}
\end{subfigure}
\begin{subfigure}{0.48\linewidth}
\includegraphics[width=\linewidth]{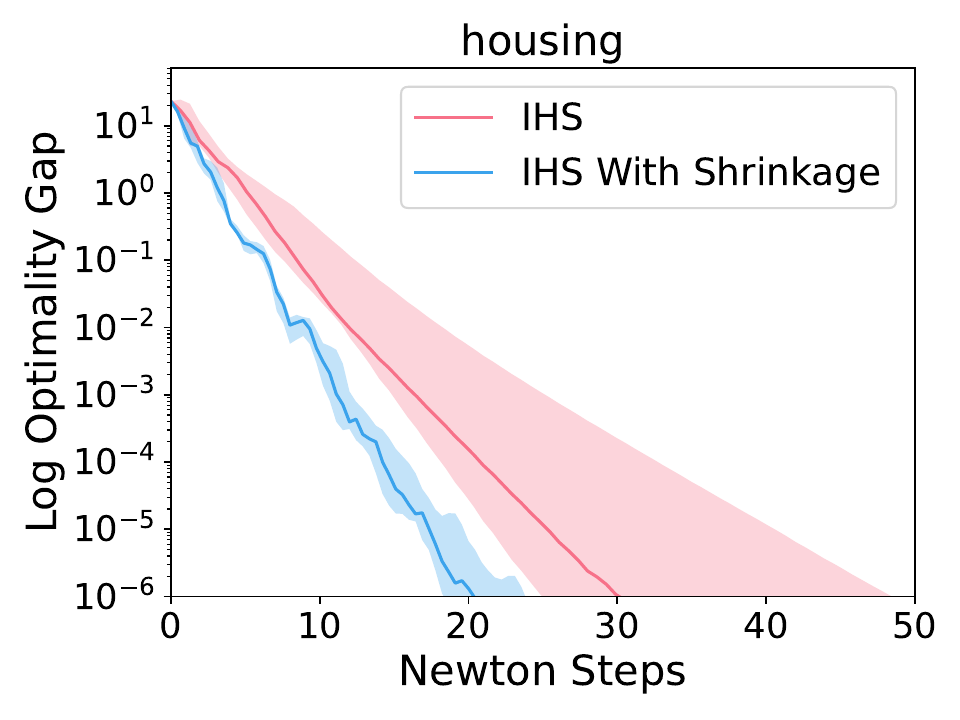}
\end{subfigure}

\begin{subfigure}{0.48\linewidth}
\includegraphics[width=\linewidth]{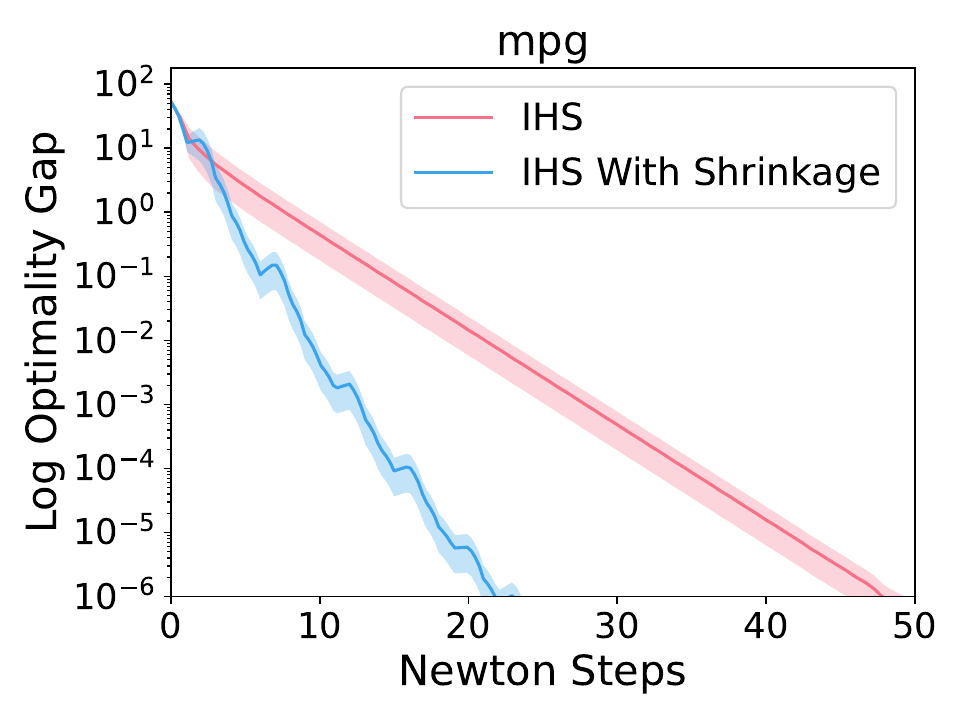}
\end{subfigure}
\begin{subfigure}{0.48\linewidth}
\includegraphics[width=\linewidth]{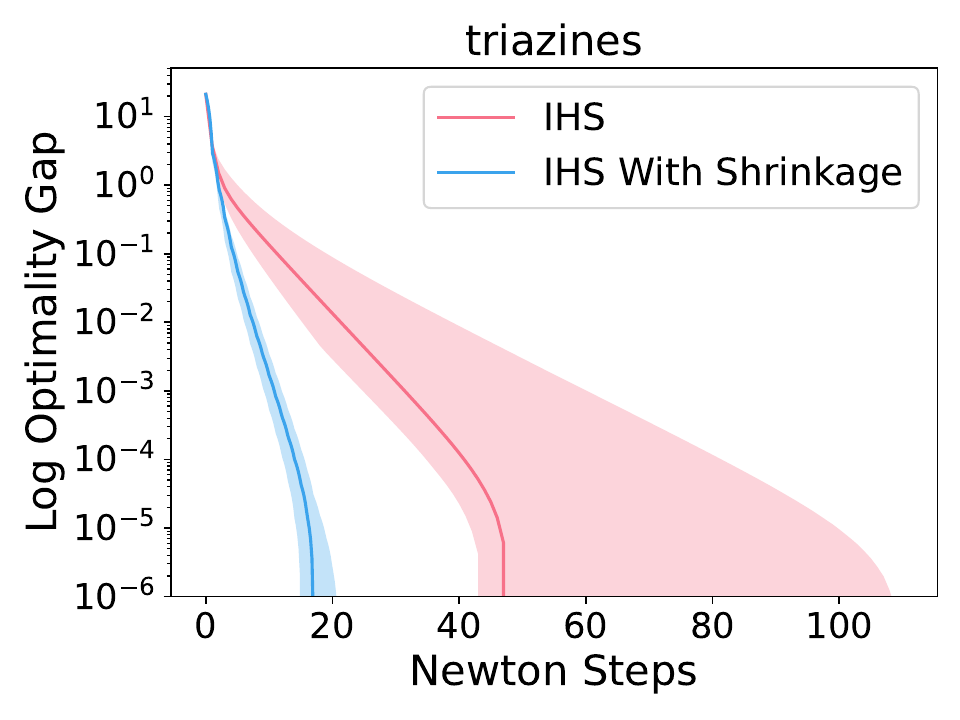}
\end{subfigure}
\caption{Experiments with real data on Iterative Hessian Sketch method applied to ridge regression. Line search is used to determine the step sizes. We let  $\lambda=0.01$ for bodyfat, housing, mpg and $\lambda=0.001$ for triazines; $m=100$ for bodyfat, $m=50$  for housing, $m=30$ for mpg, $m=300$ for triazines where $\lambda$ denotes the regularization parameter and $m$ denotes the sketch size.}\label{ihs-plot}
\end{figure}

\vspace{-0.8cm}
\section{Conclusion}\label{section6}
In this work, we addressed bias correction in distributed second-order optimization and sketching based methods. Specifically, both types of algorithms require accurate estimation of a Hessian inverse. When either data can be modeled random or data sketching is used, this problem amounts to estimation of the resolvent of appropriately defined covariance matrix. We studied an asymptotically unbiased estimator for this resolvent,  characterized its convergence rate, and leveraged it in Hessian inversion bias reduction, where significant convergence speedups are observed in real and synthetic datasets. One limitation of our theory is the need for prior knowledge of the effective dimension. Despite this limitation, we have shown that empirical approximations of the effective dimension yield highly accurate results. There exist other approaches to estimate the effective dimension, including trace estimation methods \cite{meyer2020hutch}, which can be used to further improve our scheme.



\vspace{-0.3cm}
\section*{Acknowledgements}

This work was supported in part by the National Science Foundation (NSF) under Grant ECCS-2037304 and Grant DMS-2134248; in part by the NSF CAREER Award under Grant CCF-2236829; in part by the U.S. Army Research Office Early Career Award under Grant W911NF-21-1-0242; in part by the Stanford Precourt Institute; and in part by the ACCESS—AI Chip Center for Emerging Smart Systems through InnoHK, Hong Kong, SAR.


\bibliography{example_paper}
\bibliographystyle{icml2023}


\newpage
\appendix
\onecolumn
\section{Proofs in Section \ref{section2}}\label{sec2proof}

\subsection{Technical Lemmas}
\begin{lemma}
\label{lemma-a1}
For any $z<0,z\in\mathbb{R}$, if
$F_{\Sigma_n}\rightarrow F_{\Sigma}\mbox{ almost everywhere}, M<\infty$, $\nu\rightarrow 0$. Then $\lim_{n,d\rightarrow\infty} \frac{1}{n}\mathbb{E}\left[\mbox{tr}\left(I-z\hat\Sigma_n\right)^{-1}\right]$ exists.
\end{lemma}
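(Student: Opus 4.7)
My plan is to recognize the expression $\frac{1}{n}\mathbb{E}[\mathrm{tr}(I - z\hat\Sigma_n)^{-1}]$ as being, up to the factor $d/n$, the expected Stieltjes transform of the empirical spectral distribution of $\hat\Sigma_n$ evaluated at the real point $\zeta = 1/z < 0$, and then to invoke the generalized Marchenko--Pastur / Silverstein theorem for sample covariance matrices with a general population covariance to establish convergence.

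First I would rewrite the trace as
\[
\tfrac{1}{n}\mathrm{tr}\bigl(I - z\hat\Sigma_n\bigr)^{-1} = -\tfrac{1}{z}\cdot\tfrac{d}{n}\cdot\underbrace{\tfrac{1}{d}\mathrm{tr}\bigl(\hat\Sigma_n - \tfrac{1}{z} I\bigr)^{-1}}_{=:\, m_{\hat\Sigma_n}(1/z)},
\]
so that, since $d/n \to y \in [0,1)$ under the standing Kolmogorov scaling, convergence of the left-hand side reduces to convergence of the expected Stieltjes transform $\mathbb{E}[m_{\hat\Sigma_n}(1/z)]$. The point $1/z$ is real and strictly negative, hence bounded away from the non-negative support of the e.s.d.\ of $\hat\Sigma_n$, so evaluation of the Stieltjes transform there is well-behaved.

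Second, I would invoke the generalized Marchenko--Pastur theorem from asymptotic random matrix theory (see, e.g., Yao--Zheng--Bai \cite{multi_stats}, Chapter 3). Under the three assumptions $F_{\Sigma_n}\to F_\Sigma$ almost everywhere, $M<\infty$, and $\nu\to 0$, this theorem asserts that $m_{\hat\Sigma_n}(\zeta)$ converges almost surely to a deterministic limit $m(\zeta)$ for each $\zeta$ off the support of the limiting spectral distribution, where $m(\zeta)$ is characterized by the Silverstein fixed-point equation involving $y$ and $F_\Sigma$. The hypothesis $\nu\to 0$ is exactly the concentration-of-quadratic-forms condition that drives this convergence, while $M<\infty$ supplies the moment control needed for the tightness arguments.

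Finally, to pass from almost sure convergence to convergence in expectation, I would observe that for $z<0$ the matrix $I - z\hat\Sigma_n = I + |z|\hat\Sigma_n$ is positive definite with all eigenvalues at least $1$, so $(I-z\hat\Sigma_n)^{-1}$ has operator norm at most $1$ and the random scalar $\frac{1}{n}\mathrm{tr}(I-z\hat\Sigma_n)^{-1}$ is deterministically bounded by $d/n$, itself eventually bounded. Bounded convergence then upgrades the almost sure limit to a limit in expectation, yielding
\[
\lim_{n,d\to\infty}\tfrac{1}{n}\mathbb{E}\bigl[\mathrm{tr}(I - z\hat\Sigma_n)^{-1}\bigr] = -\tfrac{y}{z}\,m(1/z),
\]
which in particular exists. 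I expect the main obstacle to lie in the second step: carefully matching the hypotheses of Assumption \ref{ass_1} to the precise form of the Marchenko--Pastur / Silverstein theorem used in the reference, since different sources formulate the moment and quadratic-form concentration conditions somewhat differently. Once this correspondence is established, the remaining work (steps one and three) is routine.
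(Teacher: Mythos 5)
Your approach is genuinely different from the paper's, and the difference matters. The paper does \emph{not} invoke a Marchenko--Pastur/Silverstein theorem giving almost-sure (or in-probability) convergence of the empirical Stieltjes transform. Instead, it proves a \emph{deterministic equivalent in expectation}: using Theorem 3.1 of \cite{multi_stats} it shows $\mathbb{E}\bigl[(I-z\hat\Sigma_n)^{-1}\bigr]=(I-zs_0(z)\Sigma_n)^{-1}+o(1)$ where $s_0(z)$ is itself defined through the expected trace, then takes traces to get a per-$n$ fixed-point relation $e_n-f(e_n)\to 0$, and finally establishes that $\lim e_n$ exists by a separate monotonicity/contradiction argument exploiting that $f$ is monotone decreasing (this is the content of the lemma and is not free).

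The gap in your proposal is precisely in the second step. You invoke a ``generalized Marchenko--Pastur theorem'' asserting that $m_{\hat\Sigma_n}(\zeta)$ converges almost surely to a \emph{fixed} deterministic limit $m(\zeta)$ under Assumption \ref{ass_1}. But Assumption \ref{ass_1} is framed in the deterministic-equivalent style ($M<\infty$, $\nu\to 0$ as a quadratic-form concentration condition) rather than the classical Bai--Silverstein setup $x=\Sigma_n^{1/2}z$ with $z$ having i.i.d.\ entries. What this machinery delivers directly is closeness of the expected resolvent to a deterministic equivalent that \emph{depends on $n$} through $\Sigma_n$ and $d/n$. That the resulting deterministic sequence itself converges is exactly what Lemma~\ref{lemma-a1} asserts, and your invocation assumes it away. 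In short, you are begging the question: the existence of the limit $m(\zeta)$ is the thing to be proved, not a hypothesis you can import from the standard MP theorem unless you first check that the paper's weaker, abstract data assumptions satisfy the hypotheses of some specific published MP-type result (you flag this caveat yourself, which is to your credit, but you do not close it, and closing it is not ``routine'' here).

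Your first and third steps are fine: the algebraic rewrite $\tfrac{1}{n}\mathrm{tr}(I-z\hat\Sigma_n)^{-1}=-\tfrac{1}{z}\cdot\tfrac{d}{n}\cdot m_{\hat\Sigma_n}(1/z)$ is correct, and since $z<0$ makes $I-z\hat\Sigma_n\succeq I$, the random trace is uniformly bounded by $d/n$, so bounded convergence would upgrade in-probability (or a.s.) convergence to convergence of expectations. But without a rigorous justification of the middle step, the argument is incomplete. To repair it, you would either need to (i) verify that a published MP theorem covers Assumption \ref{ass_1} verbatim, or (ii) replicate the paper's strategy: establish the per-$n$ deterministic equivalent, pass to traces, and prove convergence of the resulting implicitly defined sequence $e_n$ via a fixed-point/monotonicity argument.
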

\begin{proof}
Denote $\hat\Sigma_{n}'=\hat\Sigma_n -\frac{1}{n}x_nx_n^T, \psi_n=\frac{1}{n}x_n^T(I-z\hat\Sigma_n)^{-1}x_n.$ By $y<1$, without loss of generality consider the regime $d<n$ for all $n$. Denote $s_0(z)=1-\frac{d}{n}+\frac{1}{n}\mathbb{E}\left[\mbox{tr}\left(I-z\hat\Sigma_n\right)^{-1}\right]$, then $s_0(z)\in (0,1]$. Follow Theorem 3.1 in book \cite{multi_stats}, 
\[\left(I-zs_0(z)\Sigma_n\right)\mathbb{E}\left[\left(I-z\hat \Sigma_n\right)^{-1}\right]=I+\Omega\]
where 
\begin{equation*}
\begin{aligned}
&\Omega=z^2\mathbb{E}\left[\left(I-z\hat\Sigma_n'\right)^{-1}x_nx_n^T\left(\psi_n-\mathbb{E}[\psi_n]\right)\right]+zs_0(z)\mathbb{E}\left[\left(I-z\hat\Sigma_n'\right)^{-1}-\left(I-z\hat\Sigma_n\right)^{-1}\right]\Sigma_n\\
&\mbox{and}\\
&\|\Omega\|\leq \frac{M z^2}{n}+\sqrt{5M}z^2\sqrt{\frac{M^2d^2z^2}{n^3}+\frac{d^2\nu}{n^2}}
\end{aligned}
\end{equation*}
Thus,
\[\mathbb{E}\left[\left(I-z\hat \Sigma_n\right)^{-1}\right]=\left(I-zs_0(z)\Sigma_n\right)^{-1}+\left(I-zs_0(z)\Sigma_n\right)^{-1}\Omega\]
Since $M$ bounded and $\nu$ diminishing,
\[\lim_{n,d\rightarrow\infty} \left\|\mathbb{E}\left[\left(I-z\hat \Sigma_n\right)^{-1}\right]-\left(I-zs_0(z)\Sigma_n\right)^{-1}\right\|=0\]
which indicates
\[\lim_{n,d\rightarrow\infty} \left(\frac{1}{n}\mathbb{E}\left[\mbox{tr}\left(\left(I-z\hat \Sigma_n\right)^{-1}\right)\right]-\frac{1}{n}\mbox{tr}\left(\left(I-zs_0(z)\Sigma_n\right)^{-1}\right)\right)=0\]
Denote $e_n=\frac{1}{n}\mathbb{E}\left[\mbox{tr}\left(\left(I-z\hat \Sigma_n\right)^{-1}\right)\right],z_n=z(1-\frac{d}{n})$,
\begin{equation*}
    \begin{aligned}
\lim_{n,d\rightarrow\infty} \left(e_n-\frac{1}{n}\mbox{tr}\left(\left(I-(z_n+ze_n)\Sigma_n\right)^{-1}\right)\right)&=0
    \end{aligned}
\end{equation*}
Since $F_{\Sigma_n}\rightarrow F_\Sigma$ abd $\frac{d}{n}\rightarrow y$, denote the Stieltjes transform as $m_\mu(z)=\int_R \frac{1}{x-z}\mu(dx)$, define 
\[f(e_n)=-\frac{y}{(z(1-y)+ze_n)}m_\Sigma\left(\frac{1}{z(1-y)+ze_n}\right)\]
with $f(e_n)$ monotone decreasing and
\[\lim_{n,d\rightarrow\infty} \left(e_n-f(e_n)\right)=0\]
which indicates that for any $\epsilon>0,$ there exists $N>0$ such that for any $n_1,n_2>N$, $|(e_{n_1}-f(e_{n_1}))-(e_{n_2}-f(e_{n_2}))|<\epsilon$. Assume $\lim_{n,d\rightarrow\infty}  e_n$ doesn't exist, then there exists $l_1,l_2>N$ and $|e_{l_1}-e_{l_2}|>\epsilon$. Without loss of generality assume $e_{l_1}>e_{l_2}$. But then
\[|(e_{l_1}-f(e_{l_1}))-(e_{l_2}-f(e_{l_2}))|=|(e_{l_1}-e_{l_2})-(f(e_{l_1})-f(e_{l_2}))|>\epsilon\]
Contradiction. Therefore $\lim_{n,d\rightarrow\infty} e_n=\lim_{n,d\rightarrow\infty} \frac{1}{n}\mathbb{E}\left[\mbox{tr}\left(I-z\hat \Sigma_n\right)^{-1}\right]$ exists.
\end{proof}

\begin{lemma}
\label{lemma-a2}
If $F_{\Sigma_n}(u)\rightarrow F_\Sigma(u)$ almost surely for almost every $u>0$, $\lim_{n,d\rightarrow\infty}\frac{d_\lambda^n}{n}$ exists for any $\lambda>0$.
\end{lemma}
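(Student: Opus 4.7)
The plan is to reduce the claim to convergence of an integral against the empirical spectral distribution. The first move is the algebraic identity
\[
\frac{d_\lambda^n}{n}
= \frac{1}{n}\,\mathrm{tr}\!\left(\Sigma_n(\Sigma_n+\lambda I)^{-1}\right)
= \frac{d}{n}\cdot\frac{1}{d}\sum_{i=1}^{d}\frac{\lambda_i(\Sigma_n)}{\lambda_i(\Sigma_n)+\lambda}
= \frac{d}{n}\int_0^\infty \frac{u}{u+\lambda}\,dF_{\Sigma_n}(u),
\]
which reduces the task to proving that the integral on the right has a limit as $n,d\to\infty$, since the Kolmogorov scaling already gives $d/n\to y$.

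Next I would observe that the integrand $g(u)=u/(u+\lambda)$ is continuous on $[0,\infty)$ and bounded by $1$ for each fixed $\lambda>0$. Under Assumption~\ref{ass_1}.A3, each $F_{\Sigma_n}$ is supported in the compact interval $[\sigma_{\min},\sigma_{\max}]$, so the sequence of measures is automatically tight and no mass escapes to infinity. The hypothesised pointwise a.e.\ convergence $F_{\Sigma_n}(u)\to F_\Sigma(u)$ therefore upgrades to weak convergence of probability measures on the compact interval, and the Helly--Bray (Portmanteau) theorem gives
\[
\int_0^\infty \frac{u}{u+\lambda}\,dF_{\Sigma_n}(u)
\;\longrightarrow\;
\int_0^\infty \frac{u}{u+\lambda}\,dF_\Sigma(u).
\]
Multiplying by $d/n\to y$ then yields $\lim_{n,d\to\infty}d_\lambda^n/n = y\int_0^\infty u/(u+\lambda)\,dF_\Sigma(u)$, and in particular the limit exists.

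The only subtlety worth checking carefully is that the hypothesis is phrased as convergence at \emph{almost every} $u$, whereas the classical Helly--Bray statement usually asks for convergence at every continuity point of $F_\Sigma$. Since the discontinuity set of any distribution function is at most countable and hence of Lebesgue measure zero, every continuity point of $F_\Sigma$ lies in the full-measure set where the convergence is assumed, so the standard theorem applies without modification. I do not anticipate any other obstacle: beyond the definition of $d_\lambda^n$, Assumption~\ref{ass_1}.A3, and the Kolmogorov scaling, the argument uses only elementary weak-convergence facts.
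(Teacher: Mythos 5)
Your proof is correct and follows essentially the same route as the paper's: you express $d_\lambda^n/n$ as $(d/n)$ times an integral of a bounded continuous test function against $F_{\Sigma_n}$, invoke weak convergence (equivalently, convergence of the Stieltjes transform at $-\lambda$, which is what the paper writes explicitly via $d_\lambda^n/n = d/n - (\lambda/n)\,\mathrm{tr}(\Sigma_n+\lambda I)^{-1}$), and multiply by $d/n\to y$. The only cosmetic difference is that you spell out why A3 gives tightness and why a.e.\ convergence of CDFs suffices for Helly--Bray, a justification the paper leaves implicit.
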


\begin{proof}
\begin{equation*}
\begin{aligned}
\frac{d_\lambda^n}{n}&= \frac{\mbox{tr}(\Sigma_n(\Sigma_n+\lambda I)^{-1})}{n}
\\&=\frac{\mbox{tr}(I-\lambda(\Sigma_n+\lambda I)^{-1})}{n}
\\&=\frac{d}{n}- \frac{\lambda}{n}\mbox{tr}(\Sigma_n+\lambda I)^{-1}
\end{aligned}
\end{equation*}
Denote the Stieltjes transform as $m_\mu(z)=\int_R \frac{1}{x-z}\mu(dx)$. Since $F_{\Sigma_n}\rightarrow F_\Sigma$ almost everywhere, $\lim_{n,d\rightarrow\infty} \frac{1}{n}\mbox{tr}(\Sigma_n+\lambda I)^{-1}$ exists and 
\[\lim_{n,d\rightarrow\infty} \frac{1}{n}\mbox{tr}(\Sigma_n+\lambda I)^{-1}=y m_{F_{\Sigma}}(-\lambda)\in (0,\frac{y}{\lambda}]\]
Thus
\[\lim_{n,d\rightarrow\infty}\frac{d_\lambda^n}{n}=y-\lambda y m_{F_{\Sigma}}(-\lambda)\in [0,y)\mbox{ exists}\] 
\end{proof}

\begin{lemma}
\label{lemma-a3}
If $y>0,$ $F_{\Sigma_n}(u)\rightarrow F_\Sigma(u)$ almost surely for almost every $u>0$, eigenvalues of each $\Sigma_n$ are located on a segment $[\sigma_{\min},\sigma_{\max}]$. For any $z<0,s>0,z,s\in \mathbb{R},$  the fixed point equation in $s$
\begin{equation}\label{proof_eq01}
s=1+\lim_{n,d\rightarrow\infty} \frac{1}{n}\mbox{tr}\left(zs\Sigma_n\left(I-zs\Sigma_n\right)^{-1}\right)
\end{equation}
has at most one non-negative real solution.
\end{lemma}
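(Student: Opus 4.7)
The plan is to convert the right-hand side into an explicit, smooth function of $s$ via the limiting spectral distribution, show that this function is strictly decreasing in $s$, and then deduce uniqueness of the non-negative fixed point from strict monotonicity.

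First, for $s\ge 0$ and $z<0$ one has $zs\le 0$, hence $1-zs\lambda\ge 1$ for every eigenvalue $\lambda$ of $\Sigma_n$. Writing
\[\frac{1}{n}\mathrm{tr}\bigl(zs\Sigma_n(I-zs\Sigma_n)^{-1}\bigr)=\frac{d}{n}\int\frac{zsu}{1-zsu}\,dF_{\Sigma_n}(u),\]
we observe that the integrand $u\mapsto \frac{zsu}{1-zsu}$ is continuous and bounded on $[\sigma_{\min},\sigma_{\max}]$. Since all $F_{\Sigma_n}$ and $F_\Sigma$ are supported in this common compact interval and $F_{\Sigma_n}\to F_\Sigma$ almost everywhere, this is weak convergence of probability measures on a compact set, so by the portmanteau theorem the integral against $F_{\Sigma_n}$ converges to the one against $F_\Sigma$. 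Combined with $d/n\to y$, the limit exists for every $s\ge 0$ and equals
\[g(s):=1+y\int\frac{zsu}{1-zsu}\,dF_\Sigma(u).\]

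Next, I would differentiate under the integral, which is justified because for $s\ge 0$ the denominator $(1-zsu)^2$ is bounded away from $0$ uniformly in $u\in[\sigma_{\min},\sigma_{\max}]$, giving
\[g'(s)=y\int\frac{zu}{(1-zsu)^2}\,dF_\Sigma(u).\]
Since $z<0$, $u\in[\sigma_{\min},\sigma_{\max}]$ with $\sigma_{\min}>0$, and $y>0$, the integrand is strictly negative on the support of $F_\Sigma$, so $g'(s)<0$ for every $s\ge 0$. Therefore $h(s):=g(s)-s$ satisfies $h'(s)=g'(s)-1<-1$ on $[0,\infty)$, so $h$ is strictly decreasing there and has at most one zero. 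This zero is precisely a non-negative solution of the fixed-point equation (\ref{proof_eq01}), completing the proof.

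The only potentially delicate step is the passage from $F_{\Sigma_n}\to F_\Sigma$ almost everywhere to convergence of the integrals, but the uniform boundedness of the supports and the continuity of the integrand make this routine. Everything else is elementary calculus: one could also note for context that $h(0)=1>0$ and $g(s)\to 1-y$ as $s\to\infty$, so $h(s)\to-\infty$, which means the unique solution in fact exists in $[0,\infty)$, although the lemma only asserts the weaker uniqueness statement.
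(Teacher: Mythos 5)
Your proof is correct, and it takes a genuinely different route from the paper's. You pass to the limiting spectral measure $F_\Sigma$, define $g(s)=1+y\int\frac{zsu}{1-zsu}\,dF_\Sigma(u)$ as the right-hand side of \eqref{proof_eq01}, and show $g'(s)<0$ for all $s\ge 0$ by differentiating under the integral, so that $h(s)=g(s)-s$ has $h'(s)<-1$ and hence at most one zero. The paper instead argues by contradiction: it supposes two distinct positive solutions $s_1>s_2$, writes \eqref{proof_eq01} for each using $\mathrm{tr}(zs\Sigma(I-zs\Sigma)^{-1})=-d+\mathrm{tr}(I-zs\Sigma)^{-1}$, subtracts, applies the resolvent identity to turn the difference of traces into $z(s_1-s_2)\,\mathrm{tr}\bigl(\Sigma_n(I-zs_1\Sigma_n)^{-1}(I-zs_2\Sigma_n)^{-1}\bigr)/n$, and then bounds this trace away from the required value using the spectral bounds $[\sigma_{\min},\sigma_{\max}]$ to reach a contradiction. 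Both are valid; the resolvent-identity route stays at the prelimit level and avoids differentiating under the integral sign, while your monotonicity argument is more transparent, yields a clean strict-decrease statement, and as you note gives existence of the fixed point for free since $h(0)=1>0$ and $h(s)\to-\infty$. One minor stylistic point: the lemma is stated for $s>0$ with the conclusion ``at most one non-negative solution''; your argument in fact handles $s\ge 0$ uniformly, which is the stronger and cleaner form.
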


\begin{proof}
First note zero is not a solution to (\ref{proof_eq01}). Assume (\ref{proof_eq01}) has two positive real solutions $s_1$ and $s_2$ such that $s_1\neq s_2$. Without loss of generality assume $s_1>s_2$. Then 
\[s_1=1-y+\lim_{n,d\rightarrow\infty} \frac{\mbox{tr}(I-zs_1\Sigma_n)^{-1}}{n}=1-y+\lim_{n,d\rightarrow\infty} \frac{\mbox{tr}(I-zs_2\Sigma_n)^{-1}}{n}=s_2\]
Therefore,
\[\lim_{n,d\rightarrow\infty} \frac{1}{n}\left(\mbox{tr}(I-zs_1\Sigma_n)^{-1}-\mbox{tr}(I-zs_2\Sigma_n)^{-1}\right)=0\]
Apply resolvent identity, 
\[\lim_{n,d\rightarrow\infty} \frac{z(s_1-s_2)}{n}\mbox{tr}\left(\Sigma_n(I-zs_2\Sigma_n)^{-1}(I-zs_1\Sigma_n)^{-1}\right)=0\]
Equivalently,
\[\frac{1}{z}\left(\frac{1}{s_2}-\frac{1}{s_1}\right)\lim_{n,d\rightarrow\infty} \frac{1}{n}\sum_{i=1}^{d} \frac{\sigma_{ni}}{\left(\sigma_{ni}-\frac{1}{zs_2}\right)\left(\sigma_{ni}-\frac{1}{zs_1}\right)}=0\]
where $\sigma_{ni}$ is the $i$th eigenvalue of $\Sigma_n$. But
\begin{equation*}
    \begin{aligned}
\frac{1}{z}\left(\frac{1}{s_2}-\frac{1}{s_1}\right)\lim_{n,d\rightarrow\infty} \frac{1}{n}\sum_{i=1}^{d} \frac{\sigma_{ni}}{\left(\sigma_{ni}-\frac{1}{zs_2}\right)\left(\sigma_{ni}-\frac{1}{zs_1}\right)}\leq \frac{y}{z}\left(\frac{1}{s_2}-\frac{1}{s_1}\right)\frac{\sigma_{\min}}{\left(\sigma_{\max}-\frac{1}{zs_2}\right)\left(\sigma_{\max}-\frac{1}{zs_1}\right)}
\end{aligned}
\end{equation*}
Thus, $s_1=s_2.$
\end{proof}

\begin{lemma} \label{lemma-a4}
With $\Sigma_n=I$. Fix a small $\omega\in(0,1)$. Define the domain $\mathcal{D}=\{z=u+vi\in\mathbb{C}:\sqrt{y}+\frac{1}{\sqrt{y}}-2+|\frac{u}{\sqrt{y}}|\leq\omega^{-1},d^{\omega-1}\leq \frac{v}{\sqrt{y}}\leq \omega^{-1},|z|\geq\omega\sqrt{y}\}.$ Then
\[\left|\mathbb{E}\left[\frac{1}{n}\mbox{tr}(\hat\Sigma_n-zI)^{-1}\right]-\lim_{n,d\rightarrow\infty}\mathbb{E}\left[\frac{1}{n}\mbox{tr}(\hat\Sigma_n-zI)^{-1}\right]\right|\leq \frac{1}{v\sqrt{n d}}\]
for any $z\in\mathcal{D}$.
\end{lemma}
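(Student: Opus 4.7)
The plan is to establish this as a (near-)optimal bulk local Marchenko-Pastur bound for the isotropic gram matrix $\hat\Sigma_n = \frac{1}{n}X^TX$, using the classical self-consistent equation approach adapted to the domain $\mathcal{D}$. Let $m_n(z) = \mathbb{E}\bigl[\frac{1}{n}\mathrm{tr}(\hat\Sigma_n - zI)^{-1}\bigr]$ and let $m(z)$ denote its $n,d\to\infty$ limit, which by Lemma \ref{lemma-a1} (applied with $\Sigma_n = I$) exists and coincides with the companion Stieltjes transform of the Marchenko-Pastur law, satisfying
\[
m(z) = \frac{1}{1 - y - z - zy\,m(z)}.
\]
The domain $\mathcal{D}$ is chosen exactly so that $z$ stays a macroscopic distance (in terms of $v = \mathrm{Im}\,z$) away from the MP spectral edges $(1\pm\sqrt{y})^2$, which guarantees that the self-consistent equation is stable.

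First I would use the Schur complement / Sherman–Morrison identity to write, for each column $x_i$ of $X$,
\[
\bigl[(\hat\Sigma_n - zI)^{-1}\bigr]_{ii} = \frac{1}{-z\bigl(1 + \tfrac{1}{n}x_i^T G^{(i)}(z) x_i\bigr)},
\]
where $G^{(i)}(z) = (\hat\Sigma_n^{(i)} - zI)^{-1}$ and $\hat\Sigma_n^{(i)}$ is the gram matrix with $x_i$ removed. Then I would replace the quadratic form $\tfrac{1}{n}x_i^T G^{(i)}(z) x_i$ by $\tfrac{1}{n}\mathrm{tr}\,G^{(i)}(z)$ using a Hanson–Wright / Marchenko–Pastur-style concentration bound, which costs $O(1/(v\sqrt{n}))$ in expectation for each $i$, and then use rank-one interlacing to replace $\mathrm{tr}\,G^{(i)}$ with $\mathrm{tr}(\hat\Sigma_n - zI)^{-1}$ at cost $O(1/v)$ per exchange. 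Summing over $i$ and taking expectations yields a \emph{perturbed} self-consistent equation
\[
m_n(z) = \frac{1}{1 - y_n - z - z y_n m_n(z)} + \varepsilon_n(z), \qquad y_n = d/n,
\]
with $|\varepsilon_n(z)| \lesssim \tfrac{1}{v\sqrt{nd}}$ after exploiting the extra $1/\sqrt{d}$ that arises because the variance of the quadratic form is $O(1/d)$ in the isotropic case (every coordinate contributes independently).

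Next I would use stability of the MP self-consistent equation on $\mathcal{D}$. Writing $\Phi_z(m) = m\bigl(1 - y - z - zym\bigr) - 1$, the constraint $\sqrt{y}+1/\sqrt{y}-2+|u/\sqrt{y}| \le \omega^{-1}$ together with $v \ge \sqrt{y}\,d^{\omega-1}$ ensures $|\Phi_z'(m(z))|$ is bounded away from zero uniformly on $\mathcal{D}$ (this is precisely the classical "bulk" condition used in local MP laws, see e.g. the square-root decay of the MP density away from edges). A quantitative implicit function argument then converts the equation error $\varepsilon_n(z)$ into the solution error $|m_n(z) - m(z)| \lesssim |\varepsilon_n(z)| \lesssim \tfrac{1}{v\sqrt{nd}}$, which is the claimed bound once constants are absorbed (or else the argument yields the bound up to the stated constant after a careful calculation).

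The main obstacle will be the bookkeeping of the concentration step: one must get both the correct $1/v$ dependence (which is sharp because of the $(\hat\Sigma_n^{(i)} - zI)^{-1}$ norms) and the $1/\sqrt{nd}$ factor (which requires using that $\frac{1}{n}\mathrm{Var}(x_i^T G^{(i)} x_i) \le \tfrac{C}{n^2 v^2}\|G^{(i)}\|_{HS}^2 \lesssim \tfrac{d}{n^2 v^4}$ and then invoking the averaging over the $d$ diagonal entries to win an extra $\sqrt{d}$). Everywhere else the argument is standard provided the stability constant of the self-consistent equation is controlled uniformly on $\mathcal{D}$, which is exactly what the definition of $\mathcal{D}$ arranges through the edge-avoidance condition $\sqrt{y}+1/\sqrt{y}-2+|u/\sqrt{y}|\le \omega^{-1}$ and the spectral lower bound $|z|\ge \omega\sqrt{y}$.
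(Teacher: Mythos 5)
The paper's own proof of Lemma~\ref{lemma-a4} is a citation proof: it invokes the isotropic local Marchenko--Pastur law of Bloemendal, Erd\H{o}s, Knowles, Yau and Yin (Theorem~2.4 of the cited reference), which already gives $\left|\tfrac{1}{d}\mathrm{tr}(\hat\Sigma_n-zI)^{-1}-m_\phi\right|\prec\tfrac{1}{dv}$ uniformly on $\mathcal{D}$. The only original step in the paper is upgrading stochastic dominance $\prec$ to a bound on the expectation, done by splitting on the low-probability exceptional event and using a deterministic resolvent bound; after rescaling $\tfrac1d$ to $\tfrac1n$ this yields $\tfrac{1}{nv}$, which is stronger than the stated $\tfrac{1}{v\sqrt{nd}}$ since $d\le n$. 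You instead propose to reprove the local law from scratch via the Schur complement, quadratic-form concentration, and stability of the self-consistent equation; that is, to reconstruct the content of the theorem the paper cites.

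There is a genuine gap in how you reach the rate. The quantity being bounded is the \emph{bias} $\left|\mathbb{E}[m_n(z)]-m(z)\right|$, not a random fluctuation. Conditioned on $G^{(i)}$, the replacement of $\tfrac{1}{n}x_i^TG^{(i)}x_i$ by $\tfrac{1}{n}\mathrm{tr}\,G^{(i)}$ is exact in mean, so it does not ``cost $O(1/(v\sqrt n))$ in expectation.'' The leading error enters at second Taylor order in the nonlinear Schur-complement formula and is of the size of the \emph{variance}, $\mathrm{Var}\bigl(\tfrac1n x_i^TG^{(i)}x_i\bigr)\lesssim\tfrac{1}{n^2}\|G^{(i)}\|_{HS}^2\lesssim\tfrac{d}{n^2v^2}$ per index (note also that your written bound carries a spurious extra $v^{-2}$: the $v$-dependence is already inside $\|G^{(i)}\|_{HS}^2\le d/v^2$). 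For the smallest admissible $v\asymp\sqrt{y}\,d^{\omega-1}$ in $\mathcal{D}$, this per-index contribution is of order $y\,d^{1-2\omega}$, which diverges for small $\omega$, while the target $\tfrac{1}{v\sqrt{nd}}\asymp d^{-\omega}$ vanishes. The step you gloss as ``averaging over the $d$ diagonal entries to win an extra $\sqrt d$'' is precisely the fluctuation-averaging mechanism, the technically hard heart of EKYY-type local laws, and it is what is needed here to beat the naive per-index Taylor estimate; without it the sketch does not close. The paper sidesteps all of this machinery by invoking the isotropic local law as a black box and only doing the (routine) $\prec$-to-expectation conversion.
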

\begin{proof} 
Lemma \ref{lemma-a4} is a direct corollary from Theorem 2.4 in \cite{bloemendal2013isotropic}. For sake of complementness, we restate the theorem here again together with the derivation of Lemma \ref{lemma-a4}.
\begin{lemma}
(Theorem 2.4 in \cite{bloemendal2013isotropic}) With $\Sigma_n=\frac{1}{\sqrt{y}}I$. Fix a small $\omega\in(0,1)$. Define the domain $\mathcal{D}=\{z=u+vi\in\mathbb{C}:\sqrt{y}+\frac{1}{\sqrt{y}}-2+|u|\leq\omega^{-1},d^{\omega-1}\leq v\leq \omega^{-1},|z|\geq\omega\}.$ Then
\[\left|\frac{1}{d}\mbox{tr}(\hat\Sigma_n-zI)^{-1}-m_\phi\right|\prec \frac{1}{d v}\]

uniformly for $z\in\mathcal{D}$, where $\prec$ denote stochastic dominance and $m_\phi$ is Stieltjes transform of the Marchenko-Pastur law with variance $\frac{1}{\sqrt{y}}$. 
\end{lemma}
Note given $\left|\frac{1}{d}\mbox{tr}(\hat\Sigma_n-zI)^{-1}-m_\phi\right|\prec \frac{1}{d v}$, since $\frac{1}{d}\mbox{tr}(\hat\Sigma_n-zI)^{-1} \leq z$ for each $n$, by dominated convergence theorem, we can bound
\begin{equation*}
    \begin{aligned}
    &\left|\mathbb{E}\left[\frac{1}{d}\mbox{tr}(\hat\Sigma_n-z I)^{-1}\right]-\lim_{n,d\rightarrow\infty}\mathbb{E}\left[\frac{1}{d}\mbox{tr}(\hat\Sigma_n-zI)^{-1}\right]\right|=\left|\mathbb{E}\left[\frac{1}{d}\mbox{tr}(\hat\Sigma_n-z I)^{-1}\right]-\mathbb{E}[m_\phi]\right|\\
    &\leq \mathbb{E}\left[\left|\frac{1}{d}\mbox{tr}(\hat\Sigma_n-zI)^{-1}-m_\phi\right|\right]\\
    &< 2z d^{-D} + \frac{d^\epsilon}{d v}(1-d^{-D})
    \end{aligned}
\end{equation*}
for any $\epsilon>0, D>0$. Since $\epsilon$ can be taken arbitrarily close to 0 and D can be taken arbitrarily large, 
\begin{equation*}
    \begin{aligned}
    \left|\mathbb{E}\left[\frac{1}{d}\mbox{tr}(\hat\Sigma_n-z I)^{-1}\right]-\lim_{n,d\rightarrow\infty}\mathbb{E}\left[\frac{1}{d}\mbox{tr}(\hat\Sigma_n-zI)^{-1}\right]\right|\leq \frac{1}{d v}
    \end{aligned}
\end{equation*}

With some algebra, we get Lemma \ref{lemma-a4}
\end{proof}

\begin{lemma}\label{lemma-a6}
Under Assumption \ref{ass_1}, for any $z>0, \epsilon_n>0$, if
\[\left|\mathbb{E}\left[\frac{\mbox{tr}(\hat\Sigma_n+(z-\epsilon_n i)I)^{-1}}{n}\right]-\lim_{n,d\rightarrow\infty} \mathbb{E}\left[\frac{\mbox{tr}(\hat\Sigma_n+(z-\epsilon_n i)I)^{-1}}{n}\right|\right|\leq \Omega(n,\epsilon_n)\]
then
\[\left|\mathbb{E} \left[\frac{\mbox{tr}(\hat\Sigma_n+z I)^{-1}}{n}\right]-\lim_{n,d\rightarrow\infty} \mathbb{E}\left[\frac{\mbox{tr}(\hat\Sigma_n+z I)^{-1}}{n}\right]\right|\leq \frac{2d|\epsilon_n|}{nz^2}+\Omega(n,\epsilon_n)\]
\end{lemma}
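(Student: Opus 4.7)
The plan is to reduce the bound for the real-argument resolvent to the bound for the complex-shifted resolvent via a triangle inequality, with the slack controlled by the resolvent identity. Write
\[
T_n(w)=\mathbb{E}\!\left[\tfrac{1}{n}\operatorname{tr}(\hat\Sigma_n+wI)^{-1}\right],\qquad T_\infty(w)=\lim_{n,d\to\infty} T_n(w),
\]
and the hypothesis is $|T_n(z-\epsilon_n i)-T_\infty(z-\epsilon_n i)|\le \Omega(n,\epsilon_n)$. The goal is to bound $|T_n(z)-T_\infty(z)|$. By the triangle inequality,
\[
|T_n(z)-T_\infty(z)|\;\le\;|T_n(z)-T_n(z-\epsilon_n i)|\;+\;|T_n(z-\epsilon_n i)-T_\infty(z-\epsilon_n i)|\;+\;|T_\infty(z-\epsilon_n i)-T_\infty(z)|.
\]
The middle piece is $\Omega(n,\epsilon_n)$ by assumption, and the two outer pieces both concern the sensitivity of a (possibly limiting) resolvent trace to a purely imaginary perturbation $\epsilon_n i$ of its argument.

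For the first piece I would apply the resolvent identity $A^{-1}-B^{-1}=A^{-1}(B-A)B^{-1}$ with $A=\hat\Sigma_n+zI$ and $B=\hat\Sigma_n+(z-\epsilon_n i)I$, so that $B-A=-\epsilon_n i\,I$ and therefore
\[
(\hat\Sigma_n+zI)^{-1}-(\hat\Sigma_n+(z-\epsilon_n i)I)^{-1} \;=\; -\epsilon_n i\,(\hat\Sigma_n+zI)^{-1}(\hat\Sigma_n+(z-\epsilon_n i)I)^{-1}.
\]
Since $\hat\Sigma_n\succeq 0$ and $z>0$, the eigenvalues of the first factor have modulus at most $1/z$, and the eigenvalues $1/(\lambda_j+z-\epsilon_n i)$ of the second factor satisfy $|\,\cdot\,|=1/\sqrt{(\lambda_j+z)^2+\epsilon_n^2}\le 1/z$. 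Taking the trace and dividing by $n$ gives $|T_n(z)-T_n(z-\epsilon_n i)|\le d|\epsilon_n|/(nz^2)$ deterministically, hence also in expectation.

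For the third piece I would apply exactly the same deterministic bound at each fixed $n$ and then pass to the limit, yielding $|T_\infty(z-\epsilon_n i)-T_\infty(z)|\le d|\epsilon_n|/(nz^2)$ as well (the existence of $T_\infty$ at these arguments can be invoked from Lemma A.1 / standard Stieltjes-transform facts, since the Stieltjes transform of a probability measure on $[0,\infty)$ extends analytically off $(-\infty,0]$ and the same resolvent-identity estimate transfers). Summing the three pieces yields the claimed bound $2d|\epsilon_n|/(nz^2)+\Omega(n,\epsilon_n)$.

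There is no real obstacle — the argument is a standard "imaginary-part regularization" trick used to pass between bulk bounds proved on the upper half-plane and their real-axis counterparts. The only place that requires a moment of care is being sure the two resolvent factors in the identity are both bounded in norm by $1/z$ uniformly in the spectrum of $\hat\Sigma_n$, which uses positivity of $\hat\Sigma_n$ and $z>0$ in an essential way. Everything else is a direct triangle-inequality bookkeeping.
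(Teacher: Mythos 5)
Your argument is the same as the paper's: a three-term triangle inequality around the complex shift $z-\epsilon_n i$, with the two real–complex mismatch terms each controlled via the resolvent identity and the deterministic operator-norm bound $\|(\hat\Sigma_n+z I)^{-1}\|\le 1/z$, $\|(\hat\Sigma_n+(z-\epsilon_n i)I)^{-1}\|\le 1/z$, giving $\frac{d|\epsilon_n|}{n z^2}$ per term and $\frac{2d|\epsilon_n|}{n z^2}+\Omega(n,\epsilon_n)$ in total. No substantive difference from the paper's proof.
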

\begin{proof}
by resolvent identity,
\begin{equation*}
    \begin{aligned}
    &\left|\mathbb{E}\left[\frac{\mbox{tr}(\hat\Sigma_n+z I)^{-1}}{n}\right]-\mathbb{E}\left[\frac{\mbox{tr}(\hat\Sigma_n+(z-\epsilon_n i)I)^{-1}}{n}\right]\right|
    \\&= \left|\frac{1}{n}\mathbb{E}\left[\mbox{tr}\left((\hat\Sigma_n+z I)^{-1}(-\epsilon_n i I)(\hat\Sigma_n+(z-\epsilon_n i)I)^{-1}\right)\right]\right|\\&\leq \frac{d|\epsilon_n|}{nz^2}
    \end{aligned}
\end{equation*}
Since the above inequality holds for each $n,$
\[\left|\lim_{n,d\rightarrow\infty} \mathbb{E}\left[\frac{\mbox{tr}(\hat\Sigma_n+z I)^{-1}}{n}\right]-\lim_{n,d\rightarrow\infty}  \mathbb{E}\left[\frac{\mbox{tr}(\hat\Sigma_n+(z-\epsilon_n i)I)^{-1}}{n}\right]\right|\leq \frac{|\epsilon_n|d}{nz^2}\]
Thus,
\begin{equation*}
    \begin{aligned}
    &\left|\mathbb{E} \left[\frac{\mbox{tr}(\hat\Sigma_n+z I)^{-1}}{n}\right]-\lim_{n,d\rightarrow\infty} \mathbb{E}\left[\frac{\mbox{tr}(\Sigma_n+z I)^{-1}}{n}\right]\right|
    \\
    \leq &\left|\left(\mathbb{E} \left[\frac{\mbox{tr}(\hat\Sigma_n+z I)^{-1}}{n}\right]-\mathbb{E} \left[\frac{\mbox{tr}(\hat\Sigma_n+(z-\epsilon_n i)I)^{-1}}{n}\right]\right)\right|+\\&\left|\left(\lim_{n,d\rightarrow\infty} \mathbb{E} \left[\frac{\mbox{tr}(\hat\Sigma_n+z I)^{-1}}{n}\right]-\lim_{n,d\rightarrow\infty} \mathbb{E} \left[\frac{\mbox{tr}(\hat\Sigma_n+(z-\epsilon_n i)I)^{-1}}{n}\right]\right)\right|+\\&\left|\left(\mathbb{E} \left[\frac{\mbox{tr}(\hat\Sigma_n+(z-\epsilon_n i)I)^{-1}}{n}\right]-\lim_{n,d\rightarrow\infty} \mathbb{E} \left[\frac{\mbox{tr}(\hat\Sigma_n+(z-\epsilon_n i)I)^{-1}}{n}\right]\right)\right|\\
    \leq & \frac{2d|\epsilon_n|}{n z^2}+\Omega(n,\epsilon_n)
    \end{aligned}
\end{equation*}
\end{proof}

\begin{lemma}
\label{thm1}
Under Assumption \ref{ass_1}, for any $\lambda>0$, denote $\overline{d_\lambda}=\liminf_{n,d\rightarrow\infty} n^{-1}d_\lambda^n$, then
\[\mathbb{E}\left[\left(\frac{1}{1-\overline{d_\lambda}}\hat \Sigma_n+\lambda I\right)^{-1}\right]=(\Sigma_n+\lambda I)^{-1}+\Omega_2\]
where $\|\Omega_2\|\rightarrow 0$ as $n,d\rightarrow\infty, d/n\rightarrow y\in [0,1)$.
\end{lemma}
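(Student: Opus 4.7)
The plan is to reduce the claim to the Bai--Silverstein type resolvent identity that already appears in the proof of Lemma \ref{lemma-a1} and then to pin down the limiting scalar via Lemmas \ref{lemma-a2} and \ref{lemma-a3}. Setting $\gamma=1/(1-\overline{d_\lambda})$ and $z=-\gamma/\lambda<0$, I would first rewrite
\[
(\gamma\hat\Sigma_n+\lambda I)^{-1}=\lambda^{-1}(I-z\hat\Sigma_n)^{-1},\qquad (\Sigma_n+\lambda I)^{-1}=\lambda^{-1}(I+\Sigma_n/\lambda)^{-1},
\]
so after factoring out $\lambda^{-1}$ the task reduces to comparing $\mathbb{E}[(I-z\hat\Sigma_n)^{-1}]$ with $(I+\Sigma_n/\lambda)^{-1}$.

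The next step is to invoke the identity from the proof of Lemma \ref{lemma-a1}: $\mathbb{E}[(I-z\hat\Sigma_n)^{-1}]=(I-zs_0(z)\Sigma_n)^{-1}+(I-zs_0(z)\Sigma_n)^{-1}\Omega$ with $\|\Omega\|\to 0$, where $s_0(z)=1-d/n+n^{-1}\mathbb{E}[\mbox{tr}(I-z\hat\Sigma_n)^{-1}]\in(0,1]$. Because $z<0$ and $\Sigma_n$ has spectrum in $[\sigma_{\min},\sigma_{\max}]$, the factor $(I-zs_0(z)\Sigma_n)^{-1}$ has operator norm uniformly bounded in $n$. A short resolvent manipulation then gives
\[
\bigl\|(I-zs_0(z)\Sigma_n)^{-1}-(I+\Sigma_n/\lambda)^{-1}\bigr\|\leq |zs_0(z)+1/\lambda|\,\sigma_{\max},
\]
so everything reduces to the scalar convergence $zs_0(z)\to -1/\lambda$, equivalently $s_0(z)\to 1-\overline{d_\lambda}$.

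This scalar limit is the heart of the argument. By the proof of Lemma \ref{lemma-a1} the sequence $s_0(z)$ has a non-negative limit $s$, and that limit satisfies the fixed-point equation $s=1-y+y\int(1-zsu)^{-1}dF_\Sigma(u)$; by Lemma \ref{lemma-a3} this equation has at most one non-negative solution. I would then verify that $s=1-\overline{d_\lambda}$ solves it for our choice $z=-1/(\lambda(1-\overline{d_\lambda}))$: with this $z$ one has $1-zsu=1+u/\lambda$, and a direct computation gives
\[
1-y+y\int\frac{\lambda}{\lambda+u}dF_\Sigma(u)=1-y\int\frac{u}{\lambda+u}dF_\Sigma(u)=1-\overline{d_\lambda},
\]
where the last equality is precisely the identification of $\overline{d_\lambda}=\lim_n n^{-1}d_\lambda^n=y\int u(\lambda+u)^{-1}dF_\Sigma(u)$ already carried out in Lemma \ref{lemma-a2}. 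Combined with the resolvent bound above and the $\Omega$-error, and after dividing by $\lambda$, this yields $\Omega_2$ with $\|\Omega_2\|\to 0$.

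The only obstacle I anticipate is the passage to the Stieltjes-transform limit inside the fixed-point equation for $s$: this step relies on $F_{\Sigma_n}\to F_\Sigma$ together with Assumption \ref{ass_1}(A3) to keep the integrand $(1-zsu)^{-1}$ uniformly bounded, which turns the limit exchange into a routine dominated-convergence argument rather than a delicate one. Once the scalar limit is pinned down, the rest of the argument is the purely algebraic assembly above, driven entirely by Lemmas \ref{lemma-a1}--\ref{lemma-a3}.
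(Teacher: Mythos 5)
Your proposal is correct and follows essentially the same route as the paper: it invokes the Bai--Silverstein resolvent identity from Lemma \ref{lemma-a1}, identifies $s_0(z)\to 1-\overline{d_\lambda}$ via the fixed-point equation at $z=-1/(\lambda(1-\overline{d_\lambda}))$ together with the uniqueness Lemma \ref{lemma-a3}, and controls the remaining resolvent difference by the scalar gap $|zs_0(z)+1/\lambda|$ (a streamlined version of the paper's explicit $\Omega'$ term). The only small omission is that Lemma \ref{lemma-a3} is stated under $y>0$, so as the paper does you should note the trivial $y=0$ case separately, though in your integral form of the fixed-point equation it immediately gives $s=1=1-\overline{d_\lambda}$.
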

\begin{proof}
The existence of $\overline{d_\lambda}=\lim_{n,d\rightarrow\infty}\frac{d_\lambda^n}{n}$ has been established in Lemma \ref{lemma-a2}. Take any $z<0,z\in\mathbb{R}$, by Lemma \ref{lemma-a1}, $\lim_{n,d\rightarrow\infty} \mathbb{E}\left[\frac{\mbox{tr}(I-z\hat\Sigma_n)^{-1}}{n}\right]$ exists. Denote $s(z)=1-y+\lim_{n,d\rightarrow\infty} \mathbb{E}\left[\frac{\mbox{tr}(I-z\hat\Sigma_n)^{-1}}{n}\right]$, by Lemma \ref{lemma-a1} again, $s(z)=\lim_{n,d\rightarrow \infty} s_0(z)\in [0,1]$. We can compute 
\begin{equation}\label{eq1}
    \begin{aligned}
(I-zs(z)\Sigma_n)\mathbb{E}\left[(I-z\hat\Sigma_n)^{-1}\right]=I+\Omega+\Omega'
    \end{aligned}
\end{equation}
with $\Omega'=z\left(s_0(z)-s(z)\right)\Sigma_n\mathbb{E}\left[(I-z\hat\Sigma_n)^{-1}\right]$. Thus,
\begin{equation}\label{proof_eq0}
\begin{aligned}
\mathbb{E}\left[(I-z\hat\Sigma_n)^{-1}\right]&=(I-zs(z)\Sigma_n)^{-1}+(I-zs(z)\Sigma_n)^{-1}(\Omega+\Omega')
\end{aligned}
\end{equation}
We can bound 
\begin{equation*}
\begin{aligned}
\|(I-z s(z)\Sigma_n)^{-1}(\Omega+\Omega')\|&\leq \|\Omega+\Omega'\|\\
&\leq \|\Omega\|+\|\Omega'\|\\
&\leq \frac{M z^2}{n}+\sqrt{5M}z^2\sqrt{\frac{M^2d^2z^2}{n^3}+\frac{d^2\nu}{n^2}}+\sigma_{\max}|z||s_0(z)-s(z)|
\end{aligned}
\end{equation*}
Since $M$ stays bounded, $\nu\rightarrow 0$ and $s(z)=\lim_{n,d\rightarrow\infty}s_0(z)$, 
\[\lim_{n,d\rightarrow\infty}\left\|\mathbb{E}\left[(I-z\hat\Sigma_n)^{-1}\right]-(I-zs(z)\Sigma_n)^{-1}\right\|=0\] 
which indicates 
\[\lim_{n,d\rightarrow\infty}\left(\frac{1}{n}\mathbb{E}\left[\mbox{tr}(I-z\hat\Sigma_n)^{-1}\right]-\frac{1}{n}\mbox{tr}(I-zs(z)\Sigma_n)^{-1}\right)=0\]
by Lemma \ref{lemma-a1}, $\lim_{n,d\rightarrow\infty} \frac{1}{n}\mathbb{E}\left[\mbox{tr}(I-z\hat\Sigma_n)^{-1}\right]$ exists and therefore,
\begin{equation}\label{proof_eq00}
    \begin{aligned}
\lim_{n,d\rightarrow\infty}\left(\frac{1}{n}\mathbb{E}\left[\mbox{tr}(I-z\hat\Sigma_n)^{-1}\right]\right)=\lim_{n,d\rightarrow\infty}\left(\frac{1}{n}\mbox{tr}(I-zs(z)\Sigma_n)^{-1}\right)
    \end{aligned}
\end{equation}
substitute (\ref{proof_eq00}) back into expression for $s(z)$, we get
\begin{equation}\label{proof_eq1}
\begin{aligned}s(z)&=1-y+\lim_{n,d\rightarrow\infty} \frac{\mbox{tr}(I-zs(z)\Sigma_n)^{-1}}{n}\\&=1-\lim_{n,d\rightarrow\infty}\left(\frac{d}{n}-\frac{\mbox{tr}(I-zs(z)\Sigma_n)^{-1}}{n}\right)\\&=1+\lim_{n,d\rightarrow\infty} \frac{\mbox{tr}(zs(z)\Sigma_n(I-zs(z)\Sigma_n)^{-1})}{n}
\end{aligned}
\end{equation}
Set $z=-\frac{1}{\lambda(1-\overline{d_\lambda})}$. Since $\frac{d_\lambda^n}{n}\leq \frac{d}{n}$ for each $n$, $\overline{d_\lambda}\leq y<1$, and therefore $z<0$. Note $s(z)=1-\overline{d_\lambda}$ satisfies (\ref{proof_eq1}). When $y>0$, by Lemma \ref{lemma-a3}, we conclude $s(z)=1-\overline{d_\lambda}$.
When $y=0,\overline{d_\lambda}=0$. We get $s(z)=1+\lim_{n,d\rightarrow\infty} \frac{\mbox{tr}(I-zs(z)\Sigma_n)^{-1}}{n}\geq 1$ from (\ref{proof_eq1}), but since $s(z)\in [0,1]$, $s(z)=1=1-\overline{d_\lambda}$. Thus, $s(z)=1-\overline{d_\lambda}\mbox{ for }y\in [0,1)$.
Substitute $(z=-\frac{1}{\lambda(1-\overline{d_\lambda})},s(z)=1-\overline{d_\lambda})$ back into (\ref{proof_eq0}), we get
\begin{equation*}
\begin{aligned}
\mathbb{E}\left[\left(\frac{1}{1-\overline{d_\lambda}}\hat\Sigma_n+\lambda I\right)^{-1}\right]&=(\Sigma_n+\lambda I)^{-1}+(\Sigma_n+\lambda I)^{-1}(\Omega+\Omega')\\
&=(\Sigma_n+\lambda I)^{-1}+\Omega_2
\end{aligned}
\end{equation*}
with $\|\Omega_2\|=\|(\Sigma_n+\lambda I)^{-1}(\Omega+\Omega')\|$.
Therefore, we can bound
\begin{equation}\label{convergence_bound}
    \begin{aligned}
\|\Omega_2\|&=\|(\Sigma_n+\lambda I)^{-1}(\Omega+\Omega')\|\\&\leq \frac{1}{\lambda}\|\Omega+\Omega'\|\\&\leq \frac{1}{\lambda}\left(\frac{M z^2}{n}+\sqrt{5M}z^2\sqrt{\frac{M^2d^2z^2}{n^3}+\frac{d^2\nu}{n^2}}+\sigma_{\max}|z|\left|s_0(z)-s(z)\right|\right)\\&\leq \frac{1}{\lambda}\left(\frac{M z^2}{n}+\sqrt{5M}z^2\sqrt{\frac{M^2d^2z^2}{n^3}+\frac{d^2\nu}{n^2}}+\sigma_{\max}|z|\left|\left(y-\frac{d}{n}\right)+\left(\mathbb{E}\left[\frac{1}{n}\mbox{tr}\left(I-z\hat\Sigma_n\right)^{-1}\right]-\right .\right.\right.\\&\left.\left.\left.\quad \lim_{n,d\rightarrow\infty}\mathbb{E}\left[\frac{1}{n}\mbox{tr}\left(I-z\hat\Sigma_n\right)^{-1}\right]\right)\right|\right)
    \end{aligned}
\end{equation}
where $z=-\frac{1}{\lambda(1-\overline{d_\lambda})}$. Thus $\|\Omega_2\|\rightarrow 0$ as $n,d\rightarrow\infty$.
\end{proof}

\subsection{Proof of Theorem \ref{thm2}}\label{thm2proof}
\begin{proof}
Theorem \ref{thm2} can be derived as a corollary of Lemma \ref{thm1}. We include its derivation for sake of completeness.  Build on proof of Theorem \ref{thm1}, 
\begin{equation*}
\begin{aligned}
\mathbb{E}\left[\left(\frac{1}{1-\frac{d_\lambda^n}{n}}\hat\Sigma_n+\lambda I\right)^{-1}\right]=(\Sigma_n+\lambda I)^{-1}+\left(\mathbb{E}\left[\left(\frac{1}{1-\frac{d_\lambda^n}{n}}\hat\Sigma_n+\lambda I\right)^{-1}\right] \right.\\\left.-\mathbb{E}\left[\left(\frac{1}{1-\lim_{n,d\rightarrow\infty} \frac{d_\lambda^n}{n}}\hat\Sigma_n+\lambda I\right)^{-1}\right]\right)+\Omega_2
\end{aligned}
\end{equation*}
Denote $\omega_n=\lim_{n,d\rightarrow\infty} \frac{d_\lambda^n}{n}-\frac{d_\lambda^n}{n}$, by resolvent identity, 
\begin{equation*}
\begin{aligned}
&\left\|\mathbb{E}\left[\left(\frac{1}{1-\frac{d_\lambda^n}{n}}\hat\Sigma_n+\lambda I\right)^{-1}\right] -\mathbb{E}\left[\left(\frac{1}{1-\lim_{n,d\rightarrow\infty} \frac{d_\lambda^n}{n}}\hat\Sigma_n+\lambda I\right)^{-1}\right]\right\|
\\&=\left\|\mathbb{E}\left[\left(\frac{1}{1-\frac{d_\lambda^n}{n}}\hat\Sigma_n+\lambda I\right)^{-1}\left(\frac{\omega_n}{\left(1-\lim_{n,d\rightarrow\infty} \frac{d_\lambda^n}{n}\right)\left(1-\frac{d_\lambda^n}{n}\right)}\hat\Sigma_n\right)\left(\frac{1}{1-\lim_{n,d\rightarrow\infty} \frac{d_\lambda^n}{n}}\hat\Sigma_n+\lambda I\right)^{-1}\right]\right\|
\\&\leq \lambda^2 \frac{|\omega_n|}{\left(1-\lim_{n,d\rightarrow\infty} \frac{d_\lambda^n}{n}\right)\left(1-\frac{d_\lambda^n}{n}\right)}\|\mathbb{E}[\hat\Sigma_n]\|\\
&\leq \frac{\lambda^2\sigma_{\max}|\omega_n|}{\left(1-\lim_{n,d\rightarrow\infty} \frac{d_\lambda^n}{n}\right)\left(1-\frac{d_\lambda^n}{n}\right)}
\end{aligned}
\end{equation*}
Since $\lim_{n,d\rightarrow\infty}\frac{d_\lambda^n}{n}\leq \frac{y+1}{2}<1$, therefore,
\begin{equation}\label{convergence_bound2}
\begin{aligned}
\left\|\mathbb{E}\left[\left(\frac{1}{1- \frac{d_\lambda^n}{n}}\hat \Sigma_n+\lambda I\right)^{-1}\right]-(\Sigma_n+\lambda I)^{-1}\right\|\leq \frac{2\sigma_{\max}\lambda^2|\omega_n|}{\left(1-y\right)\left(1-\frac{d_\lambda^n}{n}\right)}+\|\Omega_2\|
\end{aligned}
\end{equation}
with the right-hand side diminishing to 0.
\end{proof}

\subsection{Proof of Theorem \ref{thm2.4}}\label{thm2.4proof}

\begin{proof}
When $\Sigma_n=I$,  $\frac{d_\lambda^n}{n}=\frac{y}{\lambda+1}$ for each $n$ and thus $\overline{d_\lambda}=\frac{y}{\lambda+1}$. Let $z=\lambda(1-\frac{y}{\lambda+1})$ and $\epsilon_n=\frac{1}{\sqrt{n}}$ in Lemma \ref{lemma-a6}, under Assumption \ref{ass_1} and with Lemma \ref{lemma-a4},
\begin{equation*}
\begin{aligned}
&\left|\mathbb{E}\left[\frac{1}{n}\mbox{tr}\left(\hat\Sigma_n+\lambda\left(1-\frac{y}{\lambda+1}\right)I\right)^{-1}\right]-\lim_{n,d\rightarrow\infty}\mathbb{E}\left[\frac{1}{n}\mbox{tr}\left(\hat\Sigma_n+\lambda\left(1-\frac{y}{\lambda+1}\right)I\right)^{-1}\right]\right|\\
&\leq \frac{2d \epsilon_n}{n z^2}+\frac{1}{\epsilon_n\sqrt{nd}}=\frac{2y}{z^2\sqrt{n}}+\frac{1}{\sqrt{d}}
\end{aligned}
\end{equation*}
Since $\|\Sigma_n\|=1$ for each $n$, $\sigma_{\max}=1$ and from (\ref{convergence_bound}), denote $z'=-\frac{1}{z}$,
\begin{equation*}
    \begin{aligned}
    &\|\Omega_2\|\leq \frac{1}{\lambda}\left(\frac{M z'^2}{n}+\sqrt{5M}z'^2\sqrt{\frac{M^2d^2z'^2}{n^3}+\frac{d^2\nu}{n^2}}+\sigma_{\max}|z'|\left|s_0(z')-s(z')\right|\right)\\&\leq \frac{1}{\lambda}\left(\frac{M z'^2}{n}+\sqrt{5M}z'^2\sqrt{\frac{M^2d^2z'^2}{n^3}+\frac{d^2\nu}{n^2}}+|z'|\left|\left(\mathbb{E}\left[\frac{1}{n}\mbox{tr}\left(I-z'\hat\Sigma_n\right)^{-1}\right]-\right .\right.\right.\\&\left.\left.\left.\quad \lim_{n,d\rightarrow\infty}\mathbb{E}\left[\frac{1}{n}\mbox{tr}\left(I-z'\hat\Sigma_n\right)^{-1}\right]\right)\right|\right)\\
    &\leq \frac{1}{\lambda}\left(\frac{M z'^2}{n}+\sqrt{5M}z'^2\sqrt{\frac{M^2d^2z'^2}{n^3}+\frac{d^2\nu}{n^2}}+\frac{1}{\sqrt{n}}\left(2yz'^2+\frac{1}{\sqrt{y}}\right)\right)\\
    &=\frac{1}{\lambda}\left(\frac{Mz'^2}{n}+z'^2y\sqrt{5M}\sqrt{\frac{M^2z'^2}{n}+\nu}+\frac{1}{\sqrt{n}}\left(2yz'^2+\frac{1}{\sqrt{y}}\right)\right)\in\mathcal{O}(\frac{1}{\sqrt{n}}+\sqrt{\nu})
    \end{aligned}
\end{equation*}
In (\ref{convergence_bound2}), since $|\omega_n|=0$, 
\begin{equation*}
\begin{aligned}
\left\|\mathbb{E}\left[\left(\frac{1}{1- \frac{d_\lambda^n}{n}}\hat \Sigma_n+\lambda I\right)^{-1}\right]-(\Sigma_n+\lambda I)^{-1}\right\|\leq \|\Omega_2\|\in\mathcal{O}(\frac{1}{\sqrt{n}}+\sqrt{\nu})
\end{aligned}
\end{equation*}
\end{proof}

\subsection{Proof of Theorem \ref{thm2.5}}\label{thm2.5proof}

\begin{proof}
Assume eigenvalues of $\hat \Sigma_n$ are no less than $\sigma>0$. By resolvent identity,
\begin{equation*}
    \begin{aligned}
    &\left \|\mathbb{E}\left[\left(\frac{1}{1-\frac{d_\epsilon^n}{n}}\hat\Sigma_n+\epsilon I\right)^{-1}\right]-\mathbb{E}\left[\left(\frac{1}{1-\frac{d}{n}}\hat\Sigma_n+\epsilon I\right)^{-1}\right]\right\|\\
    &=\left\|\mathbb{E}\left[\epsilon \left(\frac{1}{1-\frac{d_\epsilon^n}{n}}\hat\Sigma_n+\epsilon I\right)^{-1}\left(\frac{\mbox{tr}(\Sigma_n+\epsilon I)^{-1}}{n(1-\frac{d}{n})(1-\frac{d_\epsilon^n}{n})}\hat\Sigma_n\right)\left(\frac{1}{1-\frac{d}{n}}\hat\Sigma_n+\epsilon I\right)^{-1}\right]\right\|\\
    &\leq \frac{\epsilon}{\left(\frac{\sigma}{1-\frac{d_\epsilon^n}{n}}+\epsilon\right)^2}\left\|\mathbb{E}\left[\frac{\mbox{tr}(\Sigma_n+\epsilon I)^{-1}}{n(1-\frac{d}{n})(1-\frac{d_\epsilon^n}{n})}\hat\Sigma_n\right]\right\|\\
    &\leq \frac{\epsilon}{\left(\frac{\sigma}{1-\frac{d_\epsilon^n}{n}}+\epsilon\right)^2}\cdot \frac{\sigma_{\max}d}{n(1-\frac{d}{n})^2(\sigma_{\min}+\epsilon)}
    \end{aligned}
\end{equation*}
From Theorem \ref{thm2}, we know
\begin{equation*}
    \begin{aligned}
    \mathbb{E}\left[\left(\frac{1}{1-\frac{d}{n}}\hat\Sigma_n+\epsilon I\right)^{-1}\right] = (\Sigma_n+\epsilon I)^{-1}+ \mathbb{E}\left[\left(\frac{1}{1-\frac{d}{n}}\hat\Sigma_n+\epsilon I\right)^{-1}\right]-\mathbb{E}\left[\left(\frac{1}{1-\frac{d_\epsilon^n}{n}}\hat\Sigma+\epsilon I\right)^{-1}\right]+\Omega_0
    \end{aligned}
\end{equation*}   
Therefore,
\begin{equation*}
    \begin{aligned}
\lim_{n,d\rightarrow\infty, \epsilon\rightarrow 0} \left\|\mathbb{E}\left[\left(\frac{1}{1-\frac{d}{n}}\hat\Sigma_n+\epsilon I\right)^{-1}\right]-(\Sigma_n+\epsilon I)^{-1}\right\|=0.
    \end{aligned}
\end{equation*}     
Furthermore, if $\Sigma_n^{-1}$ exists for each $n$, by resolvent identity,
\begin{equation*}
    \begin{aligned}
    \|(\Sigma_n+\epsilon I)^{-1}-\Sigma_n^{-1}\|=\|\epsilon(\Sigma_n+\epsilon I)^{-1}\Sigma_n^{-1}\|\leq \frac{\epsilon}{\sigma_{\min}^2}
    \end{aligned}
\end{equation*}     
Thus,
\begin{equation*}
    \begin{aligned}
    \mathbb{E}\left[\left(\frac{1}{1-\frac{d}{n}}\hat\Sigma_n+\epsilon I\right)^{-1}\right] = \Sigma_n^{-1}+((\Sigma_n+\epsilon I)^{-1}-\Sigma_n^{-1})+ \mathbb{E}\left[\left(\frac{1}{1-\frac{d}{n}}\hat\Sigma_n+\right.\right.\\\left.\left.\epsilon I\right)^{-1}\right]-\mathbb{E}\left[\left(\frac{1}{1-\frac{d_\epsilon^n}{n}}\hat\Sigma_n+\epsilon I\right)^{-1}\right]+\Omega_0
    \end{aligned}
\end{equation*}  
and
\[\lim_{n,d\rightarrow\infty, \epsilon\rightarrow 0} \left\|\mathbb{E}\left[\left(\frac{1}{1-\frac{d}{n}}\hat\Sigma_n+\epsilon I\right)^{-1}\right]-\Sigma_n^{-1}\right\|=0\]
\end{proof}

\subsection{Proof of Theorem \ref{thm2.6}}\label{thm2.6proof}

\begin{proof}
Built on equation (\ref{eq1}) in proof of Lemma \ref{thm1}, for any $z<0,z\in\mathbb{R}$,
\[(I-z\Sigma_n)\mathbb{E}\left[\left(I-z\hat\Sigma_n\right)^{-1}\right]=\left(z s(z)\Sigma_n-z\Sigma_n\right)\mathbb{E}\left[\left(I-z\hat\Sigma_n\right)^{-1}\right]+I+\Omega+\Omega'\]
Take $z=-\frac{1}{\lambda}$, note $z<0$, and thus
\begin{equation*}
\mathbb{E}\left[\left(\hat\Sigma_n+\lambda I\right)^{-1}\right]=\left(\Sigma_n+\lambda I\right)^{-1}+(\Sigma_n+\lambda I)^{-1}\left[\left(1-s\left(-\frac{1}{\lambda}\right)\right)\Sigma_n \mathbb{E}\left[(\hat\Sigma_n+\lambda I)^{-1}\right]+\Omega+\Omega'\right]
\end{equation*}
Therefore,
\begin{equation}\label{avg_lwb_finite}
\begin{aligned}
   \left\|\mathbb{E}\left[\left(\hat\Sigma_n+\lambda I\right)^{-1}\right]-\left(\Sigma_n+\lambda I\right)^{-1}\right\|&=\left\|(\Sigma_n+\lambda I)^{-1}\left(1-s\left(-\frac{1}{\lambda}\right)\right)\Sigma_n\mathbb{E}\left[(\hat \Sigma_n+\lambda I)^{-1}\right]+\Omega_2\right\|\\
   &\geq \frac{1}{\sigma_{\max}+\lambda} \left\|\left(1-s\left(-\frac{1}{\lambda}\right)\right)\Sigma_n \mathbb{E}\left[\left(\hat\Sigma_n+\lambda I\right)^{-1}\right]\right\|-\|\Omega_2\|
\end{aligned}
\end{equation}
Since $\left(1-s\left(-\frac{1}{\lambda}\right)\right)=y-\lambda \lim_{n,d\rightarrow\infty}\mathbb{E}\left[\frac{1}{n}\mbox{tr}\left(\hat\Sigma_n+\lambda I\right)^{-1}\right]$, with $\lambda'$ satisfies $\lambda=\lambda'\left(1-\overline{d_{\lambda'}}\right)$, \footnote{There exists $\lambda'$ satisfying the equation for any $\lambda>0.$ To see this, define $f:\mathbb{R}_+\rightarrow \mathbb{R}_+^\ast$ by $f(\lambda')=\lambda'(1-\overline{d_{\lambda'}})$, note $\lim_{\lambda'\rightarrow 0} f(\lambda')=0$ and  $\lim_{\lambda'\rightarrow +\infty} f(\lambda')=+\infty$, and $f(\lambda')$ is continuous.}
\begin{equation*}
\begin{aligned}
    &\left\|\mathbb{E}\left[\left(\hat\Sigma_n+\lambda I\right)^{-1}\right]-\left(\Sigma_n+\lambda I\right)^{-1}\right\| \\
    & \geq \frac{1}{\sigma_{\max}+\lambda} \left(\left|y-\lambda \lim_{n,d\rightarrow\infty}\mathbb{E}\left[\frac{1}{n}\mbox{tr}\left(\hat\Sigma_n+\lambda I\right)^{-1}\right]\right|\left\|\Sigma_n \mathbb{E}\left[\left(\hat\Sigma_n+\lambda I\right)^{-1}\right]\right\|\right)-\|\Omega_2\|\\
    &\geq \frac{\sigma_{\min}}{\sigma_{\max}+\lambda}\left|y-\lambda \lim_{n,d\rightarrow\infty}\mathbb{E}\left[\frac{1}{n}\mbox{tr}\left(\hat\Sigma_n+\lambda I\right)^{-1}\right]\right|\left\|\mathbb{E}\left[\left(\hat\Sigma_n+\lambda I\right)^{-1}\right]\right\|-\|\Omega_2\|\\
    &\geq \frac{\sigma_{\min}}{\sigma_{\max}+\lambda}\left|y-\lambda \lim_{n,d\rightarrow\infty}\mathbb{E}\left[\frac{1}{n}\mbox{tr}\left(\hat\Sigma_n+\lambda I\right)^{-1}\right]\right|\left\|\frac{\lambda'}{\lambda}\left(\Sigma_n+\lambda' I\right)^{-1}-\left(\frac{\lambda'}{\lambda}(\Sigma_n+\lambda' I)^{-1}-\mathbb{E}\left[\left(\hat\Sigma_n+\lambda I\right)^{-1}\right]\right)\right\|\\&-\|\Omega_2\|\\
    &\geq \frac{\sigma_{\min}}{\sigma_{\max}+\lambda}\left|y-\lambda \lim_{n,d\rightarrow\infty}\mathbb{E}\left[\frac{1}{n}\mbox{tr}\left(\hat\Sigma_n+\lambda I\right)^{-1}\right]\right|\left\|\frac{\lambda'}{\lambda}(\Sigma_n+\lambda' I)^{-1}\right\|\\
    &-\frac{\sigma_{\min}}{\sigma_{\max}+\lambda}\left|y-\lambda \lim_{n,d\rightarrow\infty}\mathbb{E}\left[\frac{1}{n}\mbox{tr}\left(\hat\Sigma_n+\lambda I\right)^{-1}\right]\right|\left\|\frac{\lambda'}{\lambda}(\Sigma_n+\lambda' I)^{-1}-\mathbb{E}\left[\left(\hat\Sigma_n+\lambda I\right)^{-1}\right]\right\|\\
    &-\|\Omega_2\|
\end{aligned}
\end{equation*}
By Lemma \ref{thm1}, $\lim_{n,d\rightarrow\infty}\|\Omega_2\|=0$, and also
\[\lim_{n,d\rightarrow\infty}\left\|\mathbb{E}\left[\left(\hat\Sigma_n+\lambda I\right)^{-1}\right]-\frac{\lambda'}{\lambda}\left(\Sigma_n+\lambda' I\right)^{-1}\right\|=0\]
thus
\[\lim_{n,d\rightarrow\infty} \mathbb{E}\left[\frac{1}{n}\mbox{tr}\left(\hat\Sigma_n+\lambda I\right)^{-1}\right]=\frac{\lambda'}{\lambda}\lim_{n,d\rightarrow\infty} \frac{1}{n}\mbox{tr}\left(\Sigma_n+\lambda' I\right)^{-1}\]
Therefore, since $\left|y-\lambda \lim_{n,d\rightarrow\infty}\mathbb{E}\left[\frac{1}{n}\mbox{tr}\left(\hat\Sigma_n+\lambda I\right)^{-1}\right]\right| \leq y,$ denote the Stieltjes transform as $m_\mu(z)=\int_R \frac{1}{x-z}\mu(dx)$,
\begin{equation}\label{avg_lwb}
\begin{aligned}
\lim_{n,d\rightarrow\infty} \left\|\mathbb{E}\left[\left(\hat\Sigma_n+\lambda I\right)^{-1}\right]-\left(\Sigma_n+\lambda I\right)^{-1}\right\|&\geq \frac{\sigma_{\min}}{\sigma_{\max}+\lambda}\cdot \frac{\lambda'}{\lambda(\sigma_{\max}+\lambda')}\left|y-\lambda' \lim_{n,d\rightarrow\infty} \frac{1}{n}\mbox{tr}\left(\Sigma_n+\lambda' I\right)^{-1}\right|\\
&= \frac{\lambda' \sigma_{\min}}{\lambda(\sigma_{\max}+\lambda)(\sigma_{\max}+\lambda')}\left|y-\lambda' y m_{F_{\Sigma}}(-\lambda')\right|\\
& \geq \frac{\lambda' \sigma_{\min} y}{\lambda(\sigma_{\max}+\lambda)(\sigma_{\max}+\lambda')}\left|\frac{\sigma_{\min}}{\sigma_{\min}+\lambda'}\right|
\end{aligned}
\end{equation}
When $\lambda\in o(1)$, since $1-\overline{d_{\lambda'}}\geq 1-y>0$, thus $\lambda'\in o(1)$ and therefore as $n,d\rightarrow \infty$,
\begin{equation*}
\frac{\lambda' \sigma_{\min} y}{\lambda(\sigma_{\max}+\lambda)(\sigma_{\max}+\lambda')}\left|\frac{\sigma_{\min}}{\sigma_{\min}+\lambda'}\right|\rightarrow \frac{\lambda'\sigma_{\min} y}{\lambda \sigma_{\max}^2}=\frac{\sigma_{\min} y}{(1-\overline{d_{\lambda'}})\sigma_{\max}^2}\geq \frac{\sigma_{\min} y}{\sigma_{\max}^2}
\end{equation*}
\end{proof}
\section{Proofs in Section \ref{section3}}

\subsection{Technical Lemmas}
\begin{lemma}\label{lemmab.1}
If data matrix $A$ is random and satisfies Assumption \ref{ass_1}, with $A_i$ i.i.d. mean zero and covariance $\Sigma$.  Denote $\tilde p_t$ as the result point of preconditioned conjugate gradient method, $p_t^\star$ as the true Newton's step. Then,
\[\frac{\|\tilde p_t-p_t^\star\|_H^2}{\|p_t^\star\|_H^2}\leq 4\left(\frac{1-\sqrt{1-\frac{\alpha}{1-\alpha}}}{1+\sqrt{1-\frac{\alpha}{1-\alpha}}}\right)^{s_t}\]
where $\alpha=(\sigma_{\max}+\lambda+\alpha_0)\left(\frac{1}{\lambda^2}\alpha_0+\alpha_1+\|\Omega_0\|\right)$ with $\alpha_0=\|\Sigma-\frac{1}{n}A^TA\|,$  $\alpha_1=\|\tilde H^{-1}-\mathbb{E}[\tilde H^{-1}]\|$ and $\sigma_{\max}$ being the largest eigenvalues of $\Sigma$. $s_t$ denotes the number of iterations in preconditioned conjugate gradient method.
\end{lemma}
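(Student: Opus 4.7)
The plan is to reduce this lemma to the classical convergence guarantee of preconditioned conjugate gradient (PCG), which states that the $H$-norm error after $s_t$ iterations of PCG with preconditioner $\tilde H^{-1}$ applied to a linear system in $H$ satisfies
\[\frac{\|\tilde p_t-p_t^\star\|_H}{\|p_t^\star\|_H}\le 2\left(\frac{\sqrt{\kappa}-1}{\sqrt{\kappa}+1}\right)^{s_t},\]
where $\kappa$ is the spectral condition number of the symmetric preconditioned operator $\tilde H^{-1/2}H\tilde H^{-1/2}$ (which coincides with that of $\tilde H^{-1}H$). Squaring this inequality already produces the factor of $4$ on the right-hand side of the claim, so the core task is to show that $\kappa$ is close enough to $1$ that the bracketed rate reduces to the stated expression.

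The first step is to control $\|\tilde H^{-1}H-I\|$ by inserting the population resolvent $(\Sigma+\lambda I)^{-1}$ and using the triangle inequality:
\[\|\tilde H^{-1}-H^{-1}\|\le\|\tilde H^{-1}-\mathbb{E}[\tilde H^{-1}]\|+\|\mathbb{E}[\tilde H^{-1}]-(\Sigma+\lambda I)^{-1}\|+\|(\Sigma+\lambda I)^{-1}-H^{-1}\|.\]
The first term is exactly $\alpha_1$ by definition. The second term is bounded by $\|\Omega_0\|$: since $\tilde H^{-1}$ is defined as the agent-wise average of optimally shrunk local resolvents and each local block satisfies the hypotheses of Theorem \ref{thm2}, linearity of expectation transfers the per-block bound to the average. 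For the third term, the resolvent identity $(\Sigma+\lambda I)^{-1}-H^{-1}=(\Sigma+\lambda I)^{-1}(H-\Sigma)H^{-1}$ together with the obvious operator bound $\|(\Sigma+\lambda I)^{-1}\|\le\lambda^{-1}$ and $\|H^{-1}\|\le\lambda^{-1}$ gives $\|(\Sigma+\lambda I)^{-1}-H^{-1}\|\le\alpha_0/\lambda^2$. Multiplying through by $\|H\|\le\sigma_{\max}+\lambda+\alpha_0$ yields
\[\|\tilde H^{-1}H-I\|\le\|H\|\cdot\|\tilde H^{-1}-H^{-1}\|\le\alpha.\]

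The second step converts this into a condition-number bound on the symmetric preconditioned matrix $\tilde H^{-1/2}H\tilde H^{-1/2}$, whose eigenvalues coincide with those of $\tilde H^{-1}H$; the spectrum lies in an interval around $1$ of radius at most $\alpha/(1-\alpha)$ (after passing from the one-sided bound on $\tilde H^{-1}H-I$ to a symmetric spectral interval, which costs a factor of $1/(1-\alpha)$ from the inverse perturbation expansion). Plugging the resulting condition number $\kappa$ into the standard PCG rate above and squaring, the ratio $(\sqrt{\kappa}-1)/(\sqrt{\kappa}+1)$ simplifies algebraically to $(1-\sqrt{1-\alpha/(1-\alpha)})/(1+\sqrt{1-\alpha/(1-\alpha)})$, yielding the stated form with exponent $s_t$.

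The main obstacle is the middle term $\|\mathbb{E}[\tilde H^{-1}]-(\Sigma+\lambda I)^{-1}\|$: Theorem \ref{thm2} is stated for a single agent's shrinkage resolvent, so one must argue that averaging over $m$ agents does not blow up the bias bound, which follows from convexity of the spectral norm and applying Theorem \ref{thm2} uniformly across blocks. Once this is done, the rest is careful bookkeeping of the spectral perturbation and invocation of a standard PCG theorem.
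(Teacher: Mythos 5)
Your decomposition of $\|\tilde H^{-1}-H^{-1}\|$ via the triangle inequality, the resolvent-identity bound $\|(\Sigma+\lambda I)^{-1}-H^{-1}\|\le\alpha_0/\lambda^2$, the multiplication by $\|H\|$ to get $\alpha$, and the subsequent passage to the bound $\|H^{-1/2}\tilde HH^{-1/2}-I\|\le\alpha/(1-\alpha)$ all match what the paper does, and your remark about applying Theorem \ref{thm2} blockwise and averaging is a correct (if terse) accounting of the $\|\Omega_0\|$ term. The gap is in the final step: the algebraic identity you invoke is false. If the preconditioned operator has spectrum in $[1-\eta,1+\eta]$ with $\eta=\alpha/(1-\alpha)$, then $\kappa\le(1+\eta)/(1-\eta)$ and the classical Chebyshev rate gives
\[
\frac{\sqrt{\kappa}-1}{\sqrt{\kappa}+1}=\frac{\sqrt{1+\eta}-\sqrt{1-\eta}}{\sqrt{1+\eta}+\sqrt{1-\eta}}=\frac{1-\sqrt{1-\eta^2}}{\eta},
\]
which is \emph{not} equal to $\bigl(1-\sqrt{1-\eta}\bigr)/\bigl(1+\sqrt{1-\eta}\bigr)$; a quick cross-multiplication shows the two differ by $2\sqrt{1+\eta}-2\ne0$. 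In fact, the stated form corresponds to a condition number of $1/(1-\eta)$, i.e.\ a one-sided spectral interval $[1-\eta,1]$, whereas the argument you ran only produces a two-sided interval, and for small $\eta$ the rate you can actually establish ($\approx\eta/2$) is roughly twice the rate in the lemma ($\approx\eta/4$). So your derivation, done correctly, yields a strictly weaker bound than the one you are asked to prove.

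The paper avoids this by not reproving the PCG rate at all: once it has $\|H^{-1/2}\tilde HH^{-1/2}-I\|\le\alpha/(1-\alpha)$, it cites equation~(3.3) of \citet{lacotte2021FastCQ}, which packages this quantity directly into the rate $4\bigl((1-\sqrt{1-\delta})/(1+\sqrt{1-\delta})\bigr)^{s_t}$ with $\delta=\alpha/(1-\alpha)$. To complete your proof you would need to invoke that result (or reproduce whatever additional structure it exploits), rather than the generic condition-number bound.
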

\begin{proof}
 since 
 \begin{equation}\label{lemmab.1eq0}
\begin{aligned}
&\left\|\tilde H^{-1}-H^{-1}\right\|\\&=\left\|\tilde H^{-1}-\mathbb{E}\left[\tilde H^{-1}\right]+\mathbb{E}\left[\tilde H^{-1}\right]-(\Sigma+\lambda I)^{-1}+(\Sigma+\lambda I)^{-1}-H^{-1}\right\|\\
&\leq \frac{1}{\lambda^2}\alpha_0+\alpha_1+\|\Omega_0\|
\end{aligned}
\end{equation}
thus
\begin{equation*}
\begin{aligned}
\left\|H^{\frac{1}{2}}\tilde H^{-1}H^{\frac{1}{2}}-I\right\|&=\left\|H^{\frac{1}{2}}\left(\tilde H^{-1}-H^{-1}\right)H^{\frac{1}{2}}\right\|\\
&\leq \|H\|\left\|\tilde H^{-1}-H^{-1}\right\|\\
&\leq (\sigma_{\max}+\lambda+\alpha_0)\left(\frac{1}{\lambda^2}\alpha_0+\alpha_1+\|\Omega_0\|\right)=\alpha
\end{aligned}
\end{equation*}
Thus,
\begin{equation*}
\begin{aligned}
\left\|H^{-\frac{1}{2}}\tilde H H^{-\frac{1}{2}}-I\right\|\leq \frac{\alpha}{1-\alpha}
\end{aligned}
\end{equation*}
By equation (3.3) in \cite{lacotte2021FastCQ}, set the initial point in the preconditioned conjugate gradient method at 0,
\[\frac{\|\tilde p_t-p_t^\star\|_H^2}{\|p_t^\star\|_H^2}\leq 4\left(\frac{1-\sqrt{1-\frac{\alpha}{1-\alpha}}}{1+\sqrt{1-\frac{\alpha}{1-\alpha}}}\right)^{s_t}\]
\end{proof}
\begin{lemma}\label{lemmab.2}
 If data matrix $A$ is random and satisfies Assumption \ref{ass_1}, with $A_i$ i.i.d. mean zero, assume $\omega_t$ independent of all $A_j^{(i)}$'s, denote $\Sigma(\omega_t) = \mbox{Cov}\left(f_i''\left(\omega_t^TA_j^{(i)}\right)^{\frac{1}{2}}A_j^{(i)}\right)$,
 
\[\frac{\|\tilde p_t-p_t^\star\|_{H(\omega_t)}^2}{\|p_t^\star\|_{H(\omega_t)}^2}\leq 4\left(\frac{1-\sqrt{1-\frac{\alpha}{1-\alpha}}}{1+\sqrt{1-\frac{\alpha}{1-\alpha}}}\right)^{s_t}\]
where $\alpha=(\sigma_{\max}+\lambda+\alpha_0)\left(\frac{1}{\lambda^2}\alpha_0+\alpha_1+\|\Omega_0\|\right)$ with $\alpha_0=\left\|\Sigma(\omega_t)-\frac{1}{n}\sum_{i=1}^n f_i''\left(\omega_t^TA_i\right)A_iA_i^T\right\|,$  $\alpha_1=\left\|\tilde H(\omega_t)^{-1}-\mathbb{E}\left[\tilde H(\omega_t)^{-1}\right]\right\|$ and $\sigma_{\max}$ being the largest eigenvalues of $\Sigma(\omega_t)$. $s_t$ denotes the number of iterations in preconditioned conjugate gradient method.
\end{lemma}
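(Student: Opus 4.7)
The plan is to mirror the proof of Lemma \ref{lemmab.1}, replacing the fixed quadratic-loss covariance $\Sigma$ with the loss-dependent covariance $\Sigma(\omega_t) = \mathrm{Cov}(f_i''(\omega_t^T A_j^{(i)})^{1/2} A_j^{(i)})$ and the Hessian $H$ with $H(\omega_t)$. The key observation is that, conditional on $\omega_t$ being independent of the $A_j^{(i)}$'s, the rescaled rows $f_i''(\omega_t^T A_j^{(i)})^{1/2} A_j^{(i)}$ are i.i.d.\ with mean zero and covariance $\Sigma(\omega_t)$, so Theorem \ref{thm2} applies to them. This is exactly what is needed to make the shrinkage formula in the definition of $\tilde H(\omega_t)^{-1}$ an asymptotically unbiased estimator of $(\Sigma(\omega_t)+\lambda I)^{-1}$, with bias controlled by some $\Omega_0$ satisfying $\|\Omega_0\|\to 0$.

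First I would decompose $\|\tilde H(\omega_t)^{-1} - H(\omega_t)^{-1}\|$ by inserting $\mathbb{E}[\tilde H(\omega_t)^{-1}]$ and $(\Sigma(\omega_t)+\lambda I)^{-1}$, yielding the three-term bound
\[
\|\tilde H(\omega_t)^{-1}-H(\omega_t)^{-1}\| \leq \underbrace{\|\tilde H(\omega_t)^{-1}-\mathbb{E}[\tilde H(\omega_t)^{-1}]\|}_{\alpha_1} + \underbrace{\|\mathbb{E}[\tilde H(\omega_t)^{-1}]-(\Sigma(\omega_t)+\lambda I)^{-1}\|}_{\|\Omega_0\|} + \underbrace{\|(\Sigma(\omega_t)+\lambda I)^{-1}-H(\omega_t)^{-1}\|}_{\leq \alpha_0/\lambda^2},
\]
where the last piece comes from the resolvent identity $B^{-1}-C^{-1} = B^{-1}(C-B)C^{-1}$ applied to $B=\Sigma(\omega_t)+\lambda I$ and $C=H(\omega_t)$, together with $\|B^{-1}\|,\|C^{-1}\|\leq 1/\lambda$ and $\|C-B\| = \alpha_0$.

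Next, I would convert this additive bound on $\|\tilde H(\omega_t)^{-1}-H(\omega_t)^{-1}\|$ into a relative bound in the $H(\omega_t)$-metric. Using $\|H(\omega_t)\|\leq \sigma_{\max}+\lambda+\alpha_0$,
\[
\|H(\omega_t)^{1/2}\tilde H(\omega_t)^{-1}H(\omega_t)^{1/2}-I\| \leq \|H(\omega_t)\|\,\|\tilde H(\omega_t)^{-1}-H(\omega_t)^{-1}\| \leq \alpha,
\]
exactly as in Lemma \ref{lemmab.1}. A standard Neumann-series argument then yields $\|H(\omega_t)^{-1/2}\tilde H(\omega_t)H(\omega_t)^{-1/2}-I\|\leq \alpha/(1-\alpha)$, so the condition number of the preconditioned system is controlled by $\alpha/(1-\alpha)$.

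Finally, I would invoke the standard PCG contraction bound (equation (3.3) in \citet{lacotte2021FastCQ}) initialized at zero, which gives
\[
\frac{\|\tilde p_t - p_t^\star\|_{H(\omega_t)}^2}{\|p_t^\star\|_{H(\omega_t)}^2} \leq 4\left(\frac{1-\sqrt{1-\alpha/(1-\alpha)}}{1+\sqrt{1-\alpha/(1-\alpha)}}\right)^{s_t}.
\]
The main obstacle is the first step: carefully justifying that Theorem \ref{thm2} may be applied to the reweighted data $f_i''(\omega_t^T A_j^{(i)})^{1/2} A_j^{(i)}$. This requires the independence of $\omega_t$ from the $A_j^{(i)}$'s (explicitly assumed), and one must verify that Assumption \ref{ass_1} transfers from the $A_j^{(i)}$'s to the reweighted rows (bounded fourth moments and vanishing $\nu$); this is routine when $f_i''$ is bounded above and bounded away from zero on the relevant range, and otherwise requires an additional regularity hypothesis on $f_i''$ that should be folded in implicitly alongside Assumption \ref{ass_1}. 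Everything after this step is mechanical and parallels Lemma \ref{lemmab.1} verbatim.
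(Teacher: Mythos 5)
Your proof is correct and takes essentially the same approach as the paper: the paper's proof of Lemma \ref{lemmab.2} is literally a one-line instruction to repeat the argument of Lemma \ref{lemmab.1} with $H,\tilde H,\Sigma$ replaced by $H(\omega_t),\tilde H(\omega_t),\Sigma(\omega_t)$, which is exactly the substitution you carry out. Your extra remark that Assumption \ref{ass_1} should be checked to transfer to the reweighted rows $f_i''(\omega_t^T A_j^{(i)})^{1/2}A_j^{(i)}$ is a fair observation that the paper leaves implicit, but it does not constitute a departure from the paper's route.
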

\begin{proof}
Follow the same argument as in Lemma \ref{lemmab.1} with $H$ replaced by $H(\omega_t),$ $\tilde H$ replaced by $\tilde H(\omega_t)$, and $\Sigma$ replaced by $\Sigma(\omega_t)$.
\end{proof}

\subsection{Proof of Theorem \ref{thm3.1}} \label{thm3.1proof}

\begin{proof}
We build on proof in \citet{dereziński2019distributed}. Consider the $t$th Newton's step. Denote $p_t^\star=H^{-1}g_t$ and $\tilde p_t=\tilde H^{-1}g_t$ where $g_t$ is the gradient and the current Newton's point, by Lemma 14 in \citet{dereziński2019distributed}, if
\[\|\tilde p_t-p_t^\star\|_H\leq \alpha \|p_t^\star\|_H\]
for some $\alpha$, denote $\kappa=\mbox{cond}(H)$, then it holds
\[\|\Delta_{t+1}\|\leq \frac{\alpha\sqrt{2\kappa}}{\sqrt{1-\alpha^2}}\|\Delta_t\|\]
Since
\begin{equation*}
\begin{aligned}
\|\tilde p_t-p_t^\star\|_H&=\left\|H^{\frac{1}{2}}\left(\tilde H^{-1}-H^{-1}\right)H^{\frac{1}{2}}H^{-\frac{1}{2}}g_t\right\|\\&\leq \|H\|\left\|\tilde H^{-1}-H^{-1}\right\|\left\|H^{-\frac{1}{2}}g_t\right\|
\\&=\|H\|\left\|\tilde H^{-1}-H^{-1}\right\|\|p_t^\star\|_H
\end{aligned}
\end{equation*}
and
\[\sigma_{\max}(H)\leq \sigma_{\max}+\lambda+\alpha_0,\sigma_{\min}(H)\geq \sigma_{\min}+\lambda-\alpha_0\]
together with (\ref{lemmab.1eq0}), we can derive
\begin{equation*}
\begin{aligned}
\|\Delta_{t+1}\|
&\leq \frac{\sqrt{2}\alpha}{\sqrt{1-\alpha^2}}\sqrt{\frac{\sigma_{\max}+\lambda+\alpha_0}{\sigma_{\min}+\lambda-\alpha_0}}\|\Delta_t\|
\end{aligned}
\end{equation*}
where $\alpha=(\sigma_{\max}+\lambda+\alpha_0)\left(\frac{1}{\lambda^2}\alpha_0+\alpha_1+\|\Omega_0\|\right)$
\end{proof}

\subsection{Proof of Theorem \ref{thm3.3}}\label{thm3.3_proof}

\begin{proof}
Follow similar argument as in the convergence proof for Newton's method with quadratic loss (Appendix \ref{thm3.1proof}),  it can be derived that
\[\|H(\omega_t)^{-1}-\tilde H(\omega_t)^{-1}\|\leq \frac{1}{\lambda^2}\alpha_0+\alpha_1+\|\Omega_0\|\]
Let $\tilde p_t=\tilde H(\omega_t)^{-1}g(\omega_t)$ and $p^\star_t=H(\omega_t)^{-1}g(\omega_t)$. Since
\begin{equation*}
\begin{aligned}
\|\tilde p_t-p_t^\star\|_{H(\omega_t)}&=\left\|H(\omega_t)^{\frac{1}{2}}\left(\tilde H(\omega_t)^{-1}-H(\omega_t)^{-1}\right)H(\omega_t)^{\frac{1}{2}}H(\omega_t)^{-\frac{1}{2}}g(\omega_t)\right\|\\&\leq \|H(\omega_t)\|\left\|\tilde H(\omega_t)^{-1}-H(\omega_t)^{-1}\right\|\left\|H(\omega_t)^{-\frac{1}{2}}g(\omega_t)\right\|
\\&=\|H(\omega_t)\|\left\|\tilde H(\omega_t)^{-1}-H(\omega_t)^{-1}\right\|\|p^\star_t\|_{H(\omega_t)}
\\ &\leq (\sigma_{\max}+\lambda+\alpha_0)\left(\frac{1}{\lambda^2}\alpha_0+\alpha_1+\|\Omega_0\|\right)\|p^\star_t\|_{H(\omega_t)}
\end{aligned}
\end{equation*}
By Lemma 14 in \cite{dereziński2019distributed},
\[\|\Delta_{t+1}\|\leq \max\left\{\frac{\sqrt{2}\alpha}{\sqrt{1-\alpha^2}}\sqrt{\frac{\sigma_{\max}+\lambda+\alpha_0}{\sigma_{\min}+\lambda-\alpha_0}}\|\Delta_t\|,\frac{2L}{\sigma_{\min}+\lambda-\alpha_0}\|\Delta_t\|^2\right\}\]
where $\alpha=(\sigma_{\max}+\lambda+\alpha_0)\left(\frac{1}{\lambda^2}\alpha_0+\alpha_1+\|\Omega_0\|\right)$.
\end{proof}

\subsection{More Convergence Analysis Results}\label{conv-append}
\subsubsection{Convergence of inexact Newton's method for Regularized General Convex  Smooth 
 Loss}
\begin{theorem}\label{thm3.4}
 (Convergence of inexact Newton's method with Shrinkage) Assume $\omega_t$ independent of all $A_j^{(i)}$'s.  Let $\alpha, \Sigma(\omega_t),\alpha_0,\alpha_1, \Delta_{t+1},\sigma_{\min},\sigma_{\max}$ as defined in Theorem \ref{thm3.3},
\[\|\Delta_{t+1}\|\leq\max\left\{ \frac{2L}{\sigma_{\min}+\lambda-\alpha_0}\|\Delta_t\|^2,\frac{\sqrt{2}\alpha'}{\sqrt{1-\alpha'^2}}\sqrt{\frac{\sigma_{\max}+\lambda+\alpha_0}{\sigma_{\min}+\lambda-\alpha_0}}\|\Delta_t\|\right\}\]
where $\alpha'=\sqrt{4\left(\frac{1-\sqrt{1-\frac{\alpha}{1-\alpha}}}{1+\sqrt{1-\frac{\alpha}{1-\alpha}}}\right)^{s_t}}$ with $s_t$ being the number of iterations in preconditioned conjugate gradient method.
\end{theorem}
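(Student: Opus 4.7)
The plan is to mirror the proof of Theorem \ref{thm3.3}, replacing the exact Newton-with-shrinkage direction $\tilde H(\omega_t)^{-1}g(\omega_t)$ by the output of the preconditioned conjugate gradient solver, and then absorbing the additional PCG error into a new relative accuracy parameter $\alpha'$. The key observation is that once we have an inequality of the form $\|\tilde p_t - p_t^\star\|_{H(\omega_t)} \le \alpha' \|p_t^\star\|_{H(\omega_t)}$ with $\alpha'\in(0,1)$, Lemma~14 of \citet{dereziński2019distributed} immediately yields the claimed convergence bound, with the same Lipschitz-Hessian quadratic fallback term controlling the high-error regime.

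To obtain the bound on $\|\tilde p_t-p_t^\star\|_{H(\omega_t)}$, I would invoke Lemma~\ref{lemmab.2}, which is precisely tailored to the general convex smooth setting of Theorem~\ref{thm3.3}. Taking the square root of the inequality stated there,
\begin{equation*}
\frac{\|\tilde p_t-p_t^\star\|_{H(\omega_t)}}{\|p_t^\star\|_{H(\omega_t)}} \;\le\; \sqrt{4\left(\frac{1-\sqrt{1-\tfrac{\alpha}{1-\alpha}}}{1+\sqrt{1-\tfrac{\alpha}{1-\alpha}}}\right)^{s_t}} \;=\; \alpha',
\end{equation*}
where $\alpha=(\sigma_{\max}+\lambda+\alpha_0)\bigl(\tfrac{1}{\lambda^2}\alpha_0+\alpha_1+\|\Omega_0\|\bigr)$ is exactly the quantity that played the role of a contraction parameter in Theorem~\ref{thm3.3}. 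This identifies $\alpha'$ as an effective per-iteration relative error of the inner PCG solver.

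Plugging this into Lemma~14 of \citet{dereziński2019distributed}, applied with $H(\omega_t)$ in place of $H$ and the PCG-output $\tilde p_t$ as the approximate Newton step, produces the max of two terms: a quadratic term $\tfrac{2L}{\sigma_{\min}+\lambda-\alpha_0}\|\Delta_t\|^2$ coming from the Lipschitz Hessian estimate (identical to the one in Theorem~\ref{thm3.3}, since the Hessian Lipschitz argument is agnostic to how the step is computed), and a linear term $\tfrac{\sqrt 2 \alpha'}{\sqrt{1-\alpha'^2}}\sqrt{\tfrac{\sigma_{\max}+\lambda+\alpha_0}{\sigma_{\min}+\lambda-\alpha_0}}\|\Delta_t\|$ controlling the Newton-step approximation regime. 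The bounds on the extreme eigenvalues of $H(\omega_t)$, namely $\sigma_{\min}+\lambda-\alpha_0$ and $\sigma_{\max}+\lambda+\alpha_0$, are inherited verbatim from the proof of Theorem~\ref{thm3.3}.

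The only subtlety I would flag is ensuring $\alpha'<1$ so that the linear contraction factor is well defined; this is automatic provided $s_t$ is large enough (concretely, $s_t$ must exceed a threshold depending on $\alpha/(1-\alpha)$), and this condition is implicitly assumed by the statement. Beyond that, the argument is a straightforward composition of Lemma~\ref{lemmab.2} with the proof pattern of Theorem~\ref{thm3.3}, and I would expect no new estimates to be required.
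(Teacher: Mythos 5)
Your proposal matches the paper's proof exactly: the paper also obtains the result by combining Lemma~\ref{lemmab.2} (to bound the PCG relative error, giving $\alpha'$) with Lemma~14 of \citet{dereziński2019distributed}. The remark about requiring $s_t$ large enough so that $\alpha'<1$ is a reasonable observation but does not change the argument.
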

\begin{proof}
The proof follows by a combination of Lemma 14 in \cite{dereziński2019distributed} and Lemma \ref{lemmab.2}.
\end{proof}



\section{Supplementary Simulation Results} \label{append_simu}



\subsection{Supplementary Simulation for Section \ref{section5.1}}
\subsubsection{Formulas for Different Methods for Covariance Resolvent Estimation}\label{5.1formula}
Here we explicitly list the formulas used by different methods to compute $\tilde R$ which is used for the plots. Let $n$ denote the number of data, $m$ denote number of agents, create $n,m$ in the way that $n$ is divided by $m$ and split the data evenly to each agent, let $A\in\mathbb{R}^{n\times d}$ denote the global data and $A_i$ denote local data of size $\mathbb{R}^{(n/m)\times d}$, we compare three methods: 
\begin{align*}
&\mbox{Average: }\tilde R_a=\frac{1}{m}\sum_{i=1}^m R_{ai}\mbox{ where }R_{ai}=\left(\frac{m}{n}A_i^TA_i+\lambda I\right)^{-1}\\
&\mbox{Shrinkage: }\tilde R_s=\frac{1}{m}\sum_{i=1}^m R_{si}\mbox{ where }R_{si}=\left(\frac{m}{n(1-\frac{md_\lambda}{n})}A_i^TA_i+\lambda I\right)^{-1} \\
& \mbox{Determinant: }\tilde R_d= \frac{1}{m\mbox{det}(\Sigma+\lambda I)}\sum_{i=1}^m R_{di}\mbox{ where }R_{di}=\mbox{det}\left(\frac{m}{n}A_i^TA_i+\lambda I\right)\left(\frac{m}{n}A_i^TA_i+\lambda I\right)^{-1}
\end{align*}
where $d_\lambda=\mbox{tr}\left(\Sigma(\Sigma+\lambda I)^{-1}\right)$.
\subsubsection{Experiments on Covariance Resolvent Estimation with Synthetic data}\label{syn-simu-1}
Here we give simulation results for covariance resolvent estimation with synthetic data. See Section \ref{section5.1} for a description of the setup and different methods being compared. Figure \ref{syn-cov-simu} shows that our shrinkage method gives more accurate estimation of covariance resolvent than both the averaging method and the determinantal averaging method for synthetic data created with difference covariance matrices. 
\begin{figure}[h]
\centering
\begin{subfigure}{0.23\linewidth}
\includegraphics[width=\linewidth]{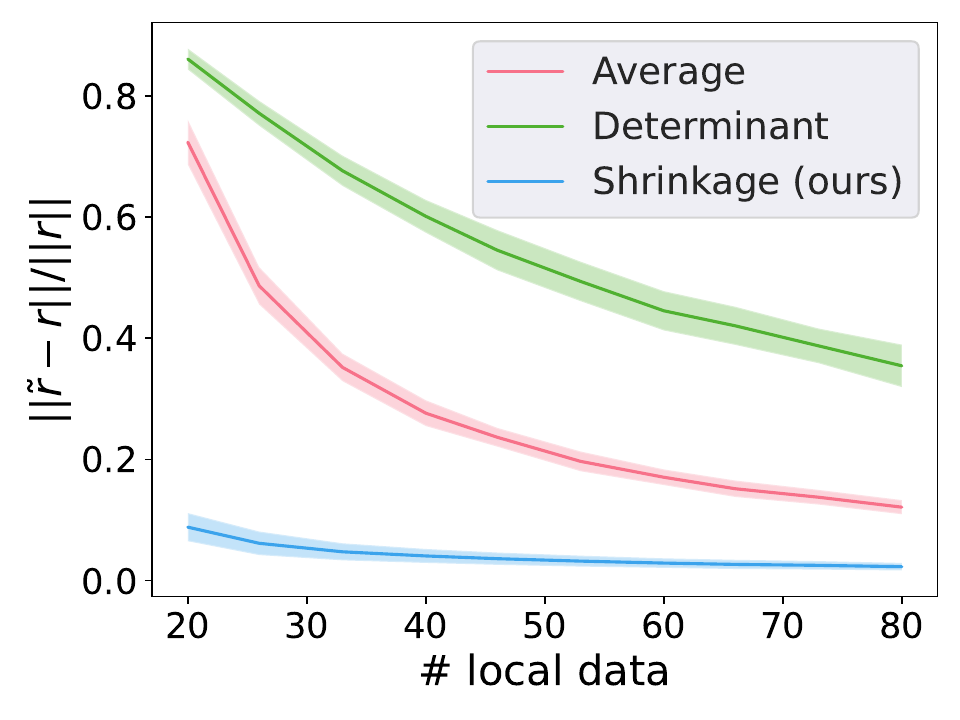}
\end{subfigure}
\begin{subfigure}{0.23\linewidth}
\includegraphics[width=\linewidth]{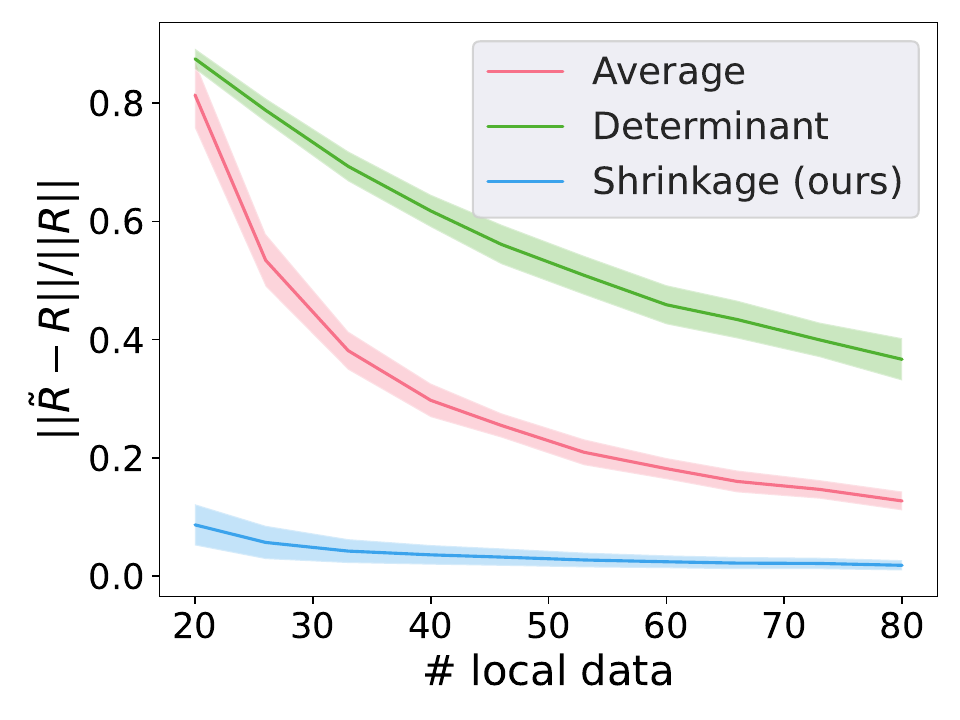}
\end{subfigure}
\caption{Synthetic data experiments on covariance resolvent estimation. Let $m$ denote the number of agents, $d$ denote data dimension, and $\lambda$ denote the regularizer. We take $m=100, d=10, \lambda=0.1$. Data is i.i.d. $\mathcal{N}(0,\Sigma)$ with $\Sigma=0.1I$ in the left plot, and $\Sigma=100C^TC, C_{ij}\sim U(0,1)$ in the right plot.}\label{syn-cov-simu}
\end{figure}

\subsubsection{More Experiments on Covariance Resolvent Estimation with Normalized Data}\label{sec5.1.2append}
Here we give more simulation results for covariance resolvent estimation with normalized real datasets. See Section \ref{section5.1} for a description of the setup and different methods being compared. Figure \ref{fig6} shows our simulation results, which confirms the shrinkage method's superiority over the averaging method and the determinantal averaging method in covariance resolvent estimation for normalized real data.
\begin{figure}[h]
\begin{subfigure}{0.23\linewidth}
\includegraphics[width=\linewidth]{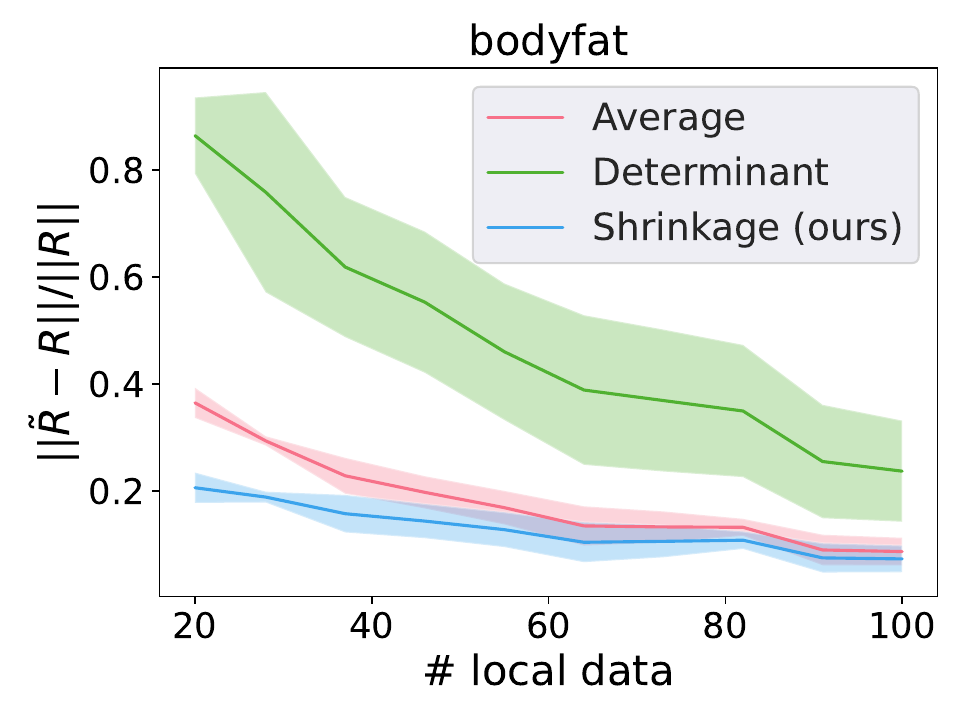}
\end{subfigure}
\hfill
\begin{subfigure}{0.23\linewidth}
\includegraphics[width=\linewidth]{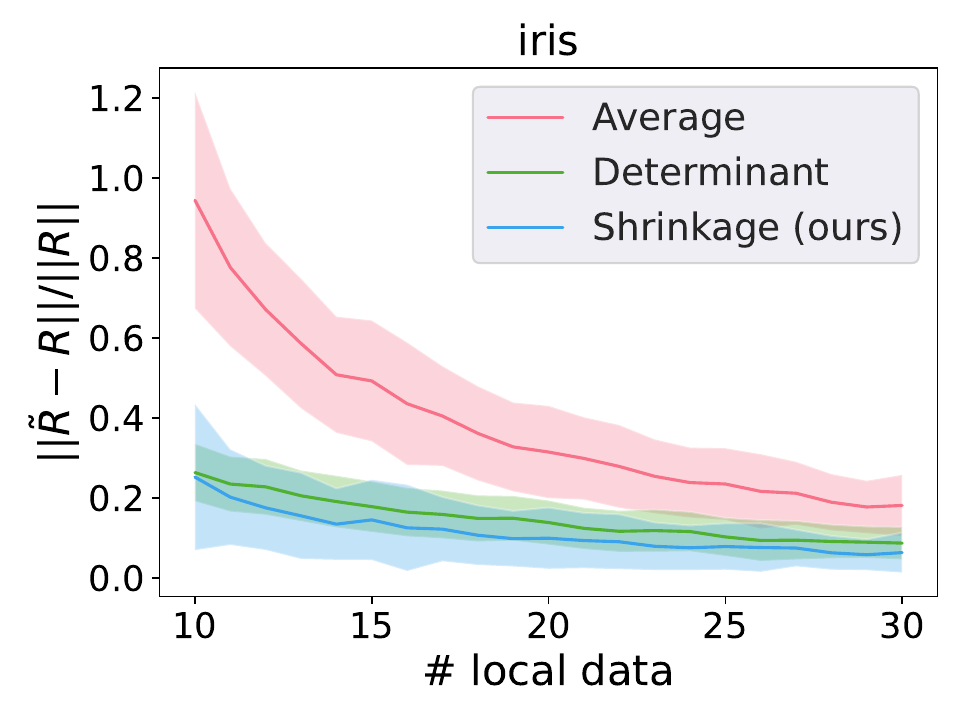}
\end{subfigure}
\hfill
\begin{subfigure}{0.23\linewidth}
\includegraphics[width=\linewidth]{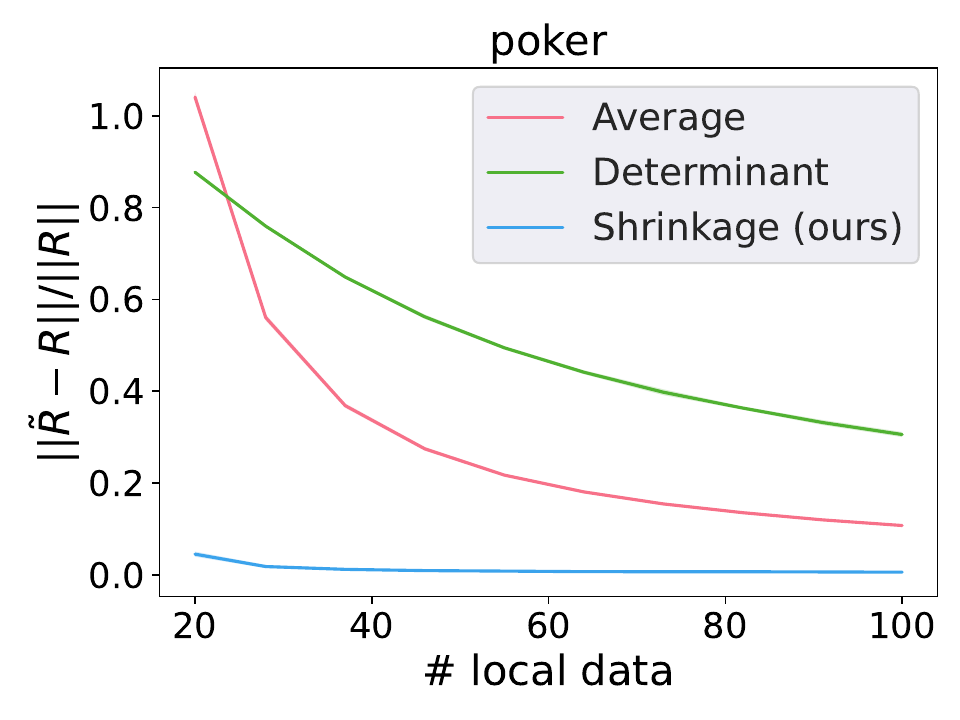}
\end{subfigure}
\hfill
\begin{subfigure}{0.23\linewidth}
\includegraphics[width=\linewidth]{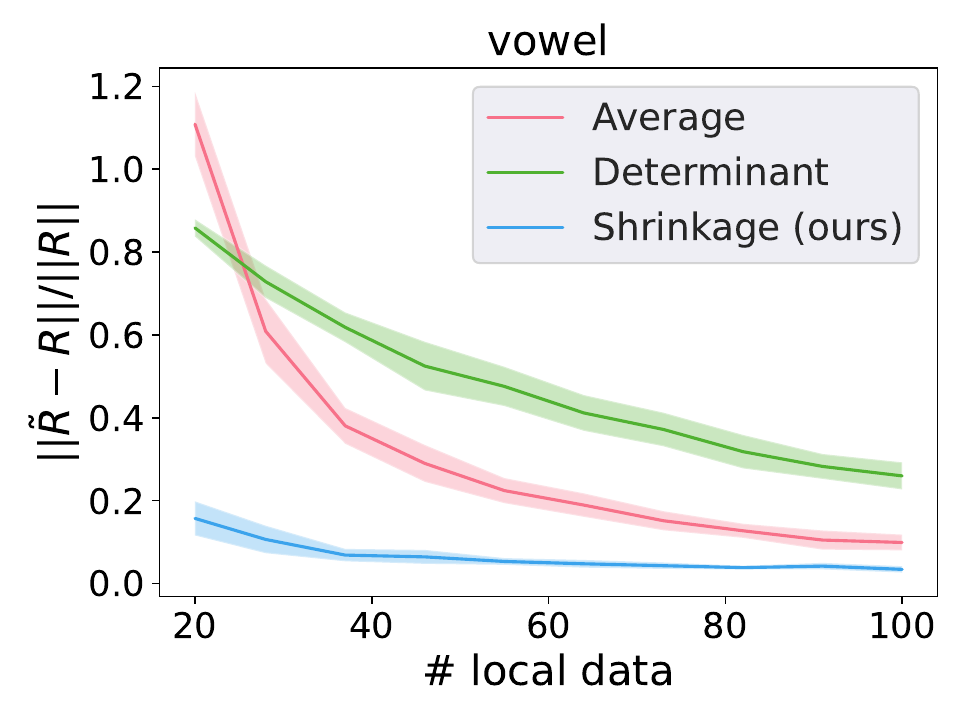}
\end{subfigure}

\begin{subfigure}{0.23\linewidth}
\includegraphics[width=\linewidth]{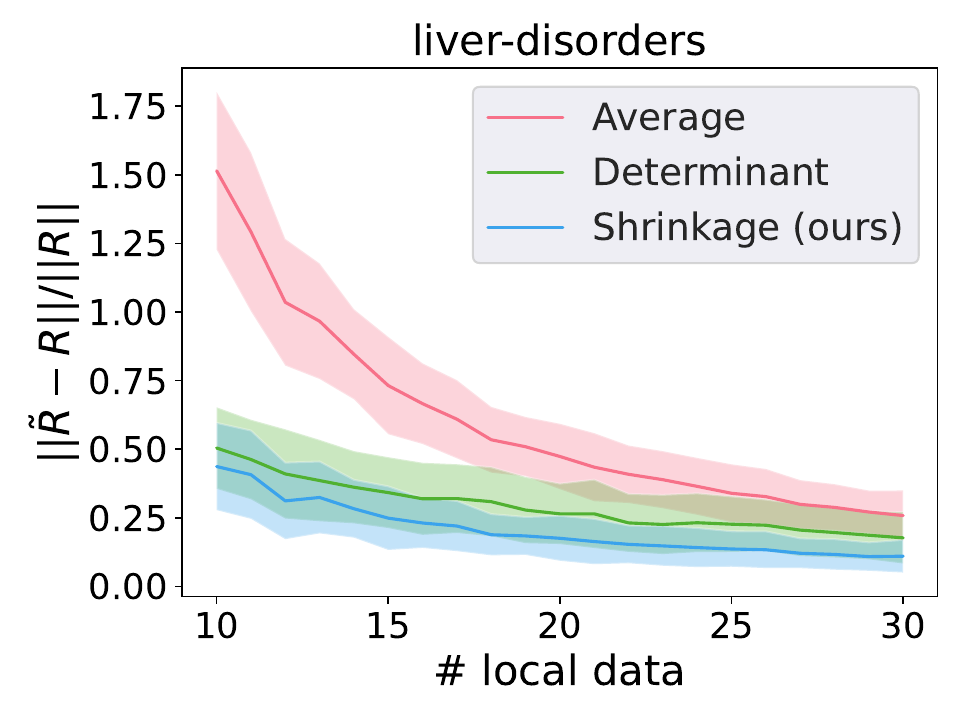}
\end{subfigure}
\hfill
\begin{subfigure}{0.23\linewidth}
\includegraphics[width=\linewidth]{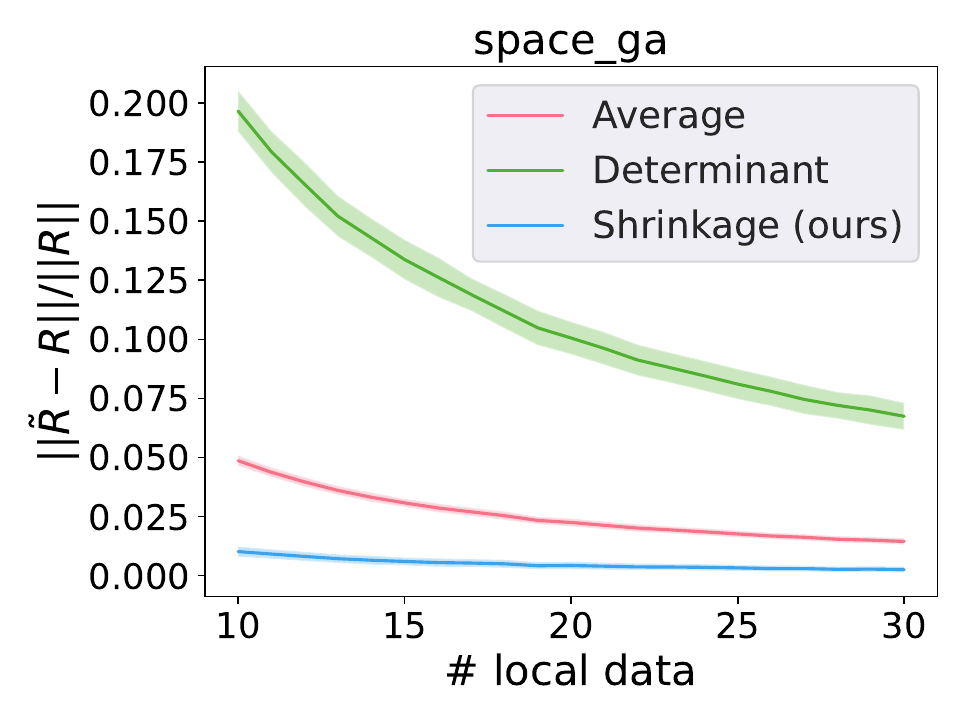}
\end{subfigure}
\hfill
\begin{subfigure}{0.23\linewidth}
\includegraphics[width=\linewidth]{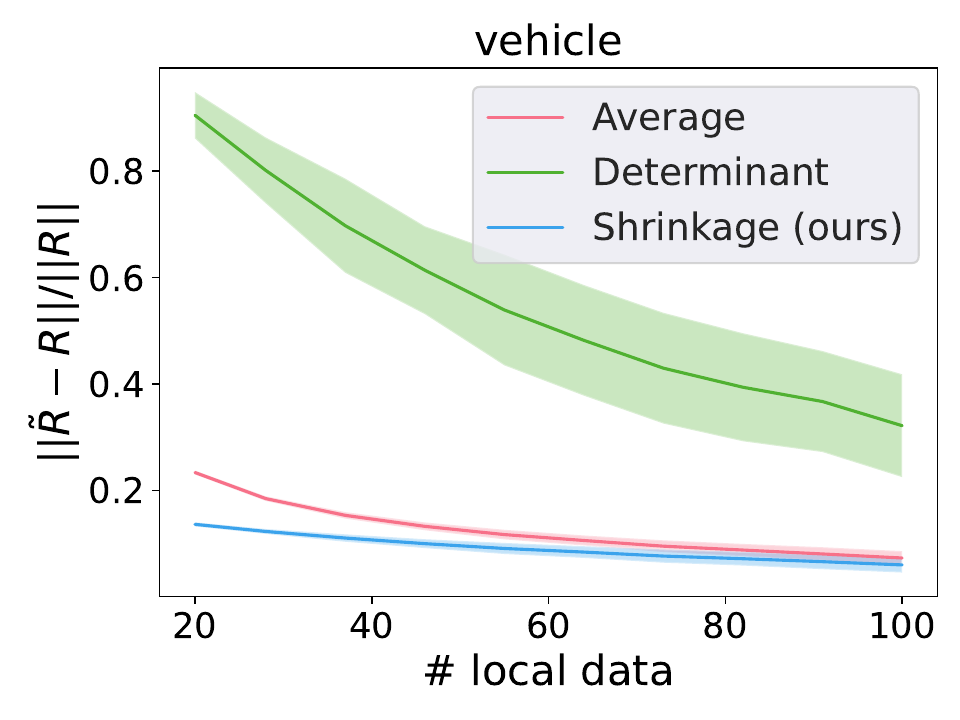}
\end{subfigure}
\hfill
\begin{subfigure}{0.23\linewidth}
\includegraphics[width=\linewidth]{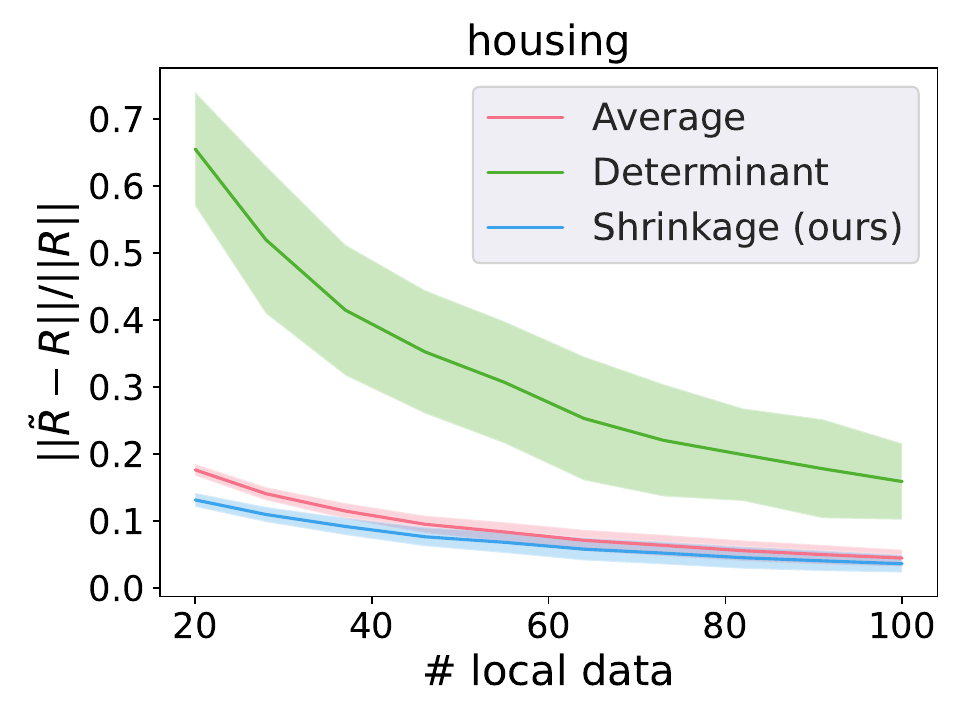}
\end{subfigure}
\caption{Normalized real data experiments on covariance resolvent estimation. $\lambda=0.001$. Number of total data is rounded down as multiple of number of local data, number of agent is number of total data divided by number of local data. We use $\Sigma=\frac{1}{n}A^TA$.}\label{fig6}
\end{figure}

\subsubsection{Experiments on Covariance Resolvent Estimation with Sketched Real Data}\label{sec5.1.3append}
Here we give simulation results for covariance resolvent estimation with sketched real datasets. See Section \ref{section5.1} for a description of the setup and different methods being compared. Take a real dataset,  we experiment with a sketch of it. See Section \ref{section4} for an introduction to data sketching. We test with sketching matrix with both gaussian entries and uniform entries and they give similar results as what can be witnessed from Figure \ref{fig7} and Figure \ref{fig8}. Figure \ref{fig7} and Figure \ref{fig8} demonstrate that the advantage of our method is not constrained to a specific data distribution.
\begin{figure}[h]
\begin{subfigure}{0.23\linewidth}
\includegraphics[width=\linewidth]{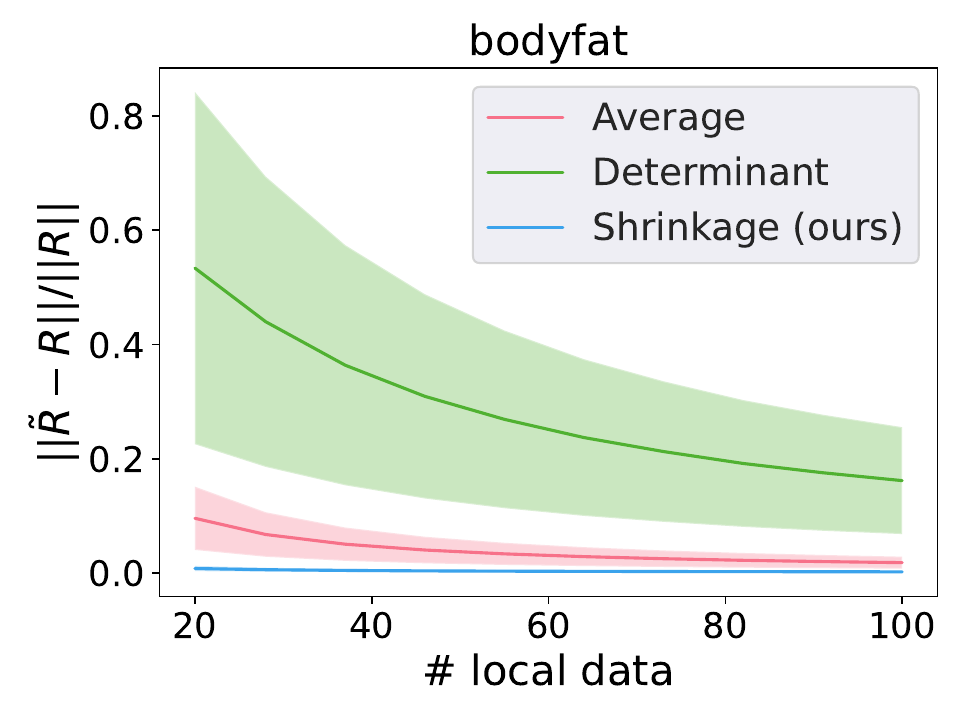}
\end{subfigure}
\hfill
\begin{subfigure}{0.23\linewidth}
\includegraphics[width=\linewidth]{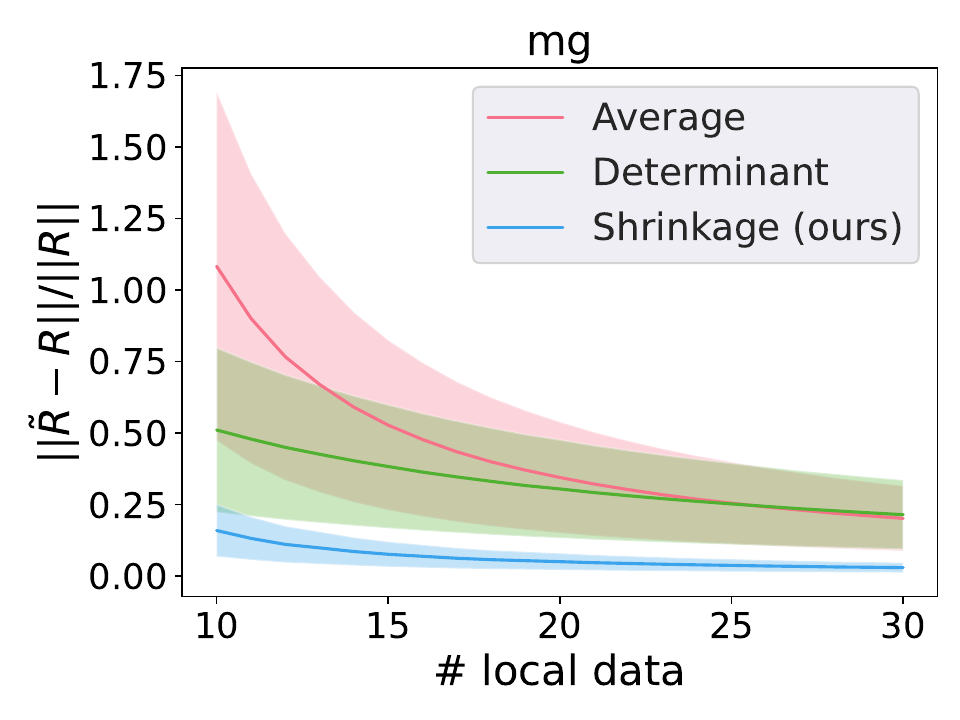}
\end{subfigure}
\hfill
\begin{subfigure}{0.23\linewidth}
\includegraphics[width=\linewidth]{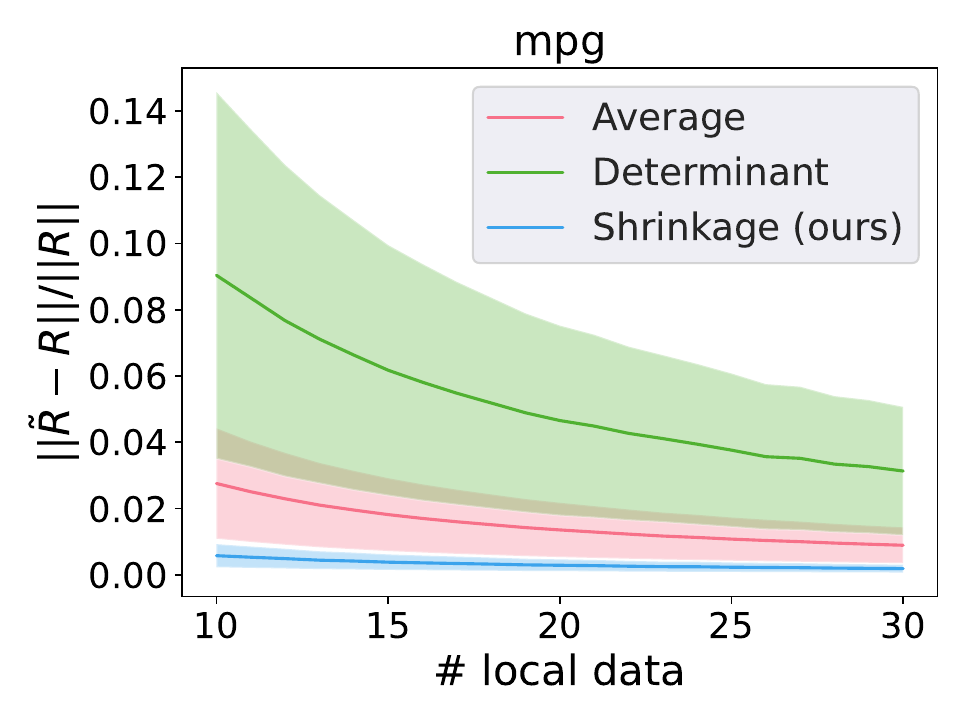}
\end{subfigure}
\hfill
\begin{subfigure}{0.23\linewidth}
\includegraphics[width=\linewidth]{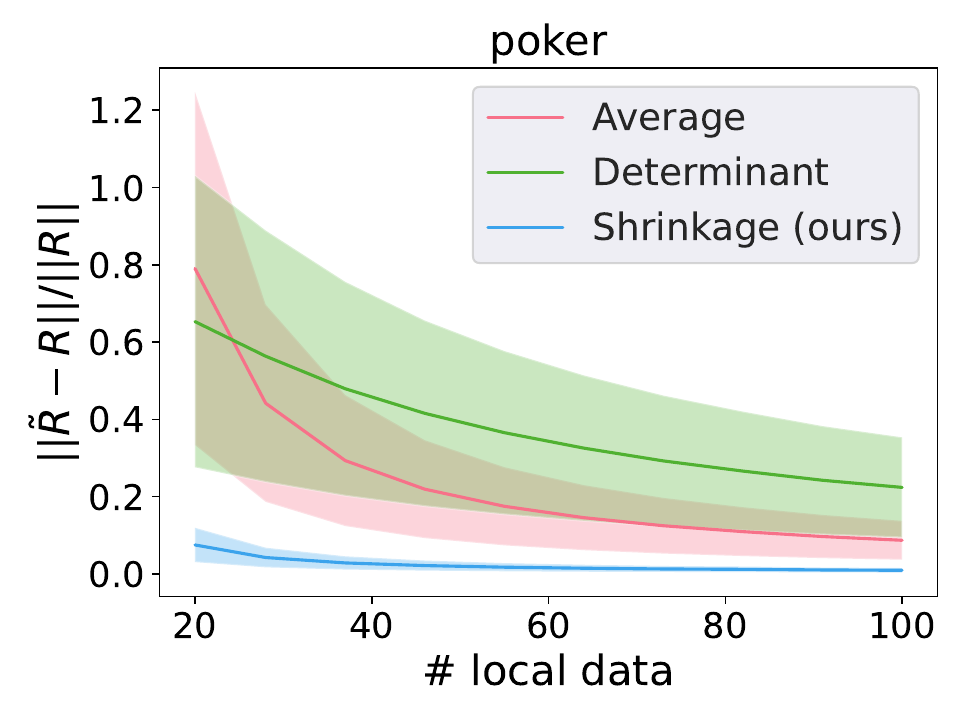}
\end{subfigure}

\begin{subfigure}{0.23\linewidth}
\includegraphics[width=\linewidth]{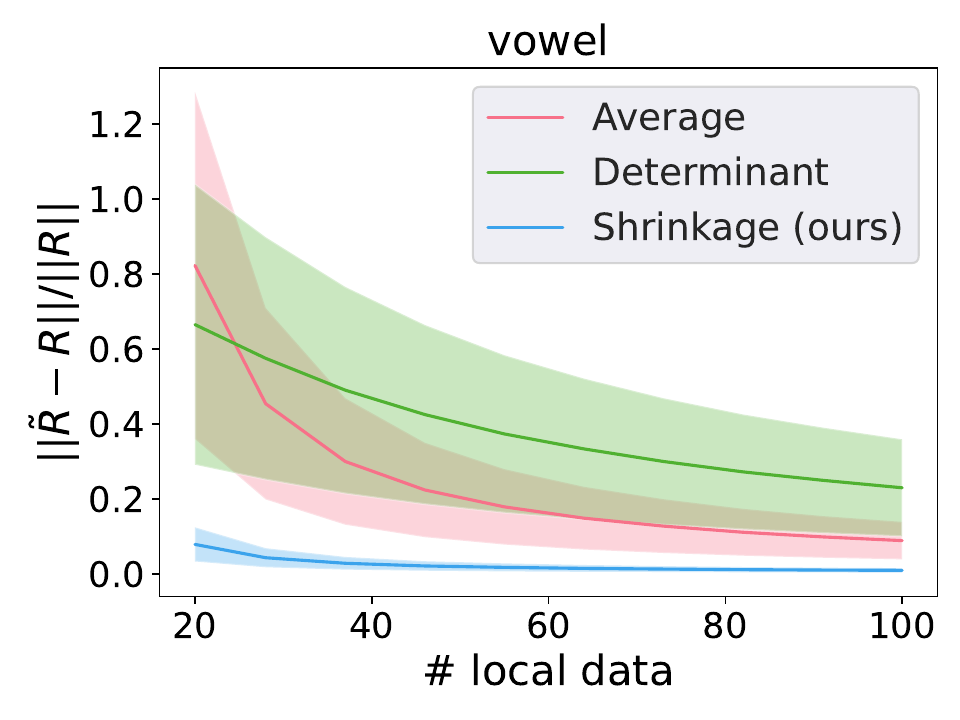}
\end{subfigure}
\hfill
\begin{subfigure}{0.23\linewidth}
\includegraphics[width=\linewidth]{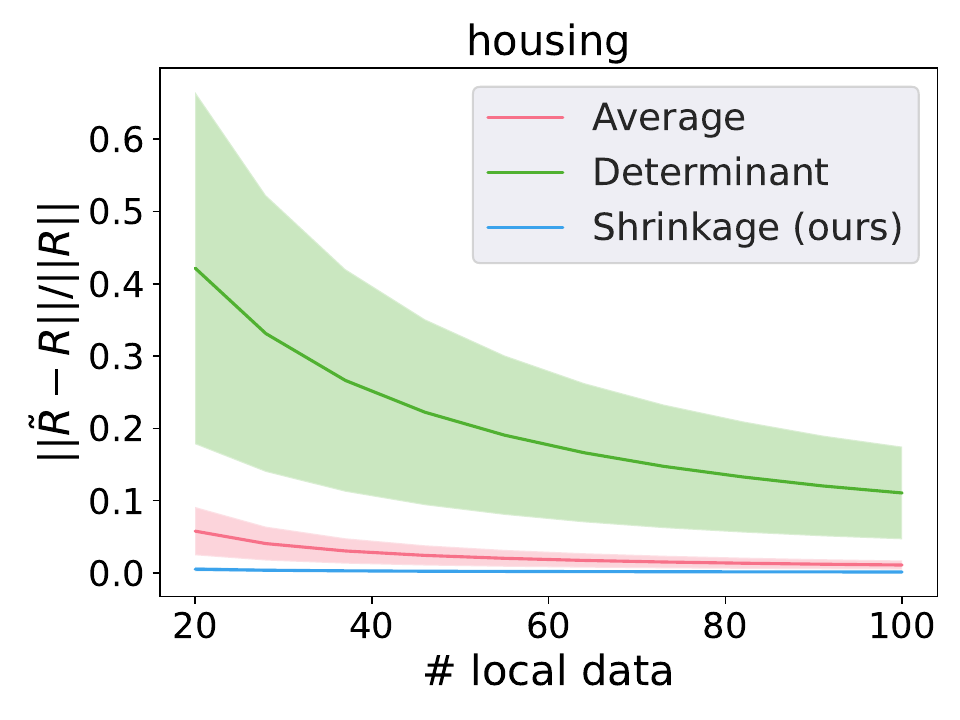}
\end{subfigure}
\hfill
\begin{subfigure}{0.23\linewidth}
\includegraphics[width=\linewidth]{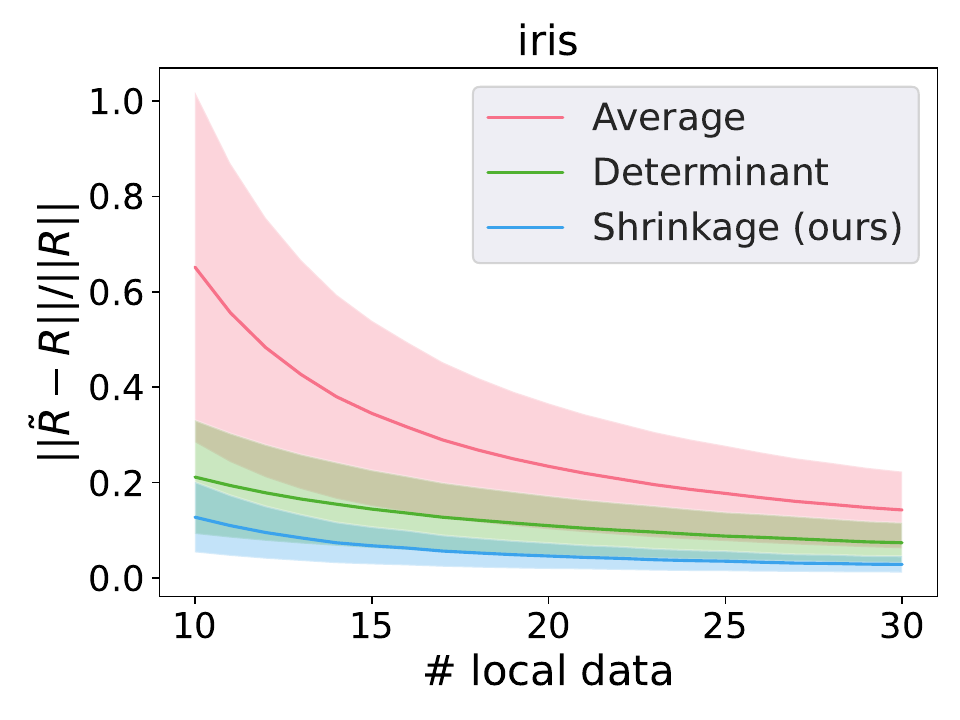}
\end{subfigure}
\hfill
\begin{subfigure}{0.23\linewidth}
\includegraphics[width=\linewidth]{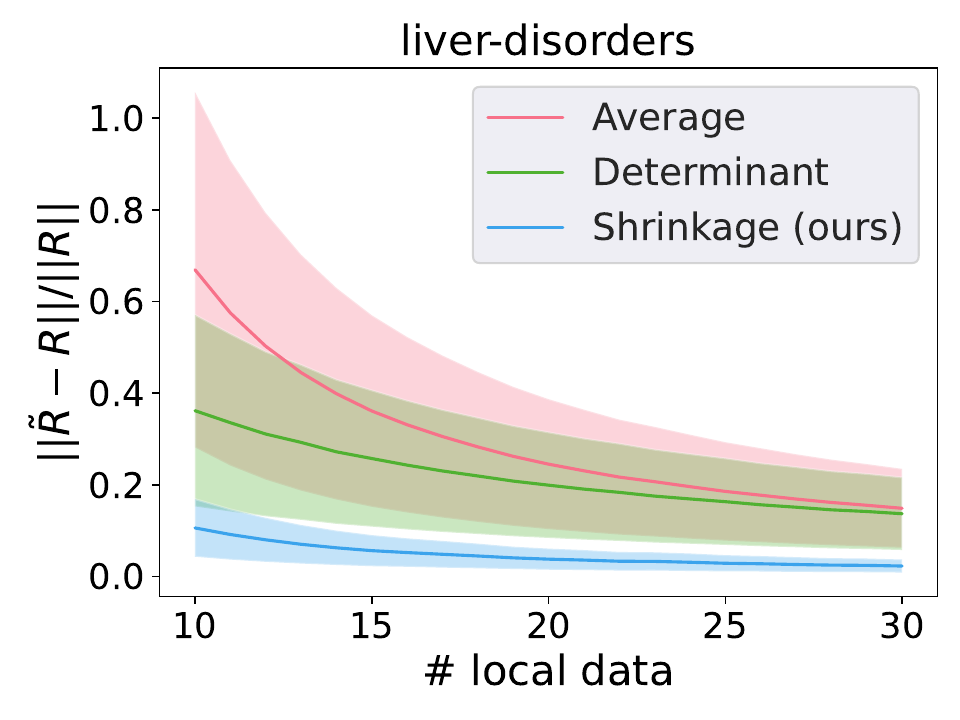}
\end{subfigure}

\caption{Sketched real data experiments on covariance resolvent estimation. We use sketching matrix with entry i.i.d. $\mathcal{N}(0,\frac{1}{m})$, sketch size $m=10000$, regularizer $\lambda=0.001$. We use $\Sigma=\frac{1}{n}A^TA$.}\label{fig7}
\end{figure}
\begin{figure}[h]
\begin{subfigure}{0.23\linewidth}
\includegraphics[width=\linewidth]{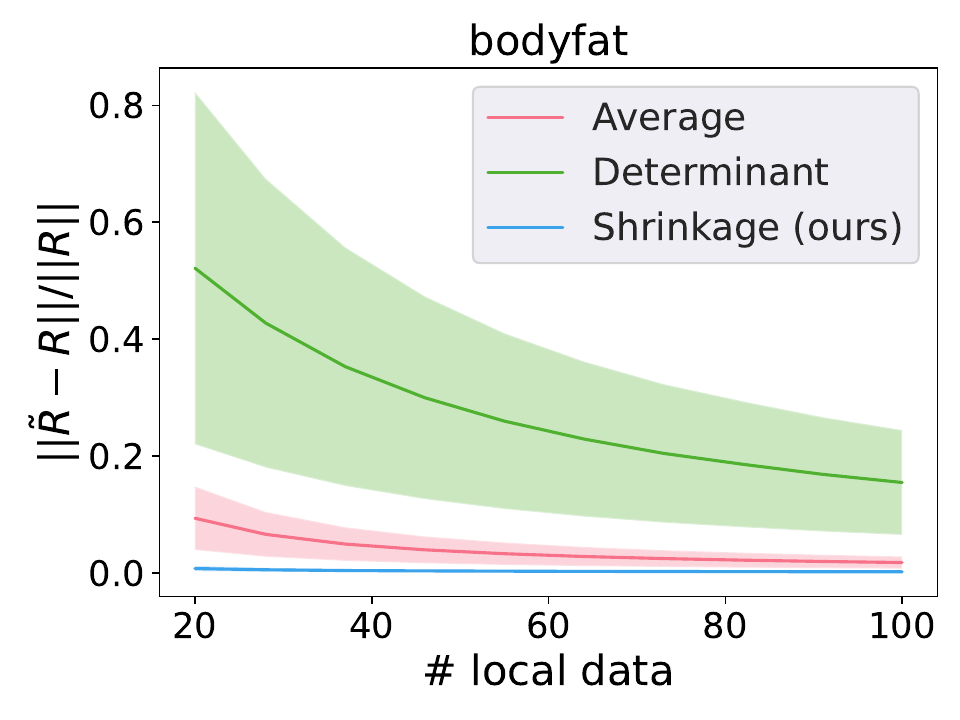}
\end{subfigure}
\hfill
\begin{subfigure}{0.23\linewidth}
\includegraphics[width=\linewidth]{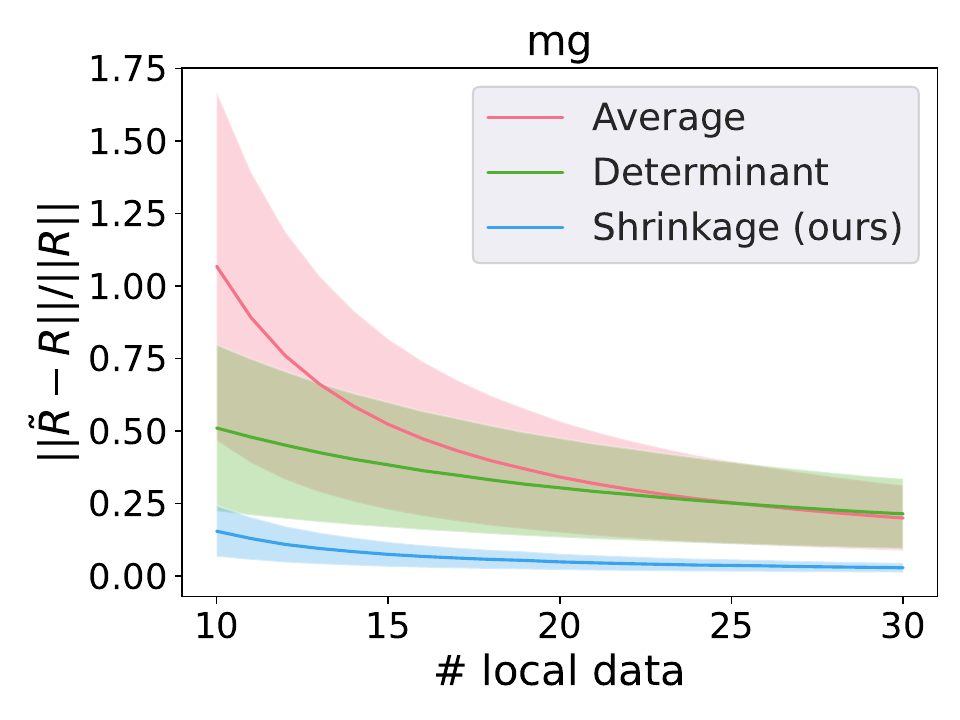}
\end{subfigure}
\hfill
\begin{subfigure}{0.23\linewidth}
\includegraphics[width=\linewidth]{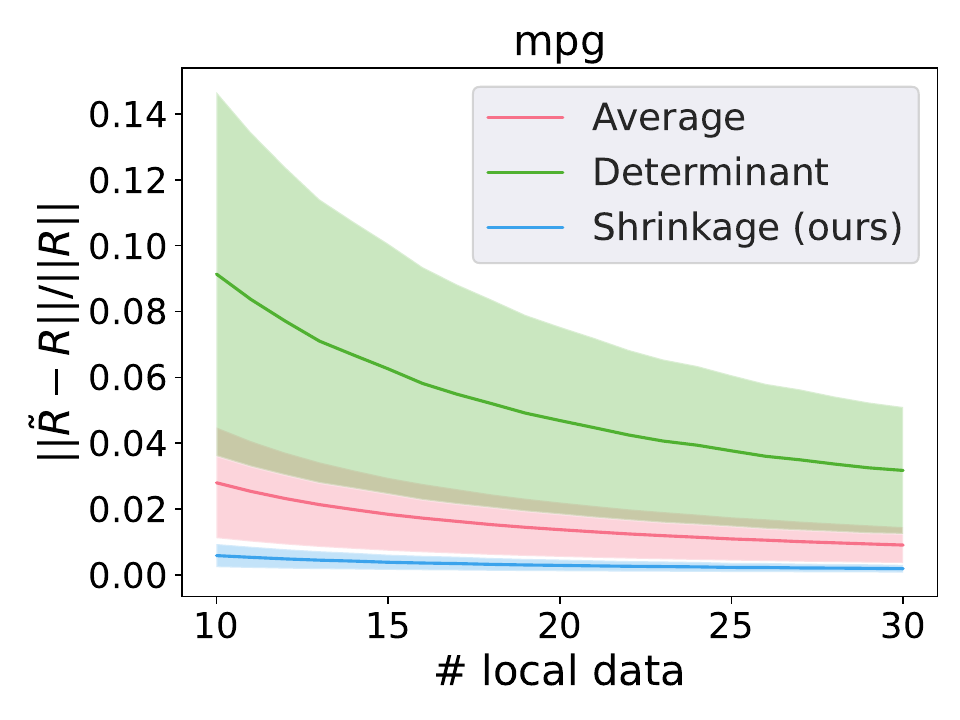}
\end{subfigure}
\hfill
\begin{subfigure}{0.23\linewidth}
\includegraphics[width=\linewidth]{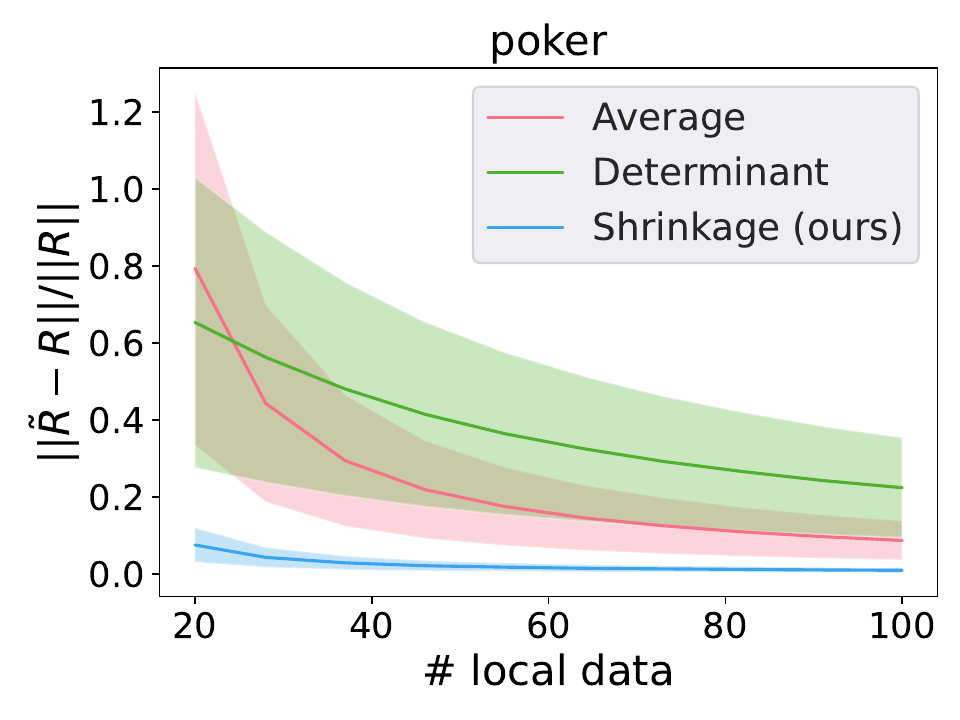}
\end{subfigure} 

\begin{subfigure}{0.23\linewidth}
\includegraphics[width=\linewidth]{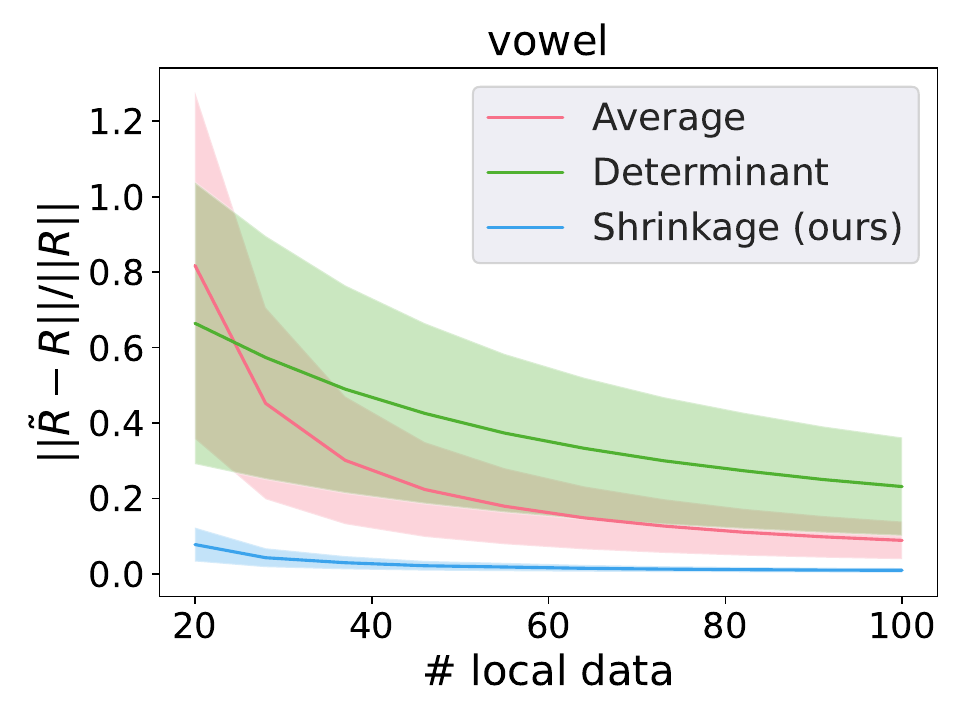}
\end{subfigure}
\hfill
\begin{subfigure}{0.23\linewidth}
\includegraphics[width=\linewidth]{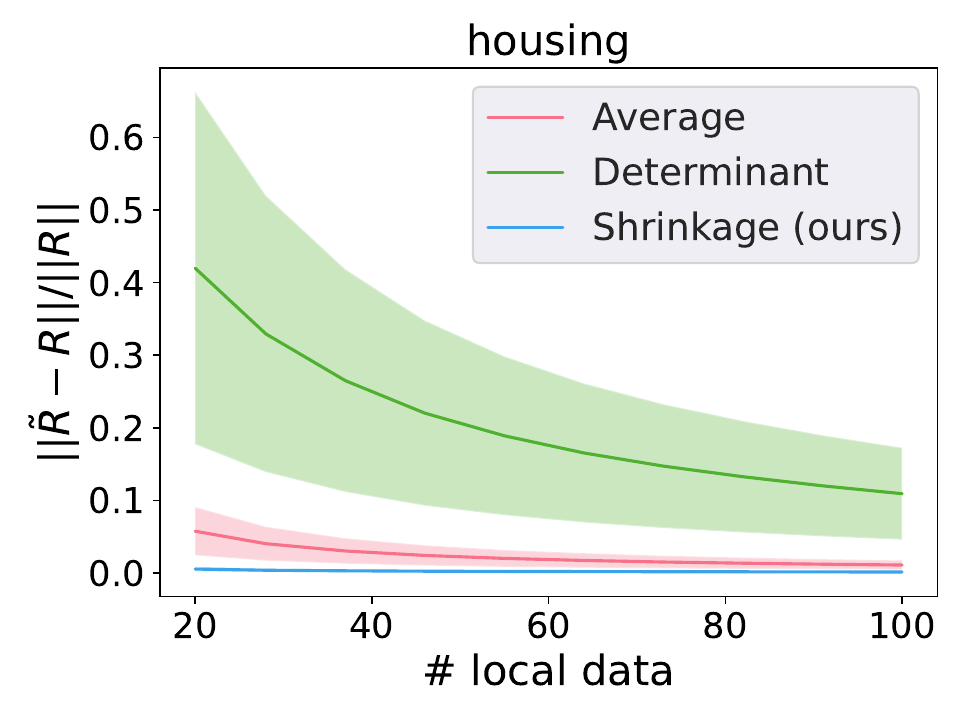}
\end{subfigure}
\hfill
\begin{subfigure}{0.23\linewidth}
\includegraphics[width=\linewidth]{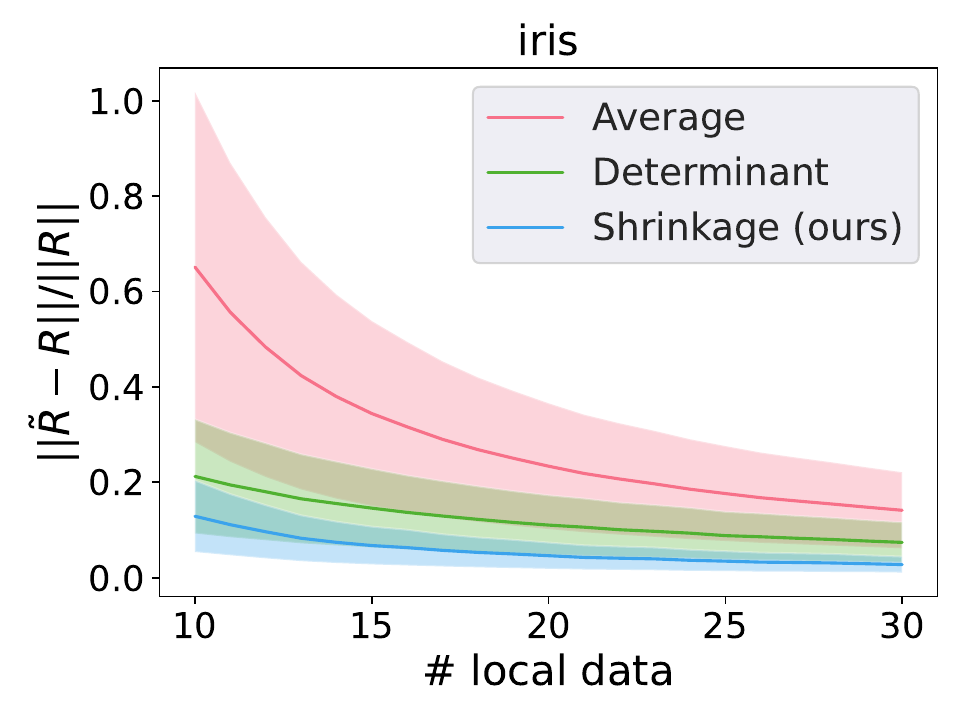}
\end{subfigure}
\hfill
\begin{subfigure}{0.23\linewidth}
\includegraphics[width=\linewidth]{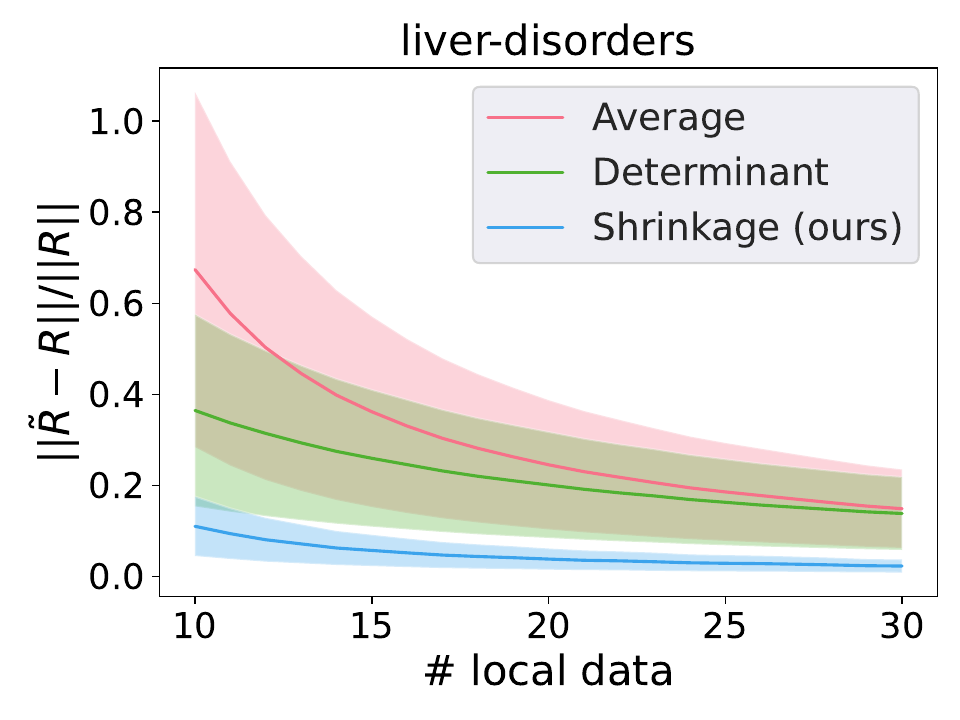}
\end{subfigure}
\caption{Sketched real data experiments on covariance resolvent estimation. We use sketching matrix with entry i.i.d. $\frac{1}{\sqrt{n}}U(-\frac{\sqrt{12}}{2},\frac{\sqrt{12}}{2})$, sketch size  $m=10000$, regularizer $\lambda=0.001$. We use $\Sigma=\frac{1}{n}A^TA$.}\label{fig8}
\end{figure}



\subsubsection{Experiments of small regularizer regime}\label{sec5.1.4append}
We test our method in the small regularizer regime discussed in Section \ref{small-regularizer}. Figure \ref{sr} shows the simulation results on synthetic data. The  results suggest that when the regularizer is large, using $\frac{md}{n}$ is much worse than using $\frac{md_\lambda}{n}$, while when the regularizer is small, $\frac{md}{n}$ can be used in place of $\frac{md_\lambda}{n}$ in computing $\tilde R_s$ and $\tilde R_s$ still approximates $R$ well, which confirms Theorem \ref{thm2.5} (see Section \ref{section5.1} for the definition of $\tilde R_s$ and $R$).
\begin{figure}[h]
\centering
\begin{subfigure}{0.23\linewidth}
\includegraphics[width=\linewidth]{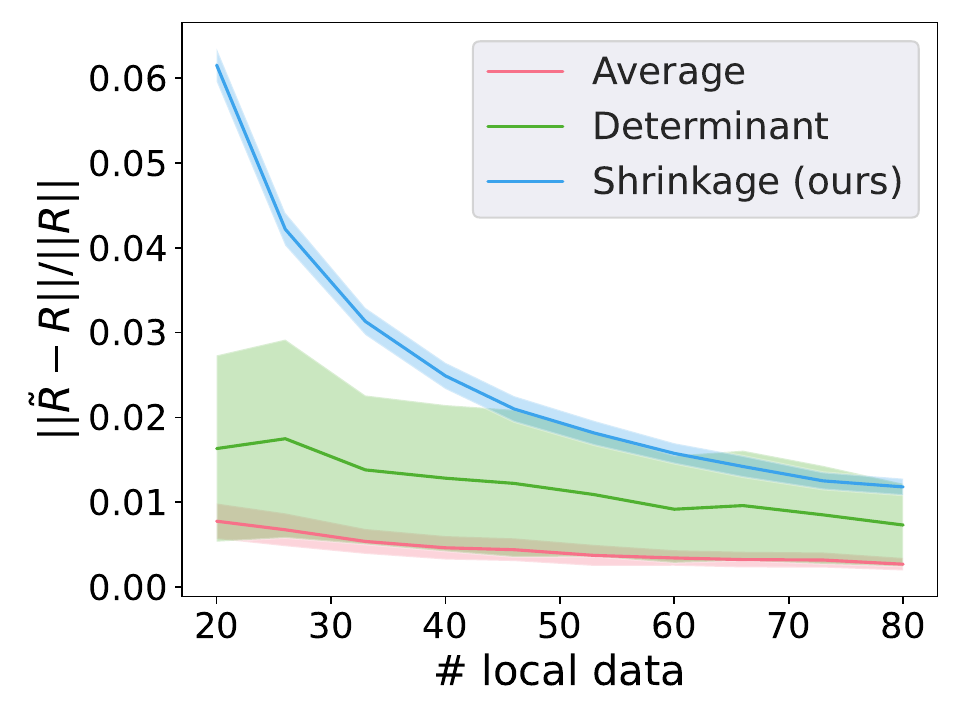}
\end{subfigure}
\begin{subfigure}{0.23\linewidth}
\includegraphics[width=\linewidth]{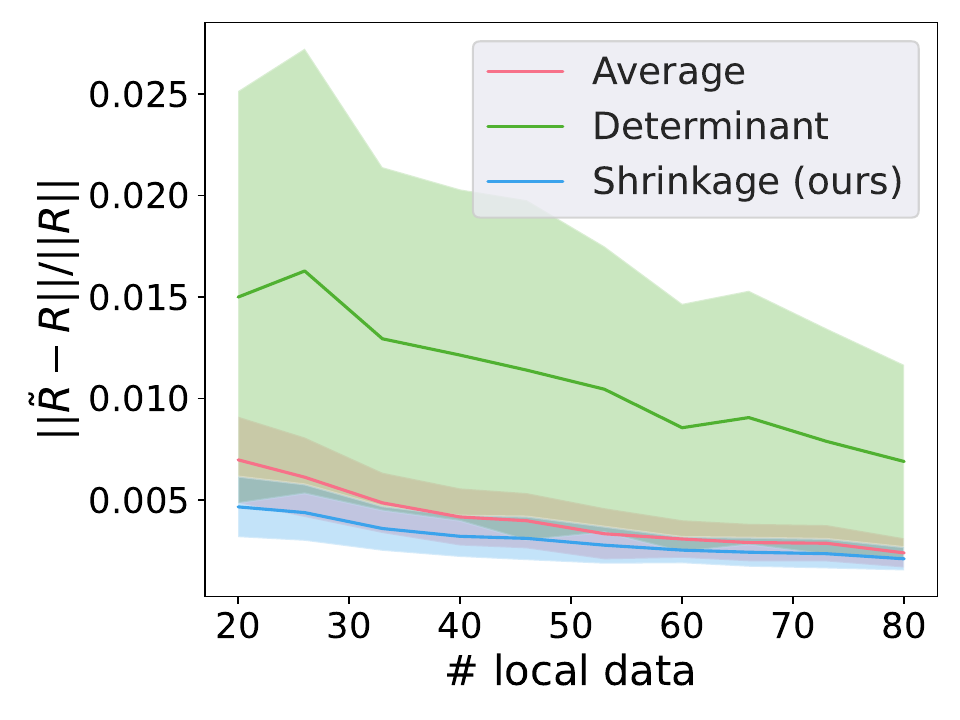}
\end{subfigure}
\begin{subfigure}{0.23\linewidth}
\centering
\includegraphics[width=\linewidth]{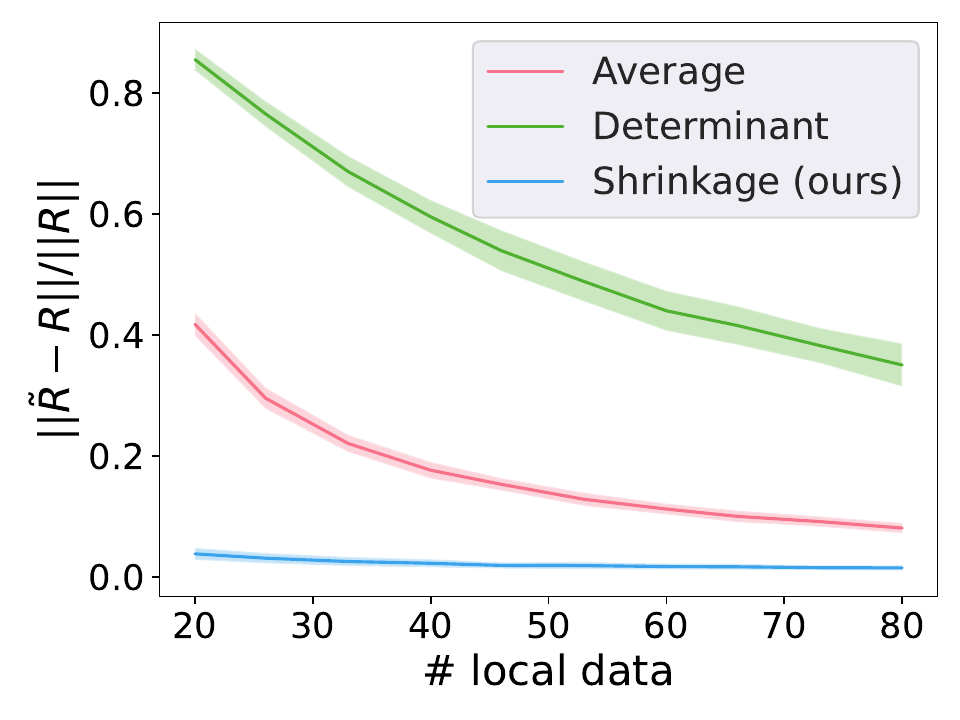}
\end{subfigure}
\caption{Synthetic data experiments for small regularizer regime. We experiment with $m=100$ agents and data dimension $d=10$. Data is i.i.d. $\mathcal{N}(0,\Sigma), \Sigma=100C^TC, C_{ij}\sim U(0,1).$ We take regularizer $\lambda=2000$ in the first two plots and $\lambda=1$ in the third plot.  $\frac{md}{n}$ is used in place of $\frac{md_\lambda}{n}$ in the first plot and  the third plot. $\frac{md_\lambda}{n}$ is used in the second plot.}\label{sr}
\end{figure}

We also test with sketched real data. Figure \ref{sr2} shows the result for sketched abalone dataset, which is similar to the synthetic data result.

\begin{figure}[H]
\centering
\begin{subfigure}{0.23\linewidth}
\includegraphics[width=\linewidth]{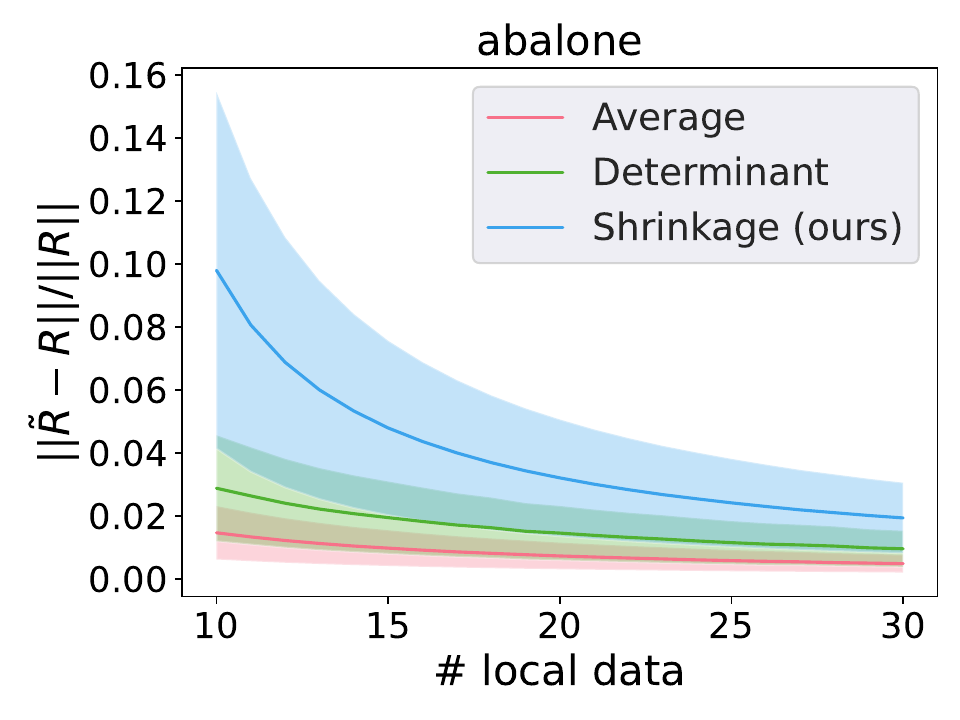}
\end{subfigure}
\begin{subfigure}{0.23\linewidth}
\includegraphics[width=\linewidth]{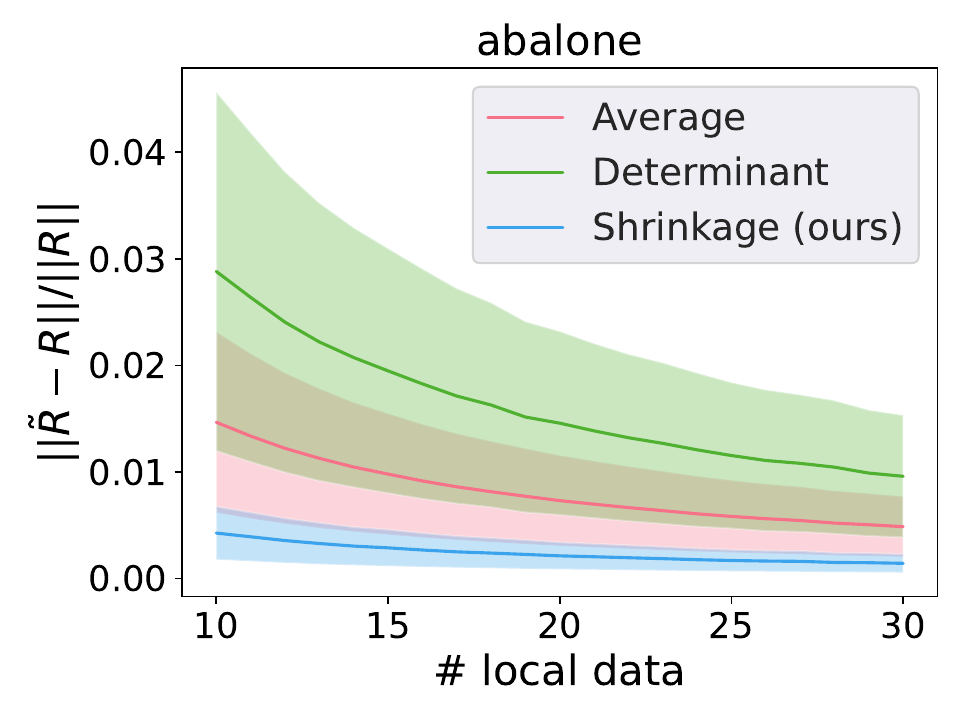}
\end{subfigure}
\begin{subfigure}{0.23\linewidth}
\includegraphics[width=\linewidth]{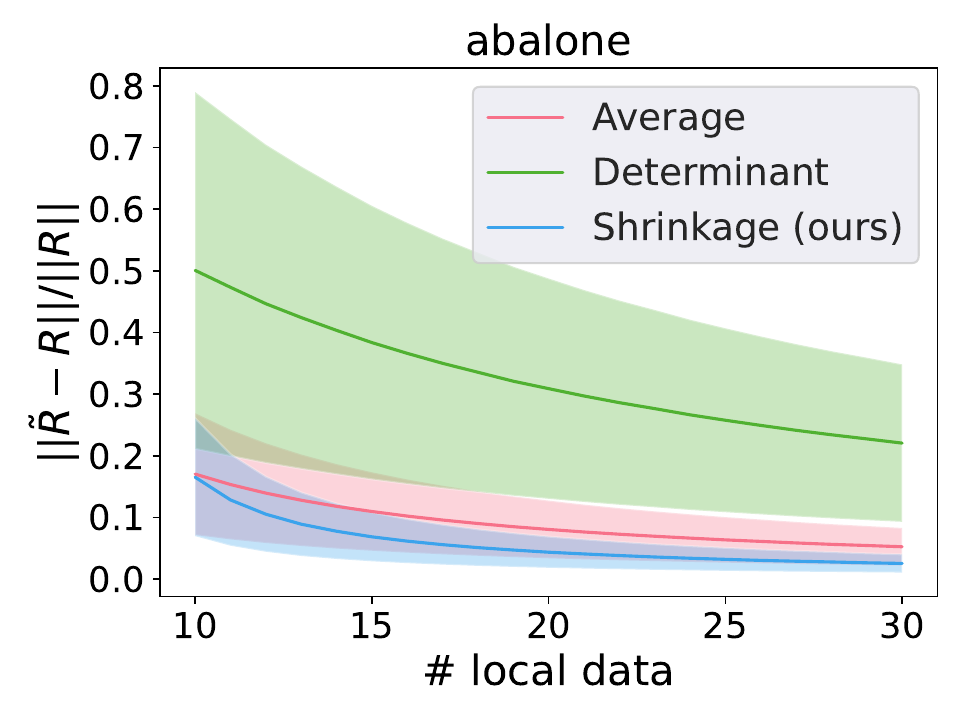}
\end{subfigure}

\caption{Sketched real data experiments for small regularizer regime. We use sketching matrix with entry i.i.d. $\mathcal{N}(0,\frac{1}{m})$, sketch size $m=10000$. We use $\Sigma=\frac{1}{n}A^TA$. We take regularizer $\lambda=1$ in the first two plots and $\lambda=0.001$ in the third plot.  $\frac{md}{n}$ is used in place of $\frac{md_\lambda}{n}$ in the first plot and  the third plot. $\frac{md_\lambda}{n}$ is used in the second plot.}\label{sr2}
\end{figure}

\subsection{Supplementary Simulation for Section \ref{section5.2}}
\subsubsection{Additional Experiments on Distributed newton's  method}\label{c.2.1append}
Here we give additional simulation results on distributed Newton's method for minimizing regularized quadratic loss with normalized real datasets. See Section \ref{section5.2} for a description of the setup and different methods being compared.  Figure \ref{fig11} shows the superiority of our method for saving communication rounds in  distributed Newton's method.
\begin{figure}[h]
\begin{subfigure}{0.23\linewidth}
\includegraphics[width=\linewidth]{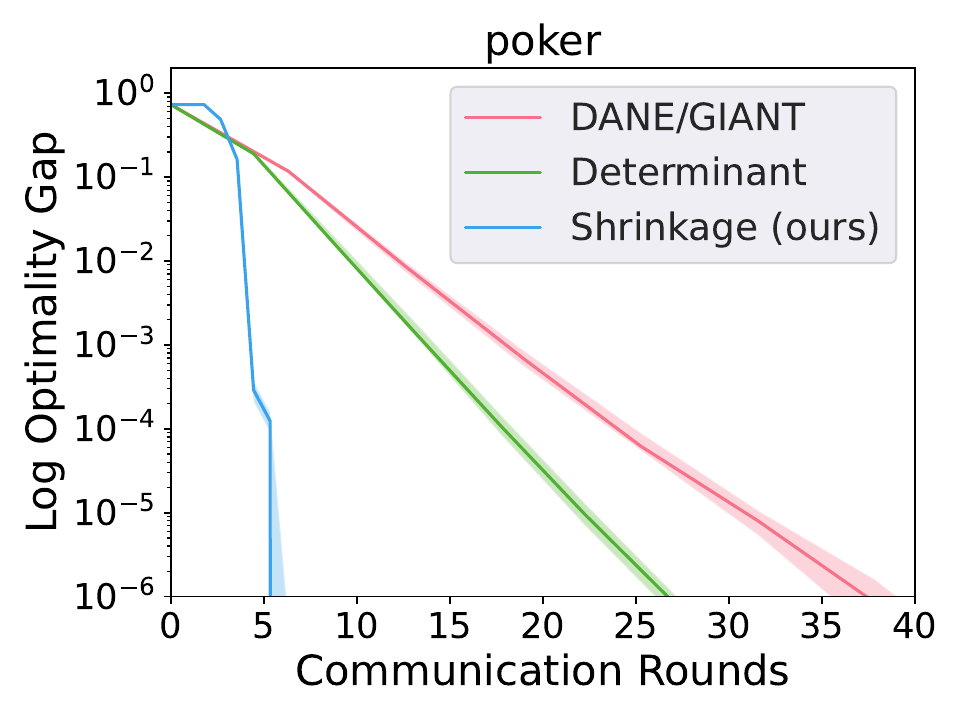}
\end{subfigure}
\hfill
\begin{subfigure}{0.23\linewidth}
\includegraphics[width=\linewidth]{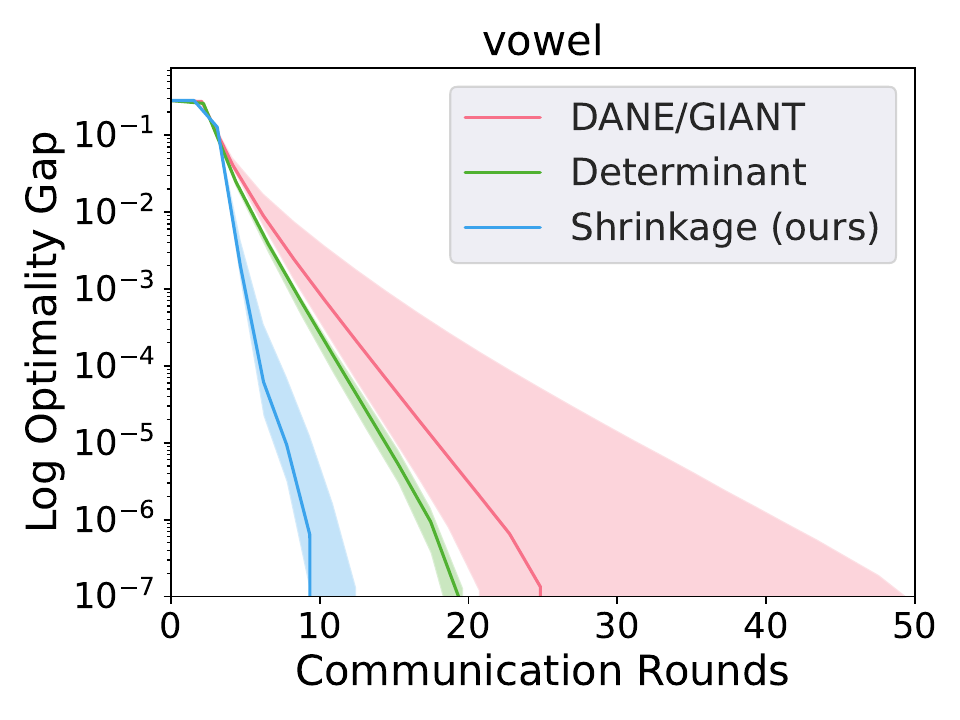}
\end{subfigure}
\hfill
\begin{subfigure}{0.23\linewidth}
\includegraphics[width=\linewidth]{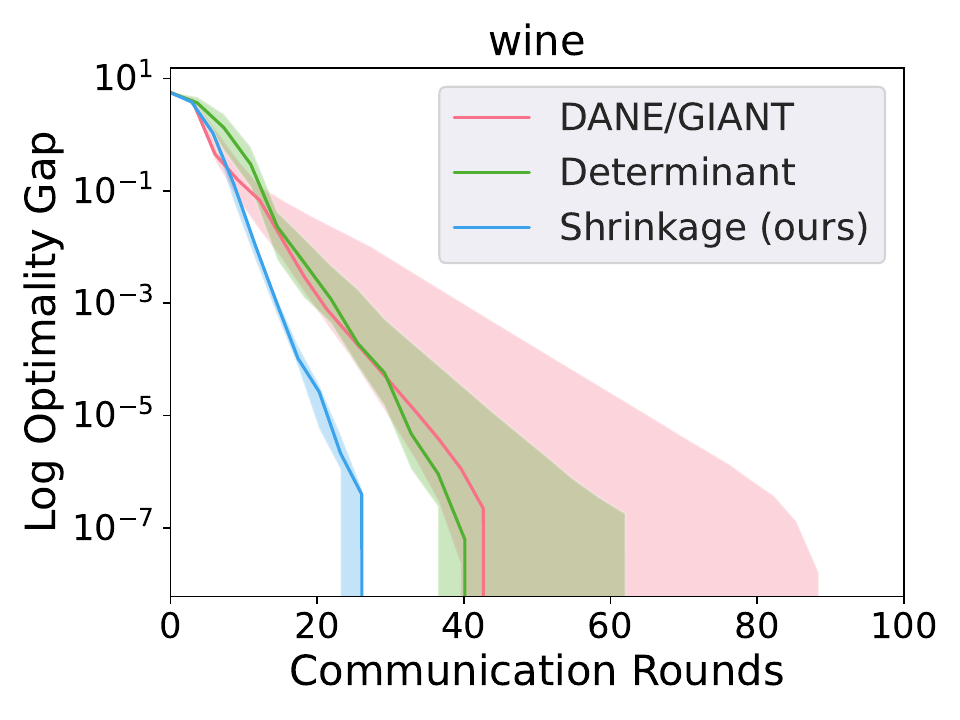}
\end{subfigure}
\hfill
\begin{subfigure}{0.23\linewidth}
\includegraphics[width=\linewidth]{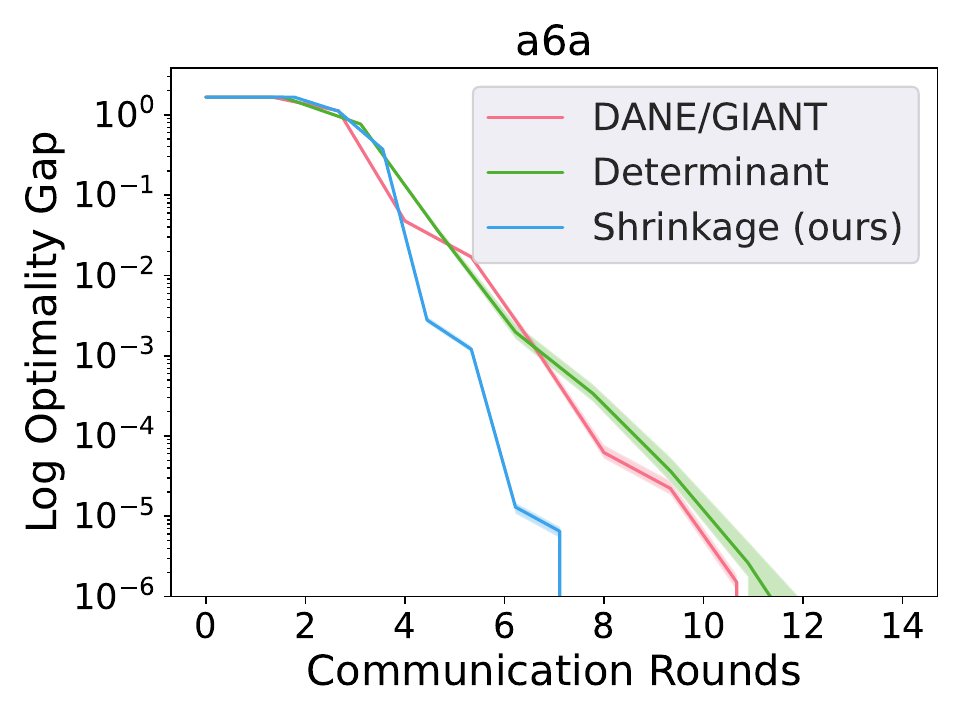}
\end{subfigure}
\caption{Normalized real data experiments on distributed Newton's method for ridge regression. Number of total data is rounded down as multiple of number of agents and number of local data is number of total data divided by number of agents. Let $\lambda$ denote the regularizer and $m$ denote the number of agents. We pick  $m=1000,\lambda=0.001$ for poker, $m=20,\lambda=0.05$ for vowel, $m=10,\lambda=0.01$ for wine, $m=50,\lambda=0.5$ for a6a.
}\label{fig11}
\end{figure}

\subsubsection{Experiments on Distributed Inexact newton's method}\label{c.2.2append}
Figure \ref{pcg-ridge-real} shows simulation results for distributed inexact Newton's method.  The algorithm of our method is given in Section \ref{section3}. We compare with DiSCO, where Hessian inverse of the first agent is used as the preconditioning matrix in distributed preconditioned conjugate gradient method\footnote{For the implementation of DiSCO, we only borrow its preconditioner and don't use its initialization step and step size choice since they are too specific.}. Average method is taking the average of local Hessian inverses as the preconditioning matrix. Determinant method is using exactly the same approximate Hessian inverse as in the  determinantal averaging method discussed in Section \ref{prior-work} as the preconditioning matrix. We do not compare with conjugate gradient method since it usually takes many more steps to converge. Determinant method is not plotted whenever we encounter numerical stability issues.

From the plots, we see that compared to averaging method, DiSCO, and determinantal averaging method, our  shrinkage method achieves smaller log optimality gap in fewer rounds of communication on the datasets we have tested, which suggests our shrinkage method is approximating Hessian inverse more accurately. Another takeaway is that determinantal averaging method is unstable when data  dimension is large and computing determinant becomes infeasible, while our shrinkage method is always valid as long as the local data size is larger than effective dimension of local Hessian matrix. Note for minimizing quadratic loss, inexact Newton's method usually converges in one Newton step, and thus the discrepancy for different methods are smaller in these plots compared to distributed Newton's method's simulation plots.

\begin{figure}[H]
\begin{subfigure}{0.23\linewidth}
\includegraphics[width=\linewidth]{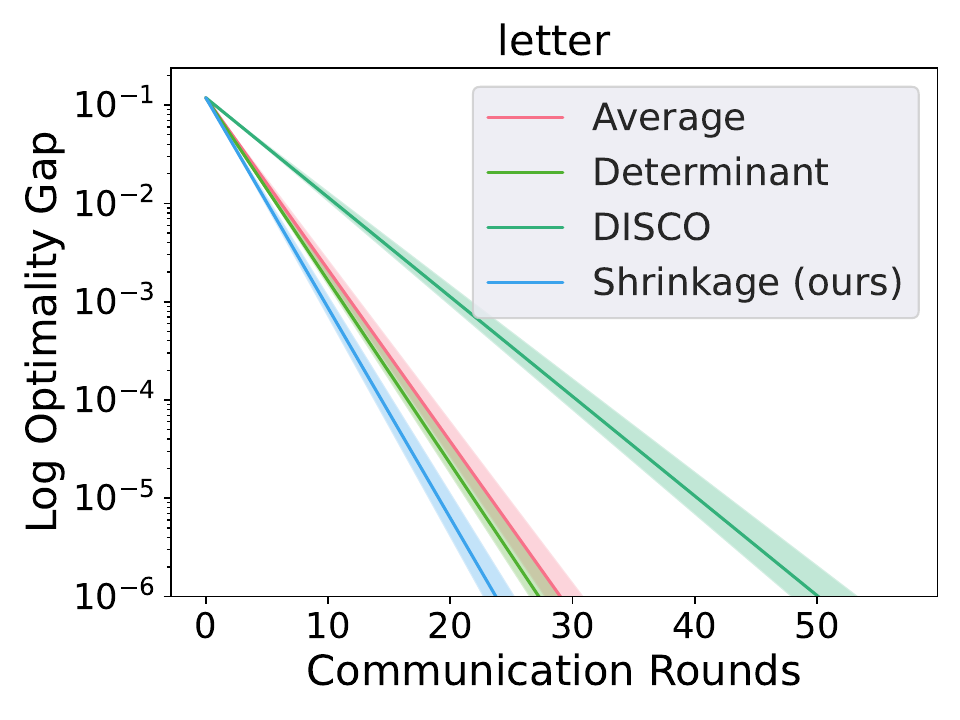}
\end{subfigure}
\hfill
\begin{subfigure}{0.23\linewidth}
\includegraphics[width=\linewidth]{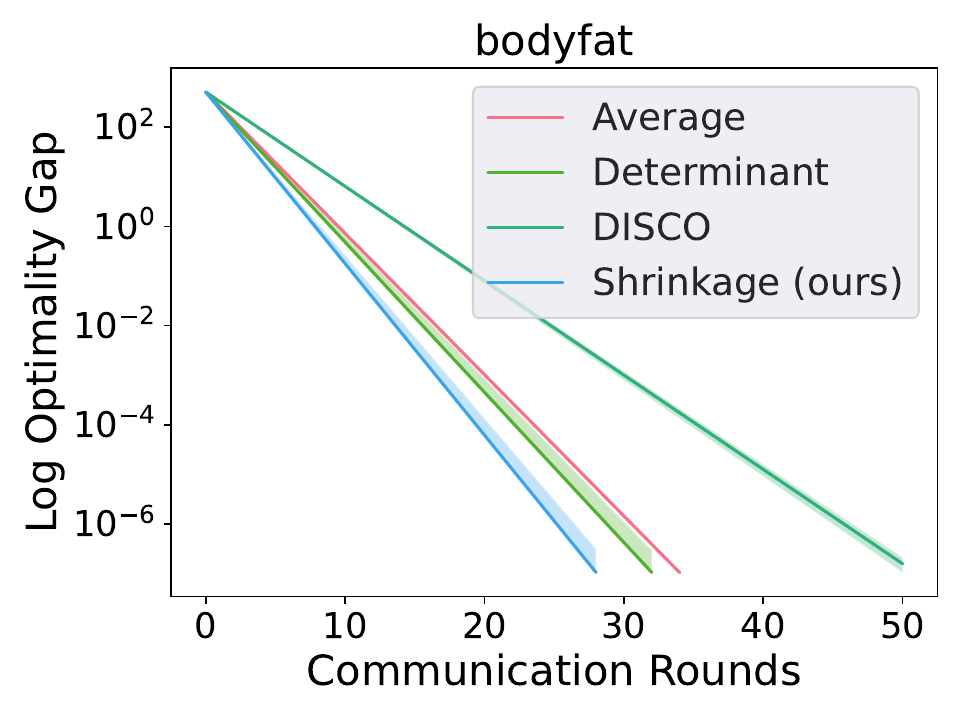}
\end{subfigure}
\hfill
\begin{subfigure}{0.23\linewidth}
\includegraphics[width=\linewidth]{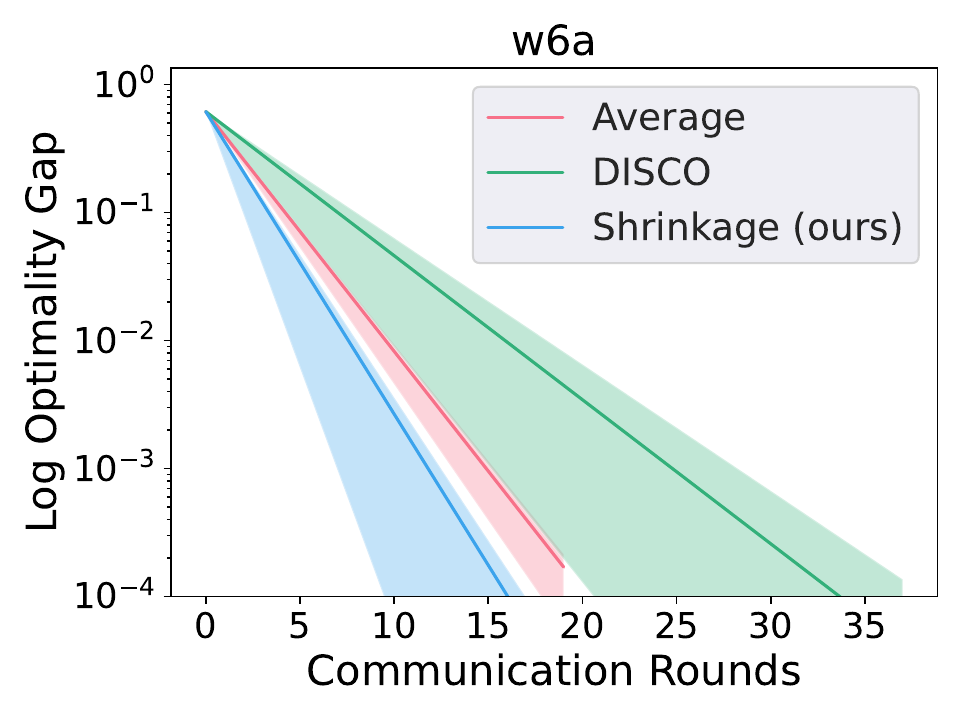}
\end{subfigure}
\hfill
\begin{subfigure}{0.23\linewidth}
\includegraphics[width=\linewidth]{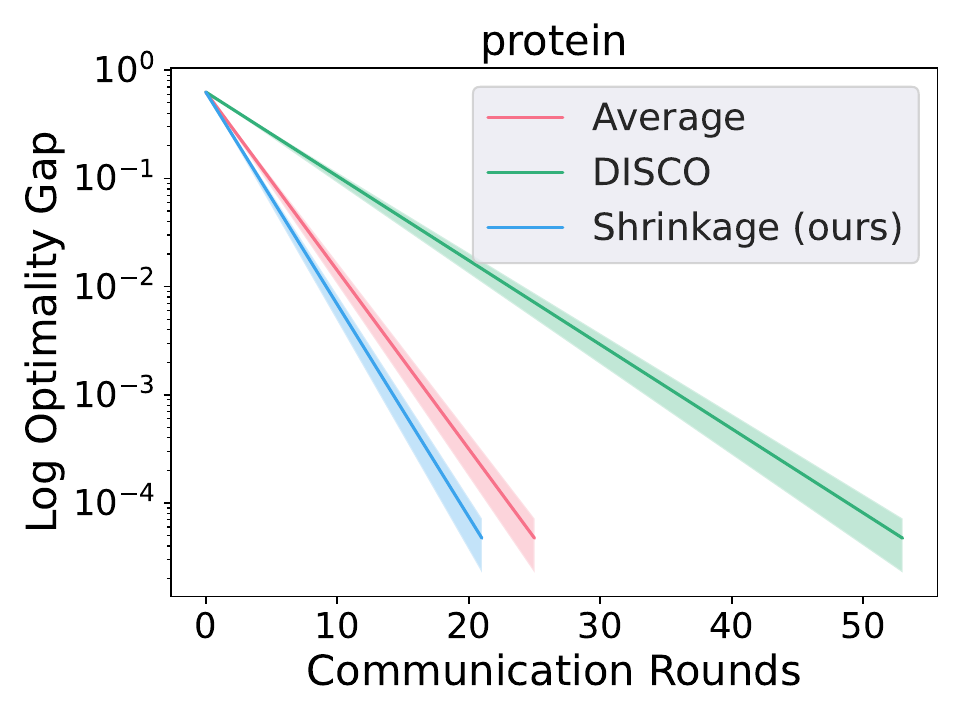}
\end{subfigure}

\begin{subfigure}{0.23\linewidth}
\includegraphics[width=\linewidth]{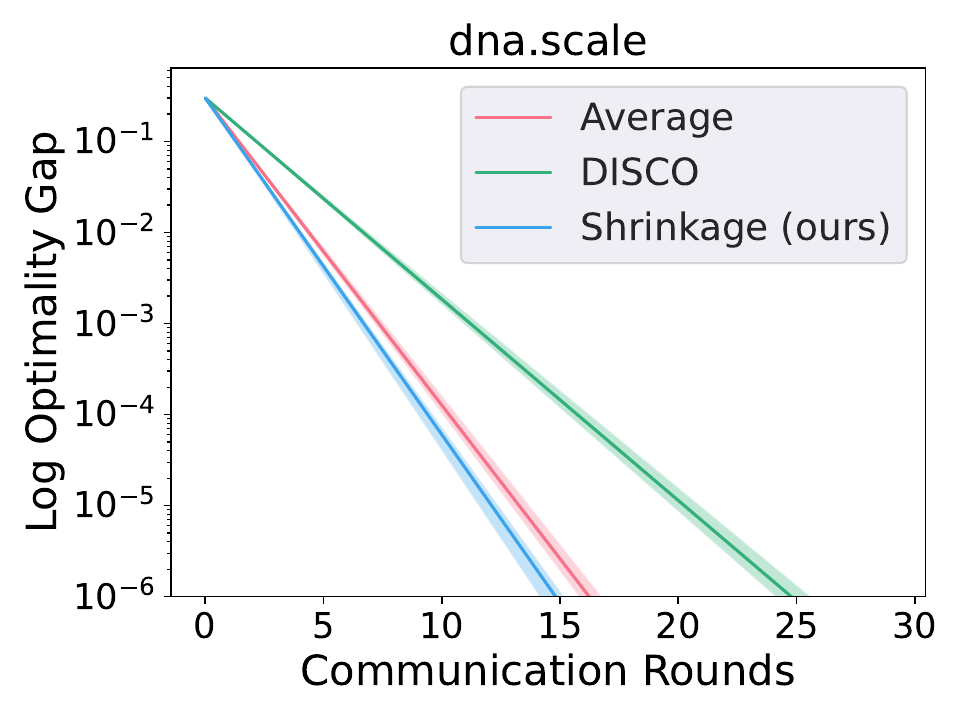}
\end{subfigure}
\hfill
\begin{subfigure}{0.23\linewidth}
\includegraphics[width=\linewidth]{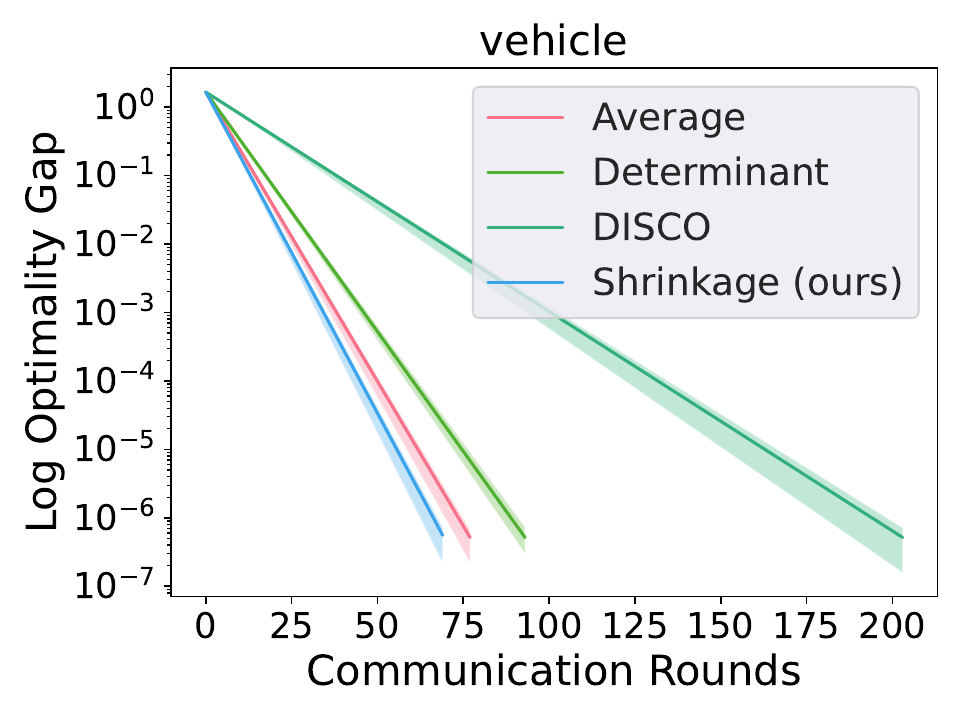}
\end{subfigure}
\hfill
\begin{subfigure}{0.23\linewidth}
\includegraphics[width=\linewidth]{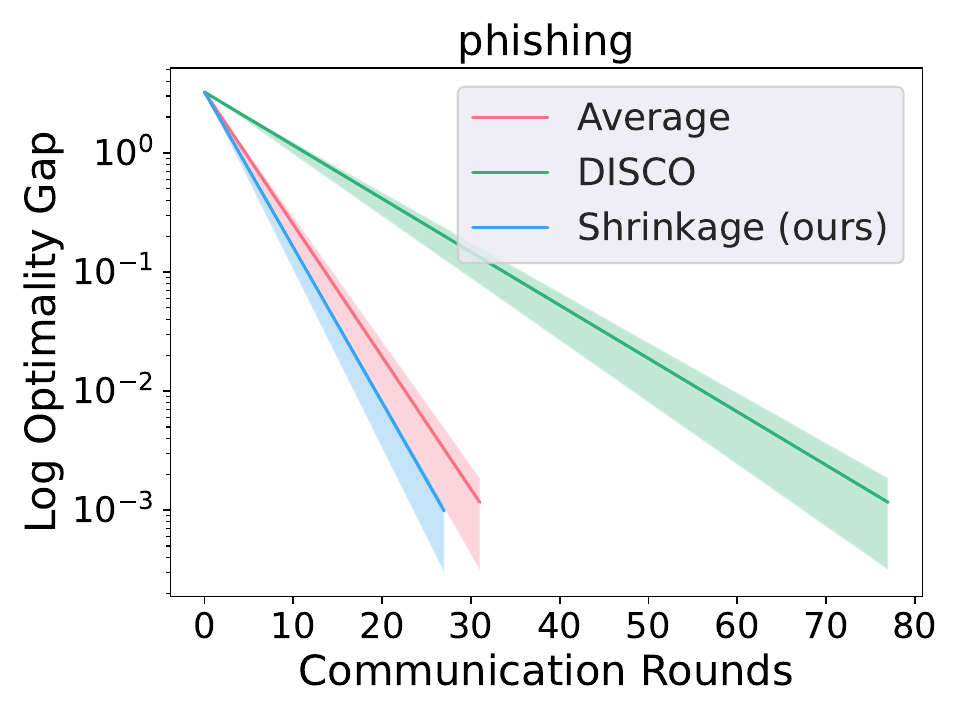}
\end{subfigure}
\hfill
\begin{subfigure}{0.23\linewidth}
\includegraphics[width=\linewidth]{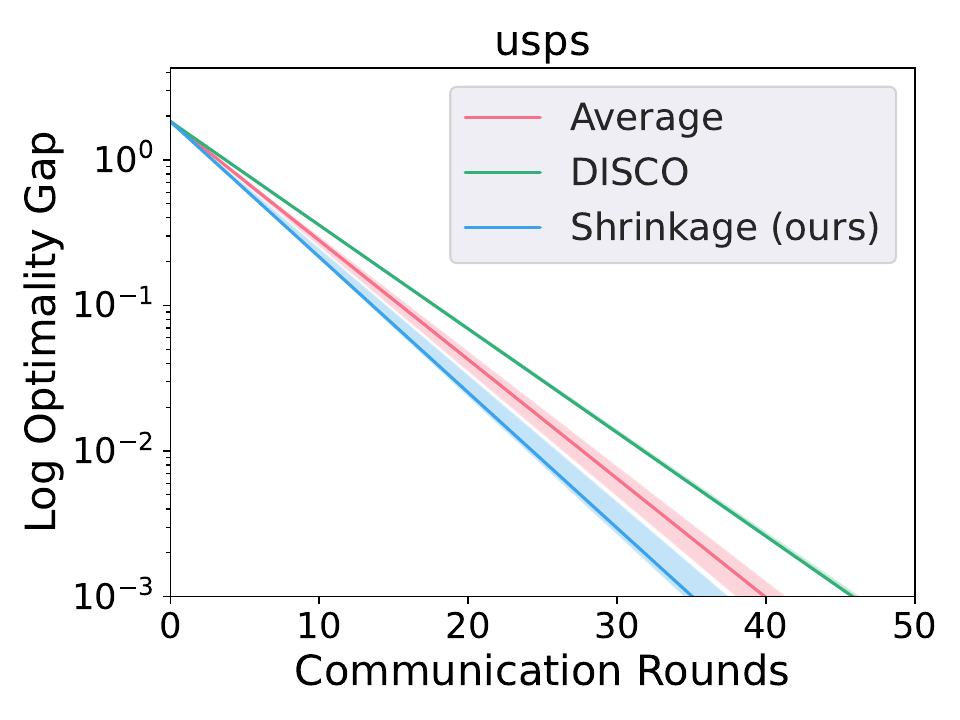}
\end{subfigure}
\hfill
\caption{Normalized real data experiments on distributed inexact Newton's method for ridge regression.  Number of total data is rounded down as an integer multiple of number of agents and number of local data equals the number of total data divided by the number of agents. Let $\lambda$ denote the regularizer and $m$ denote the number of agents. We pick $m=1000,\lambda=0.1$ for letter, $m=20,\lambda=0.01$ for bodyfat, $m=30,\lambda=1$ for  w6a, $m=50,\lambda=0.1$ for protein and usps,   $m=50,\lambda=10$ for dna.scale, $m=40,\lambda=1$ for vehicle, $m=200,\lambda=0.01$ for phishing.
}\label{pcg-ridge-real}
\end{figure}
\subsubsection{Experiments on Logistic Regression}\label{logit}
We give simulation results for distributed Newton's method and distributed inexact Newton's method for logistic regression on normalized real datasets.  See Section \ref{section5.2} for a description of the setup and different methods being compared for distributed Newton's method. See Appendix \ref{c.2.2append} for a description of methods being compared for distributed inexact Newton's method.
According to our convergence analysis for non-quadratic loss in Section \ref{cvx}, we need each Newton's step to operate  on data independent of previous Newton's steps. Therefore we are taking fresh data batches for computing each Newton's step. Determinantal averaging method does not appear in the inexact Newton's method plots since the method failes due to numerical issues.

According to Figure \ref{logit-fig}, our shrinkage method frequently saves communication rounds for both Newton's method and inexact Newton's method compared to other methods, though the discrepancy is not as significant for the case with quadratic loss. Moreover, a larger variance in the performance of Newton's method is observed. This might be due to a small batch size used in each worker. We believe that our shrinkage method can be optimized further for non-quadratic losses, which is left as future work.
\begin{figure}[H]
\begin{subfigure}{0.23\linewidth}
\includegraphics[width=\linewidth]{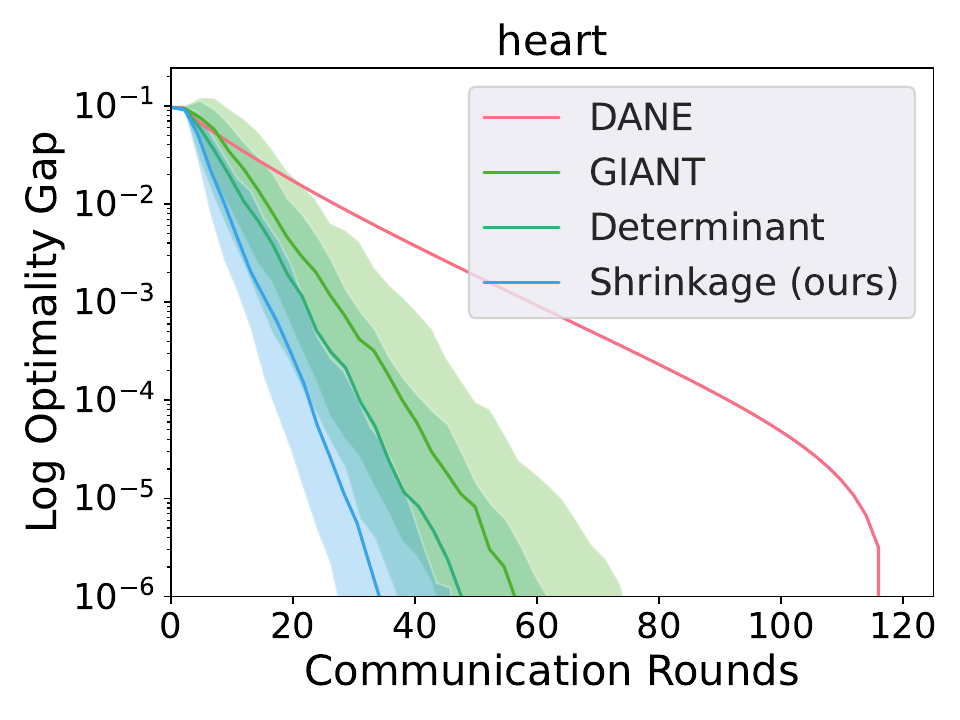}
\end{subfigure}
\hfill
\begin{subfigure}{0.23\linewidth}
\includegraphics[width=\linewidth]{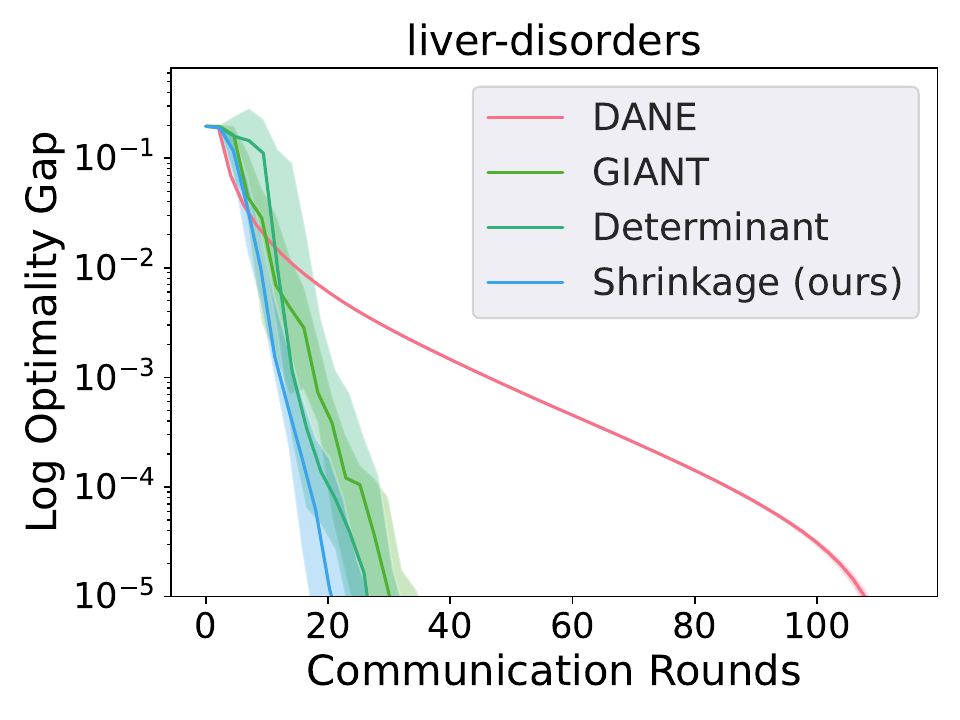}
\end{subfigure}
\hfill
\begin{subfigure}{0.23\linewidth}
\includegraphics[width=\linewidth]{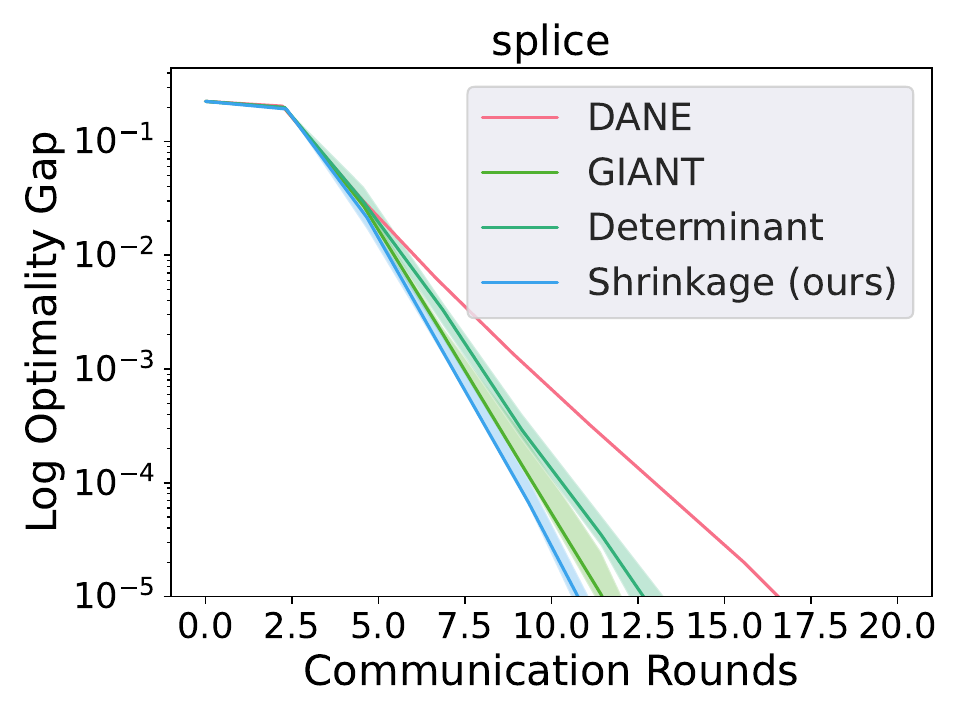}
\end{subfigure}
\hfill
\begin{subfigure}{0.23\linewidth}
\includegraphics[width=\linewidth]{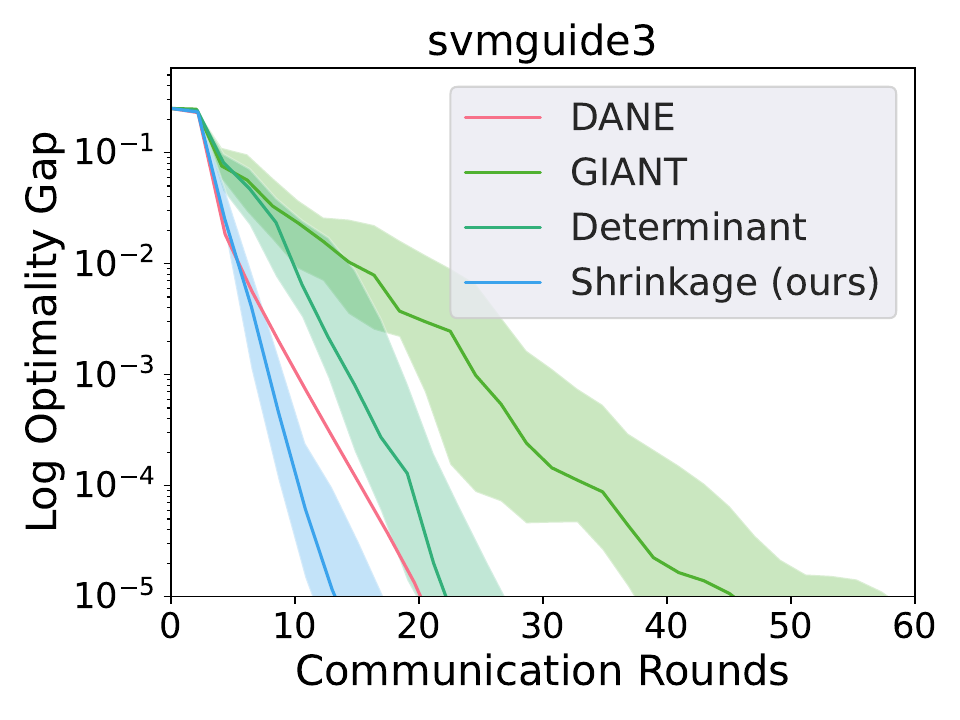}
\end{subfigure}

\begin{subfigure}{0.23\linewidth}
\includegraphics[width=\linewidth]{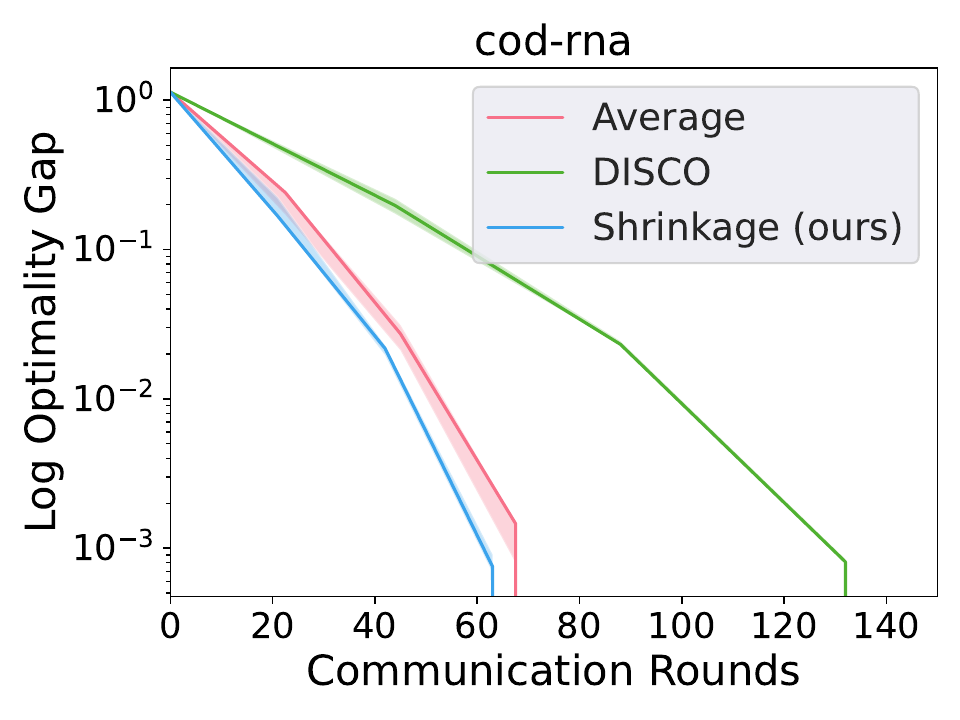}
\end{subfigure}
\hfill
\begin{subfigure}{0.23\linewidth}
\includegraphics[width=\linewidth]{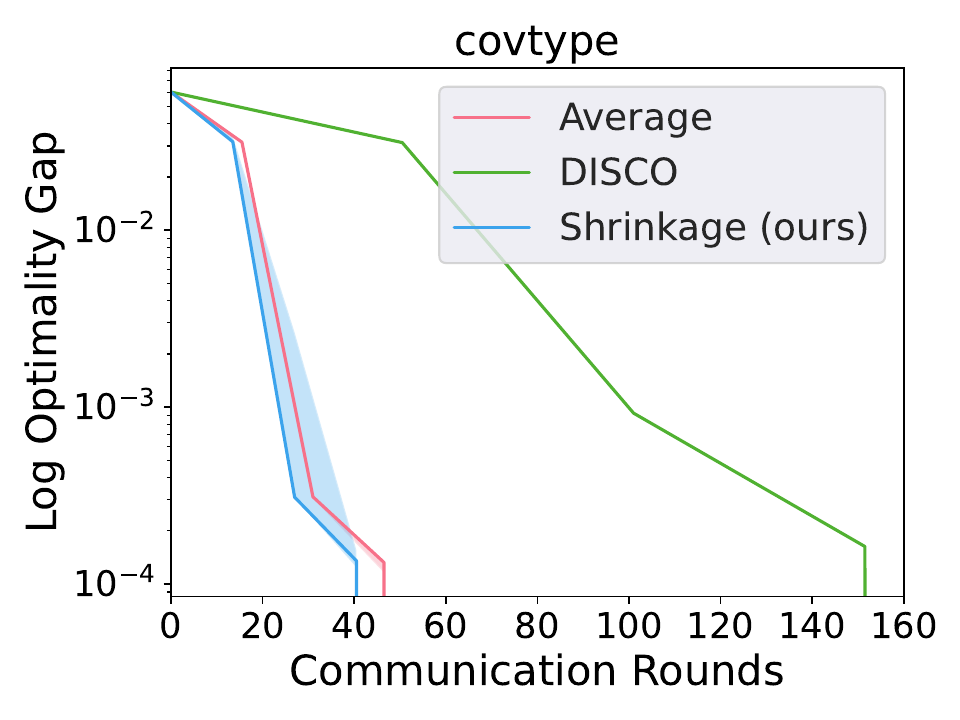}
\end{subfigure}
\hfill
\begin{subfigure}{0.23\linewidth}
\includegraphics[width=\linewidth]{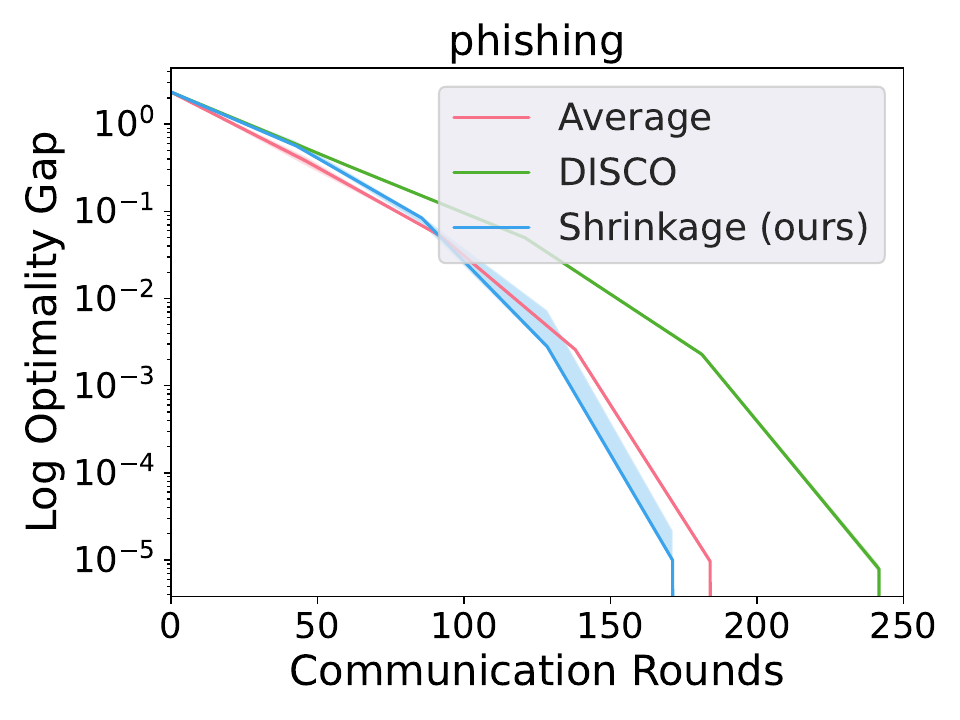}
\end{subfigure}
\hfill
\begin{subfigure}{0.23\linewidth}
\includegraphics[width=\linewidth]{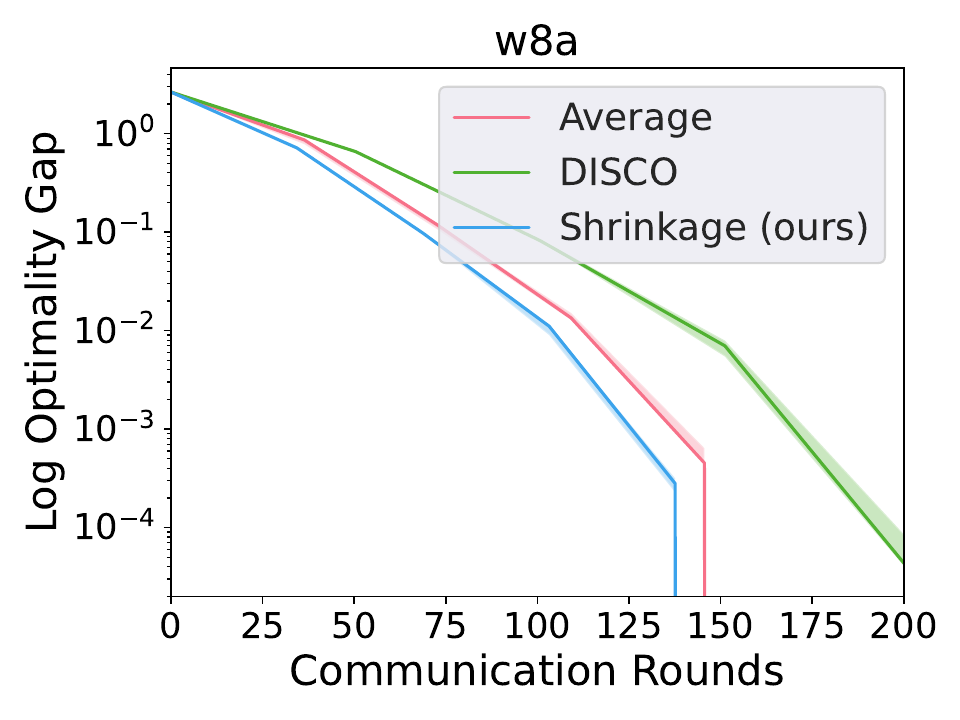}
\end{subfigure}
\caption{Normalized real data experiments on distributed second-order optimization algorithms for logistic regression. Top four plots are for distributed Newton's method and bottom four plots are for distributed inexact Newton's method. Number of total data is rounded down as multiple of number of agents and number of local data is number of total data divided by number of agents. For each Newton's step, a refreshed data batch containing the number of local data divided by max$\_$iters pieces of data is used (we limit number of Newton's step to not exceed max$\_$iters). Let $\lambda$ denote the regularizer and $m$ denote the number of agents. We pick $m=5,\lambda=0.01, $max$\_$iters$=20$ for heart, $m=2,\lambda=0.01, $max$\_$iters$=10$ for liver-disorders, $m=3,\lambda=0.1, $max$\_$iters$=5$ for splice, $m=10,\lambda=0.1, $max$\_$iters$=20$ for svmguide3, $m=100,\lambda=1e-5, $max$\_$iters$=50$ for cod-rna, $m=200,\lambda=1e-5, $max$\_$iters$=50$ for covtype, $m=40,\lambda=0.01, $max$\_$iters$=50$ for phishing, $m=50,\lambda=0.1, $max$\_$iters$=50$ for w8a.}\label{logit-fig}
\end{figure}
\subsubsection{Experiments on Iterative Hessian Sketch With Optimal Shrinkage}\label{ihs-append}
We only present the  simulation plots for IHS and IHS with  shrinkage where a heuristic shrinkage coefficient is used in the main text in Section \ref{section5.3}.  Here we provide more plots on these two methods and we also provide the exact version for IHS with optimal shrinkage. For sake of comparison, we include the plots in the main text here again.

Figure  \ref{ihs-figure1} presents our simulation results on IHS with shrinkage where the effective dimension of sketched data is used. From the  figure, we see that IHS equipped with  shrinkage method beats the classic IHS method in datasets we experimented with, though for datasets bodyfat, housing, mpg and triazines, the difference between these two methods are more obvious. Figure \ref{ihs-figure2} presents the plots for IHS with the exact optimal shrinkage coefficient, on the same datasets choice and with the same parameter choice. Still, IHS with shrinkage method beats the classic IHS method in 
 datasets we have tested on. But this time, the difference between the two methods are obvious for all datasets. If we look at Figure \ref{ihs-figure1} and Figure \ref{ihs-figure2} together, the performance of IHS with the exact shrinkage coefficient is at least as good as IHS with the heuristic shrinkage coefficient, which is as expected, though the heuristic version does not worsen the performance much such that IHS with heuristic shrinkage still remains superior compared to the classic IHS method.
\begin{figure}[H]
\begin{subfigure}{0.23\linewidth}
\includegraphics[width=\linewidth]{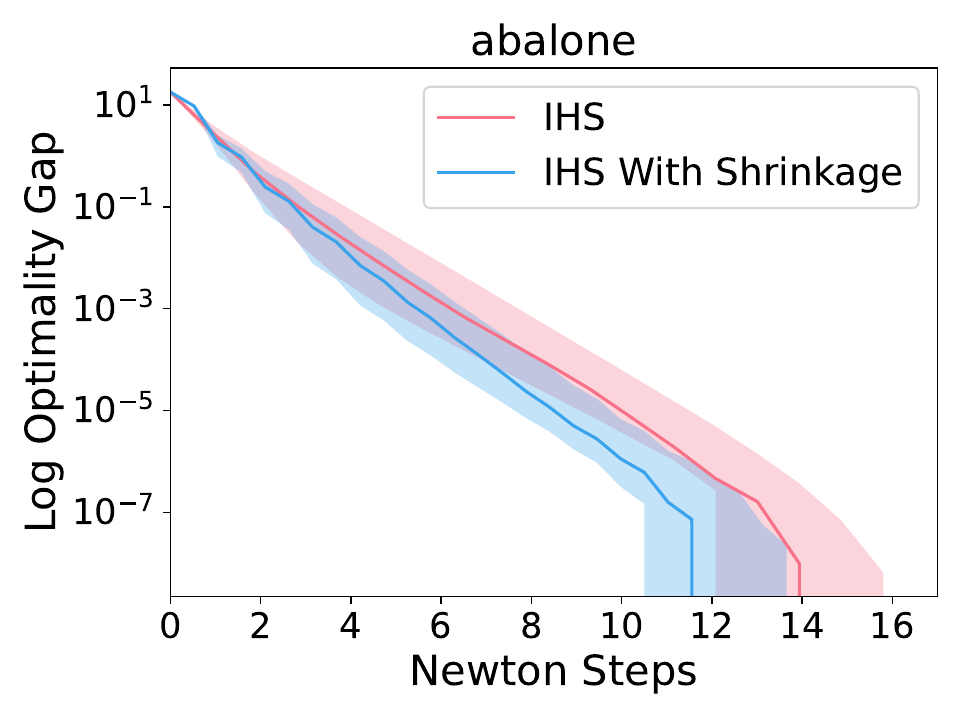}
\end{subfigure}
\hfill
\begin{subfigure}{0.23\linewidth}
\includegraphics[width=\linewidth]{ihs-bodyfat-16-www.pdf}
\end{subfigure}
\hfill
\begin{subfigure}{0.23\linewidth}
\includegraphics[width=\linewidth]{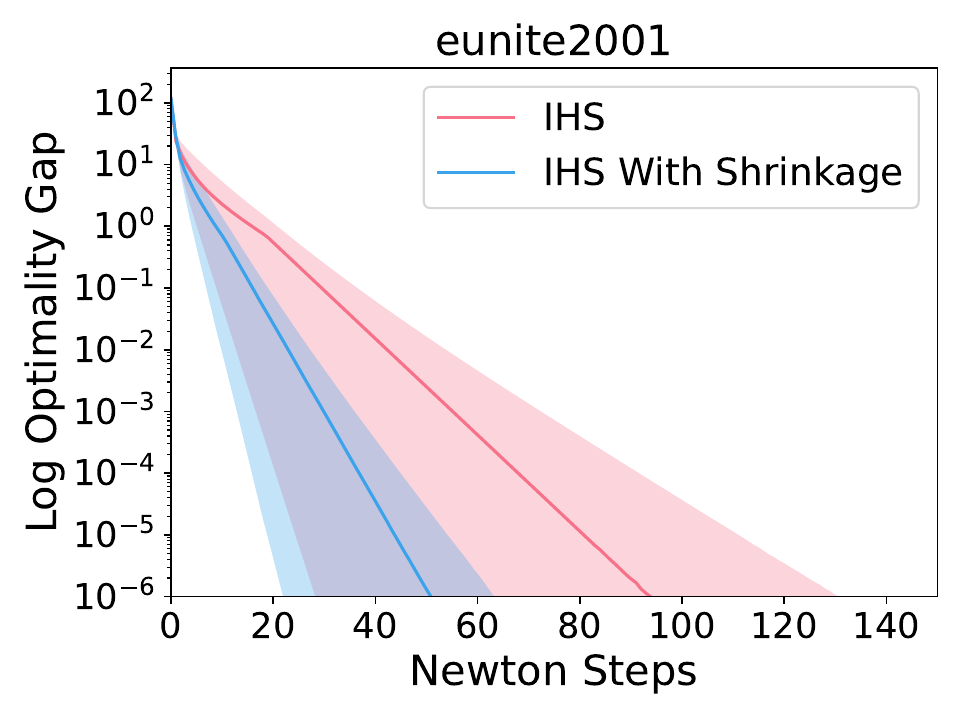}
\end{subfigure}
\hfill
\begin{subfigure}{0.23\linewidth}
\includegraphics[width=\linewidth]{ihs-housing-16-www.pdf}
\end{subfigure}

\begin{subfigure}{0.23\linewidth}
\includegraphics[width=\linewidth]{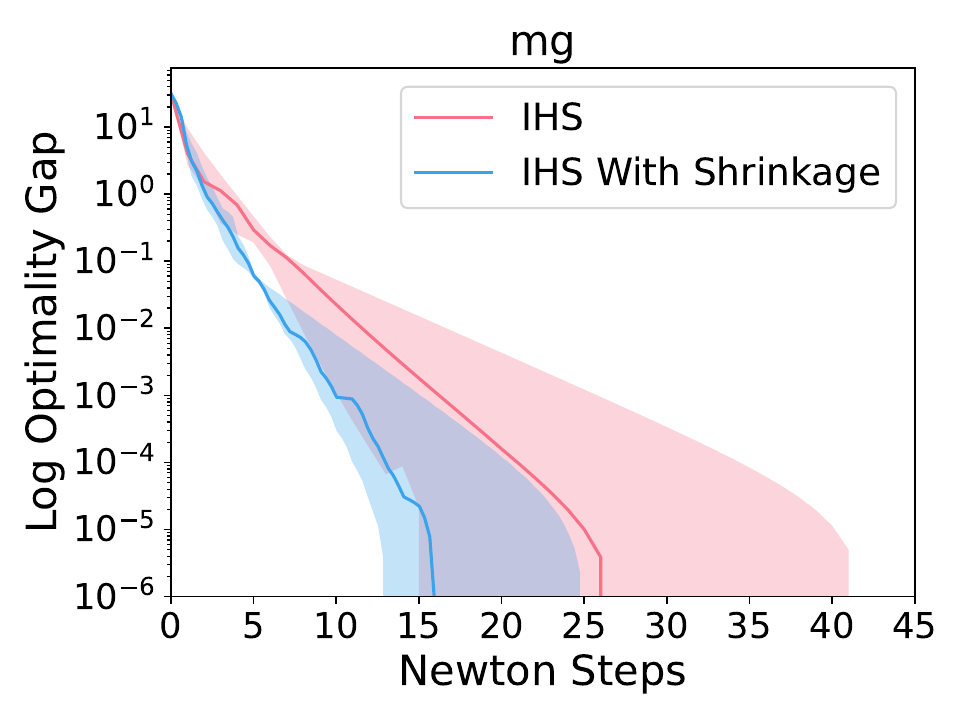}
\end{subfigure}
\hfill
\begin{subfigure}{0.23\linewidth}
\includegraphics[width=\linewidth]{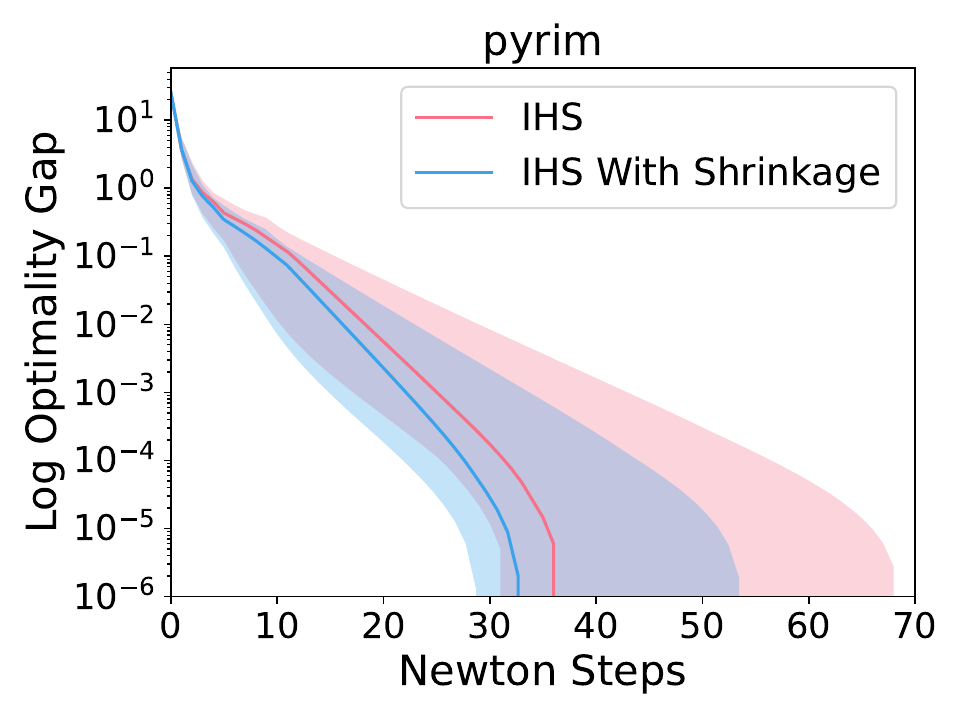}
\end{subfigure}
\hfill
\begin{subfigure}{0.23\linewidth}
\includegraphics[width=\linewidth]{ihs-mpg-16-www.pdf}
\end{subfigure}
\hfill
\begin{subfigure}{0.23\linewidth}
\includegraphics[width=\linewidth]{ihs-triazines-16-www.pdf}
\end{subfigure}
\caption{Real data experiments on Iterative Hessian Sketch method with heuristic shrinkage coefficient for ridge regression. Let $\lambda$ denote the regularizer and $m$ denote the sketch size. We pick $\lambda=0.001$ for abalone,triazines and $\lambda=0.01$ for bodyfat, eunite2001, housing, mg, pyrim, mpg . We pick $m=50$  for abalone, $m=100$ for bodyfat and pyrim, $m=300$ for eunite2001, housing, and triazines, $m=20$ for mg, and $m=30$ for  mpg.}\label{ihs-figure1}
\end{figure}

\begin{figure}[H]
\begin{subfigure}{0.23\linewidth}
\includegraphics[width=\linewidth]{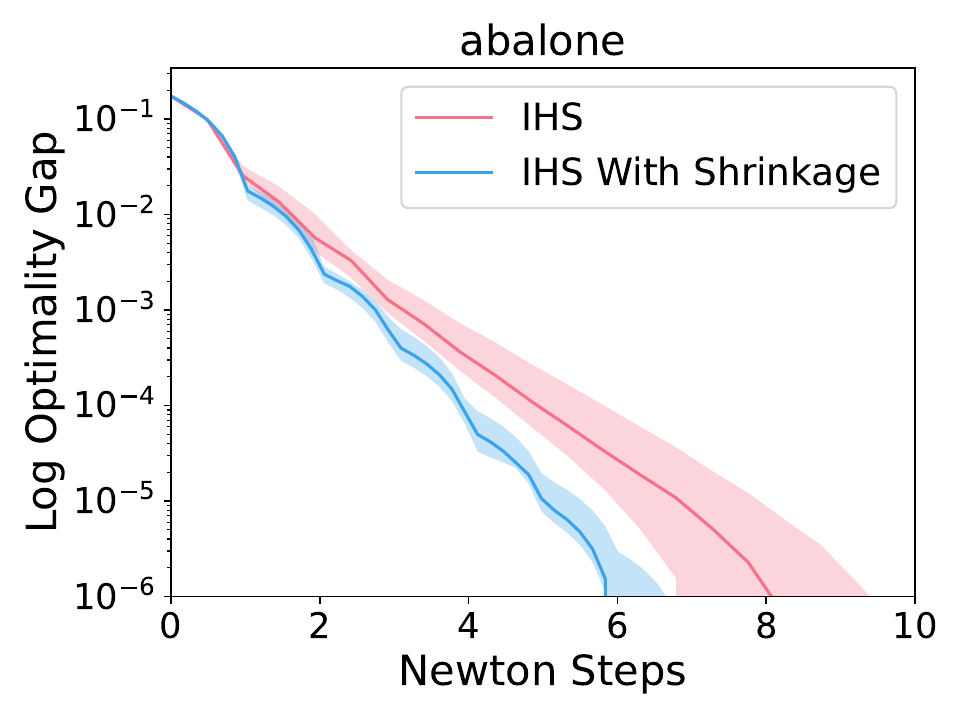}
\end{subfigure}
\hfill
\begin{subfigure}{0.23\linewidth}
\includegraphics[width=\linewidth]{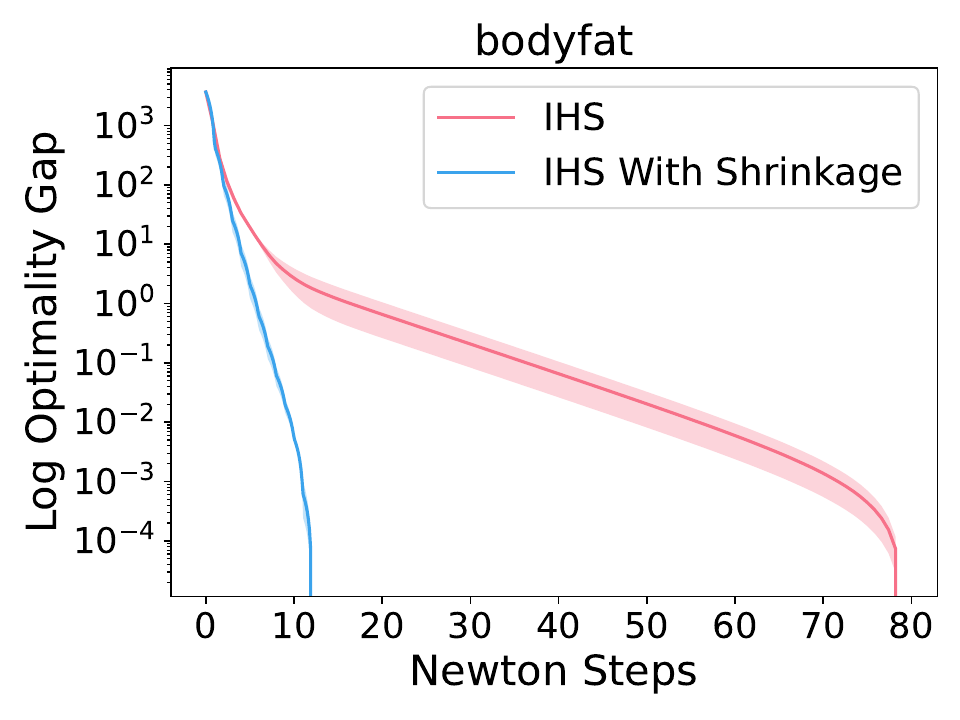}
\end{subfigure}
\hfill
\begin{subfigure}{0.23\linewidth}
\includegraphics[width=\linewidth]{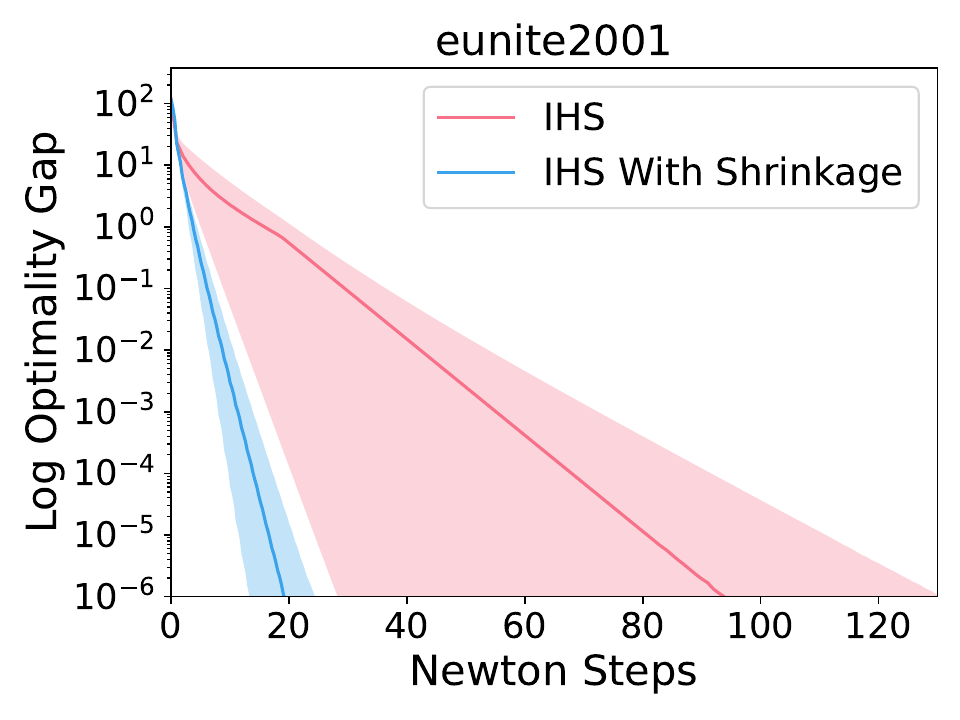}
\end{subfigure}
\hfill
\begin{subfigure}{0.23\linewidth}
\includegraphics[width=\linewidth]{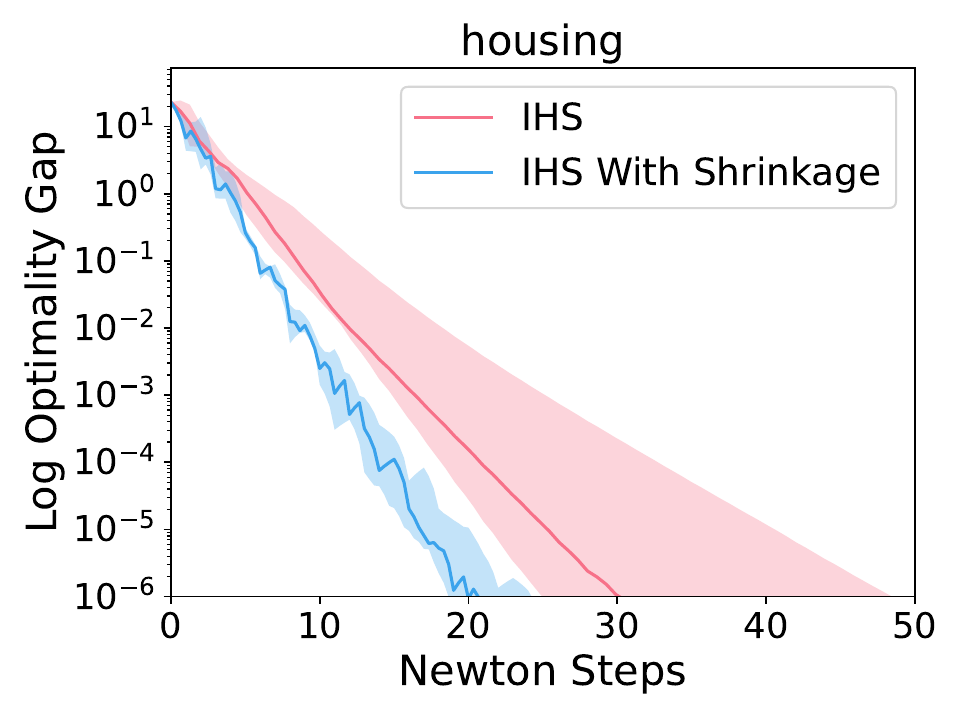}
\end{subfigure}

\begin{subfigure}{0.23\linewidth}
\includegraphics[width=\linewidth]{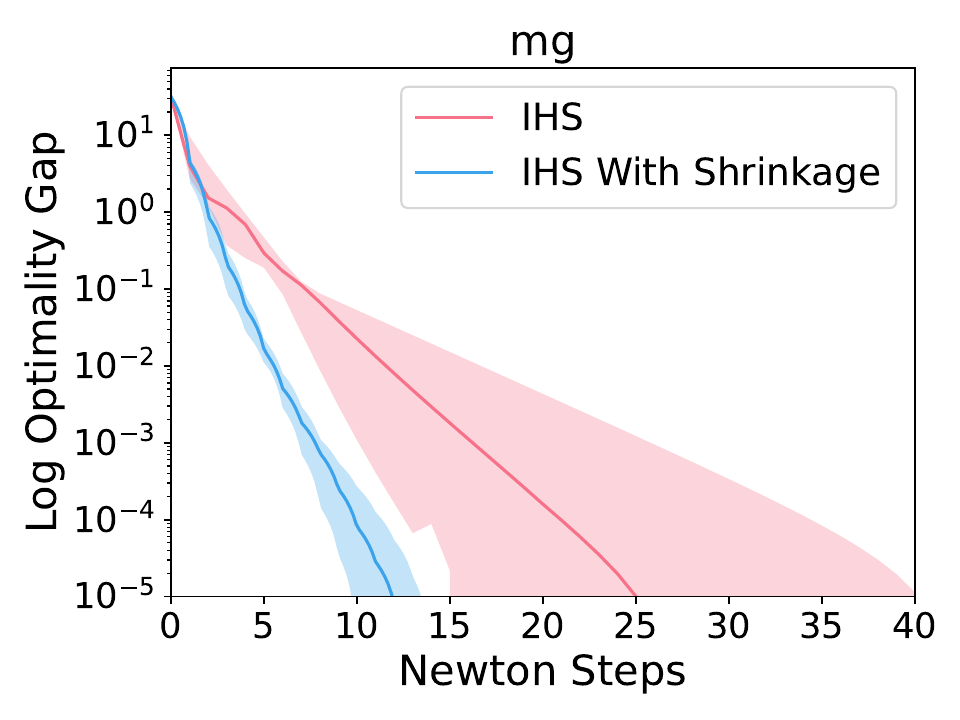}
\end{subfigure}
\hfill
\begin{subfigure}{0.23\linewidth}
\includegraphics[width=\linewidth]{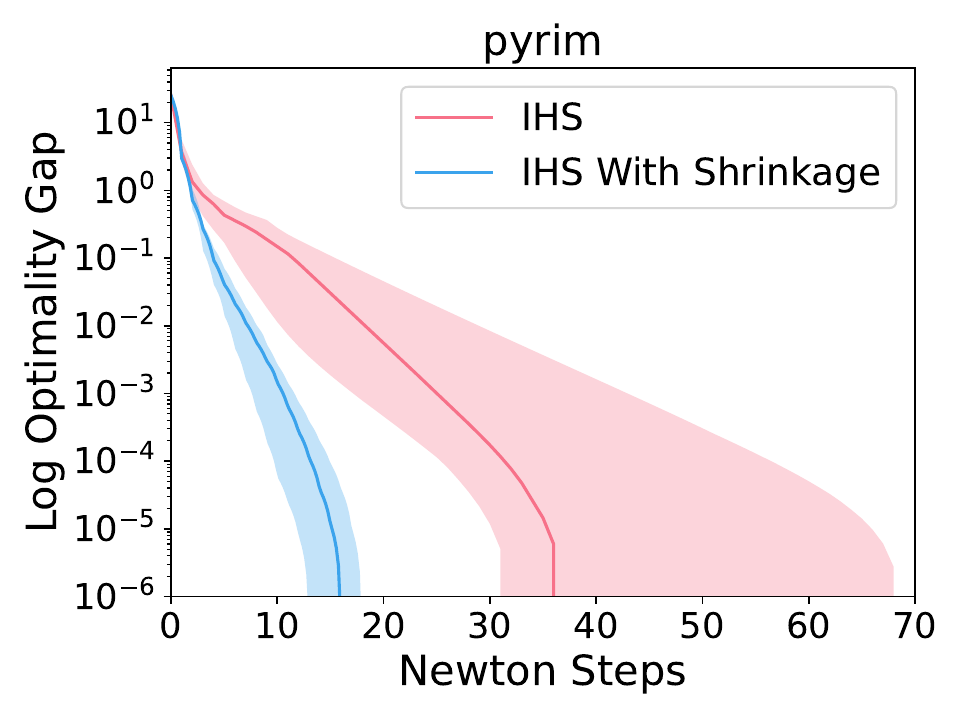}
\end{subfigure}
\hfill
\begin{subfigure}{0.23\linewidth}
\includegraphics[width=\linewidth]{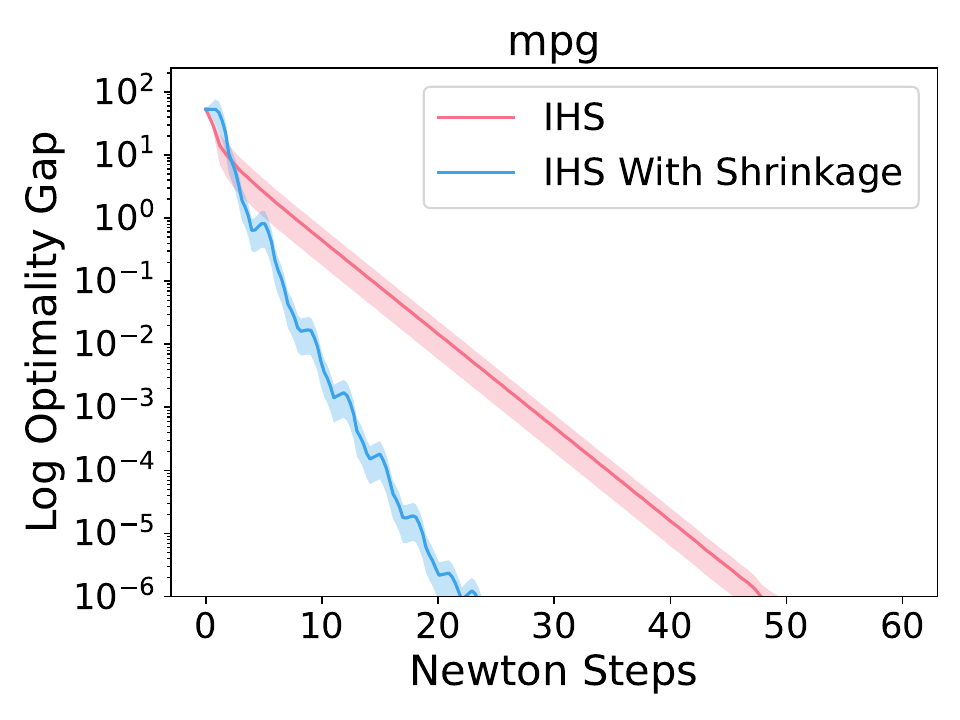}
\end{subfigure}
\hfill
\begin{subfigure}{0.23\linewidth}
\includegraphics[width=\linewidth]{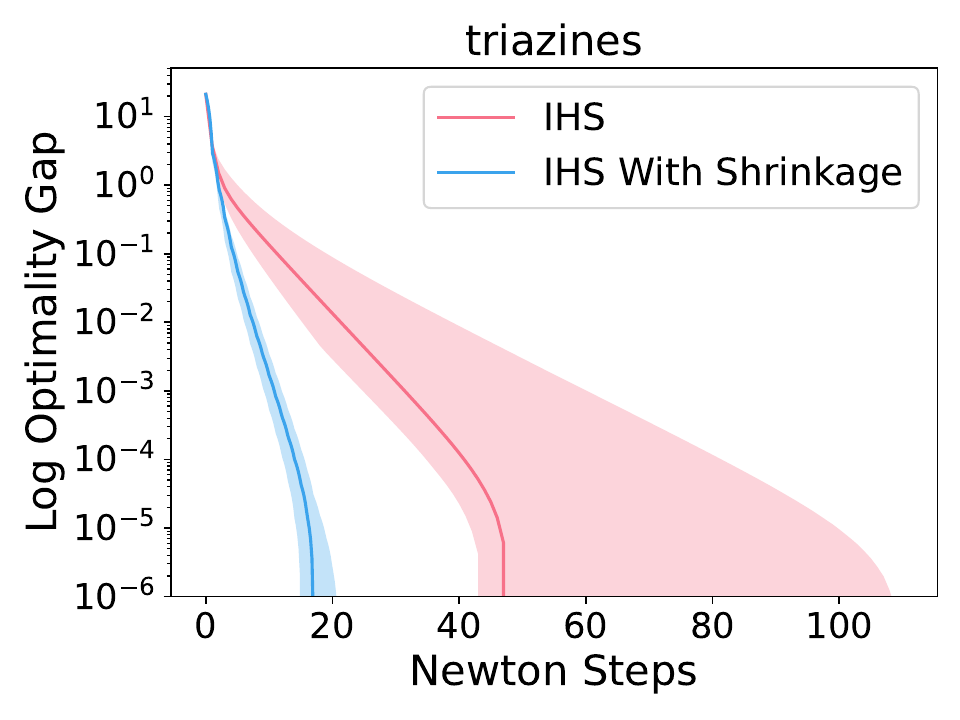}
\end{subfigure}
\caption{Real data experiments on Iterative Hessian Sketch method with exact shrinkage coefficient for ridge regression. Same parameter choice as in Figure \ref{ihs-figure1}.}\label{ihs-figure2}
\end{figure}





\end{document}